\newcommand{\D}{\mathcal{D}}              
\newcommand{\C}{\mathbb{C}}               
\newcommand{\R}{\mathbb{R}}               
\newcommand{\Z}{\mathbb{Z}}                
\newcommand{\N}{\mathbb{N}}                
\newcommand{\F}{\mathcal{F}}              
\renewcommand{\S}{\mathcal{S}}              
\renewcommand{\Re}{\mathrm{Re}\,}          
\renewcommand{\Im}{\mathrm{Im}\,}          
\renewcommand{\L}{\mathcal{L}}             
\newcommand{\one}{\mathbbm{1}}             
\renewcommand{\ker}{\mathcal{N}}
\newcommand{\ran}{\mathcal{R}}
\newcommand{\vek}[2]{\begin{pmatrix} #1 \\ #2 \end{pmatrix}}
\renewcommand{\u}{\mathbf{u}}
\renewcommand{\v}{\mathbf{v}}
\newcommand{\w}{\mathbf{w}}
\renewcommand{\F}{\mathcal{F}}
\newcommand{\G}{\mathcal{G}}
\renewcommand{\r}{\mathbf{r}}
\newcommand{\st}{\mathfrak{s}}
\renewcommand{\@secnumfont}{\bfseries}
  \def\section{\@startsection{section}{1}%
    \z@{.7\linespacing\@plus\linespacing}{.5\linespacing}%
    {\normalfont\LARGE\bfseries}}
\def\@seccntformat#1{%
  \protect\textup{%
    \protect\@secnumfont
    \expandafter\protect\csname format#1\endcsname 
    \csname the#1\endcsname
    \protect\@secnumpunct
  }%
}
\newcommand{\sect}
{
  \setcounter{equation}{0}
  \setcounter{figure}{0}
  \section
}
\theoremstyle{definition}
\newtheorem{definition}{Definition}[section]
\newtheorem{assumption}[definition]{Assumption}
\newtheorem{remark}[definition]{Remark}
\theoremstyle{plain}
\newtheorem{theorem}[definition]{Theorem}
\newtheorem{lemma}[definition]{Lemma}
\begin{document}
\title[Stability of Traveling Oscillating Fronts \\ in Complex Ginzburg Landau Equations]{Stability of Traveling Oscillating Fronts in Complex Ginzburg Landau Equations}
\setlength{\parindent}{0pt}
\vspace*{0.4cm}
\begin{center}
\normalfont\LARGE\bfseries{\shorttitle}
\vspace*{12pt}
\end{center}

\begin{center}
Wolf-J{\"u}rgen Beyn\footnotemark[1] and Christian D{\"o}ding{\footnotemark[2]${}^{,}$\footnotemark[3]} \\
\vspace{12pt}
October 25, 2021
\end{center}

\footnotetext[1]{Department of Mathematics, Bielefeld University, 33501 Bielefeld, Germany, \\ e-mail: \textcolor{blue}{beyn@math.uni-bielefeld.de}, phone: \textcolor{blue}{+49 (0)521 106 4798}.}
\footnotetext[2]{Department of Mathematics, Ruhr-University Bochum, 44801 Bochum, Germany, \\ e-mail: \textcolor{blue}{christian.doeding@rub.de}, phone: \textcolor{blue}{+49 (0)234 32 19876}.}
\footnotetext[3]{This work is an extended version of parts of the author's
  PhD Thesis \cite{Doeding}.}

\vspace{12pt}
\noindent
\begin{center}
\begin{minipage}{0.8\textwidth}
  {\small
    \textbf{Abstract.}
    Traveling oscillating fronts (TOFs) are specific waves of the form
    $U_\star (x,t) = e^{-i \omega t} V_\star(x - ct)$ with a profile $V_{\star}$
    which decays at $- \infty$ but approaches a nonzero limit at $+\infty$.
    TOFs usually appear in complex Ginzburg Landau equations of the type $U_t = \alpha U_{xx} + G(|U|^2)U$. In this paper we prove a theorem on 
    the asymptotic stability of TOFs, where we allow the
    initial perturbation to be the  sum of an exponentially localized part  and
    a front-like part which approaches a small but nonzero limit at $+ \infty$.
    The underlying assumptions guarantee that
    the operator, obtained from linearizing about the TOF in  a co-moving and
    co-rotating frame, has essential spectrum touching the imaginary axis
    in a quadratic fashion and that further isolated eigenvalues are bounded away
    from the imaginary axis. The basic idea of the proof is to consider
    the problem in an extended phase space which couples the wave dynamics
    on the real line to the ODE dynamics at $+ \infty$. Using slowly
    decaying exponential weights, the framework allows to derive appropriate
     resolvent estimates, semigroup techniques, and Gronwall estimates.
  }
\end{minipage}
\end{center}

\vspace{12pt}
\noindent
\textbf{Key words.} Traveling oscillating front, nonlinear stability,
Ginzburg Landau equation, equivariance, essential spectrum.

\vspace{12pt}
\noindent
\textbf{AMS subject classification.}  35B35, 35B40, 35C07, 35K58, 35Pxx, 35Q56

\sect{Introduction}
\label{sec1}
In this paper we consider complex-valued semilinear parabolic equations of the form
\begin{align} \label{Evo}
	U_t = \alpha U_{xx} + G(|U|^2)U, \quad x \in \R,\, t \ge 0
\end{align}
with nonlinearity $G:\R \rightarrow \C$ and diffusion coefficient $\alpha \in \C$, $\Re \alpha > 0$. If the nonlinearity $G$ is a linear resp. a quadratic polynomial over $\C$ then \eqref{Evo} leads to the cubic resp. the quintic complex Ginzburg Landau equation. Evolution equations of the form \eqref{Evo} admit the propagation of various types of waves which oscillate in time and which either
have a front profile or which are periodic in space like
wave trains, see \cite{SandstedeScheel04}, \cite{Saarloos}. We are interested in the stability behavior
of a special class of solutions which we call traveling oscillating fronts (TOFs).
A TOF is a solution of \eqref{Evo} of the form
\begin{align*}
	U_\star (x,t) = e^{-i \omega t} V_\star(x - ct)
\end{align*}
with a profile $V_\star: \R \rightarrow \C$ satisfying the asymptotic property
\begin{align*}
	\lim_{\xi \rightarrow -\infty} V_\star(\xi) = 0, \quad \lim_{\xi \rightarrow +\infty} V_\star(\xi) = V_{\infty}
\end{align*}
for some $V_{\infty} \in \C$, $V_{\infty} \neq 0$. The parameters $\omega,c \in \R$ are called the frequency and the velocity of the TOF.

Figure \ref{ExampleTOF}  shows a  typical TOF obtained by
simulating the quintic complex Ginzburg-Landau equation
\begin{align} \tag{QCGL} \label{QCGL}
  U_t = \alpha U_{xx} + \beta_1 U + \beta_3 |U|^2 U + \beta_5 |U|^4U, \quad x \in \R,\, t \ge 0
  \end{align}
with an initial function $U(\cdot,0)$ of sigmoidal shape.
\begin{figure}[h!]
\centering
\begin{minipage}[t]{0.45\textwidth}
\centering
\includegraphics[scale=0.45]{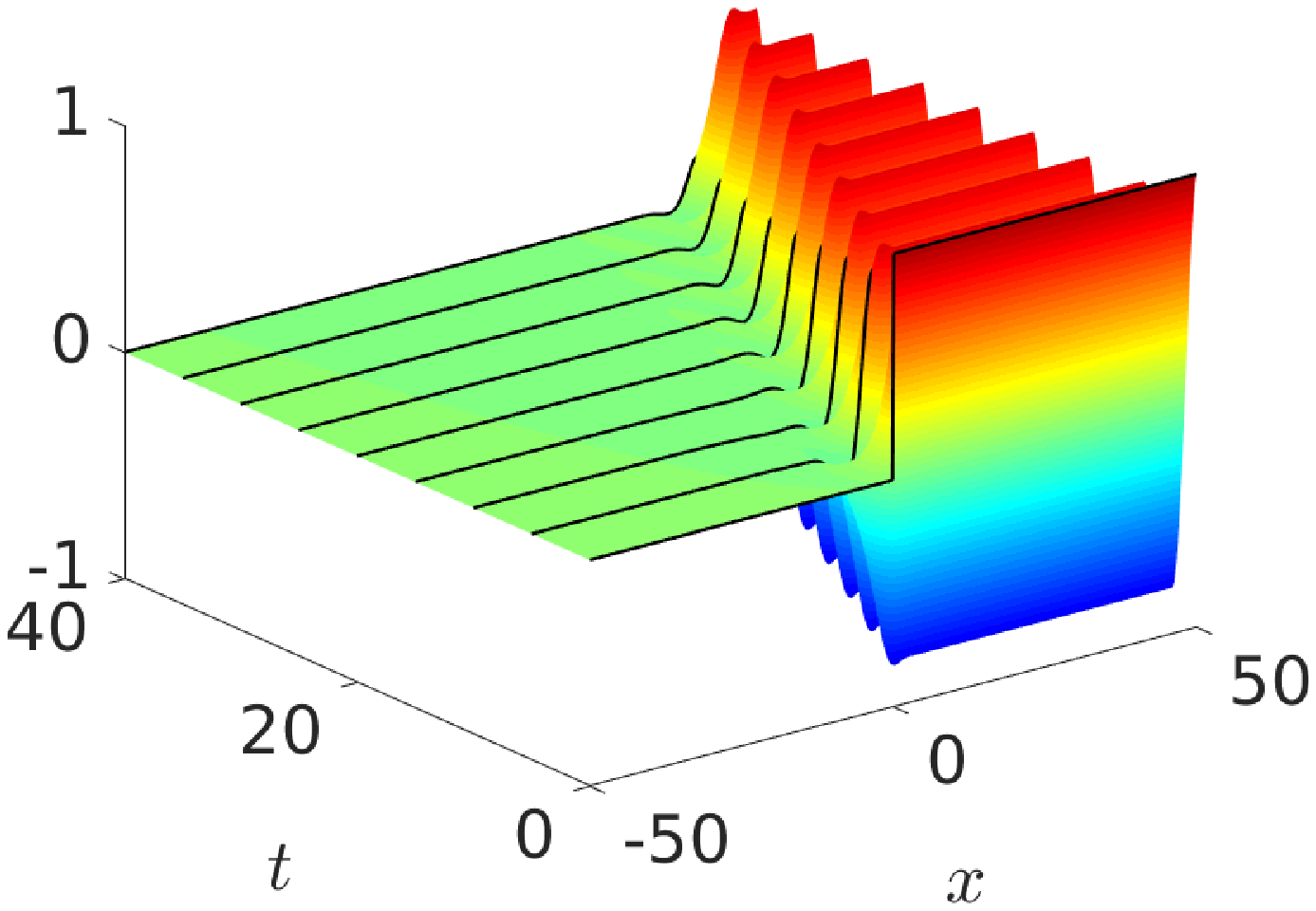}
\end{minipage}
\begin{minipage}[t]{0.45\textwidth}
\centering
\includegraphics[scale=0.45]{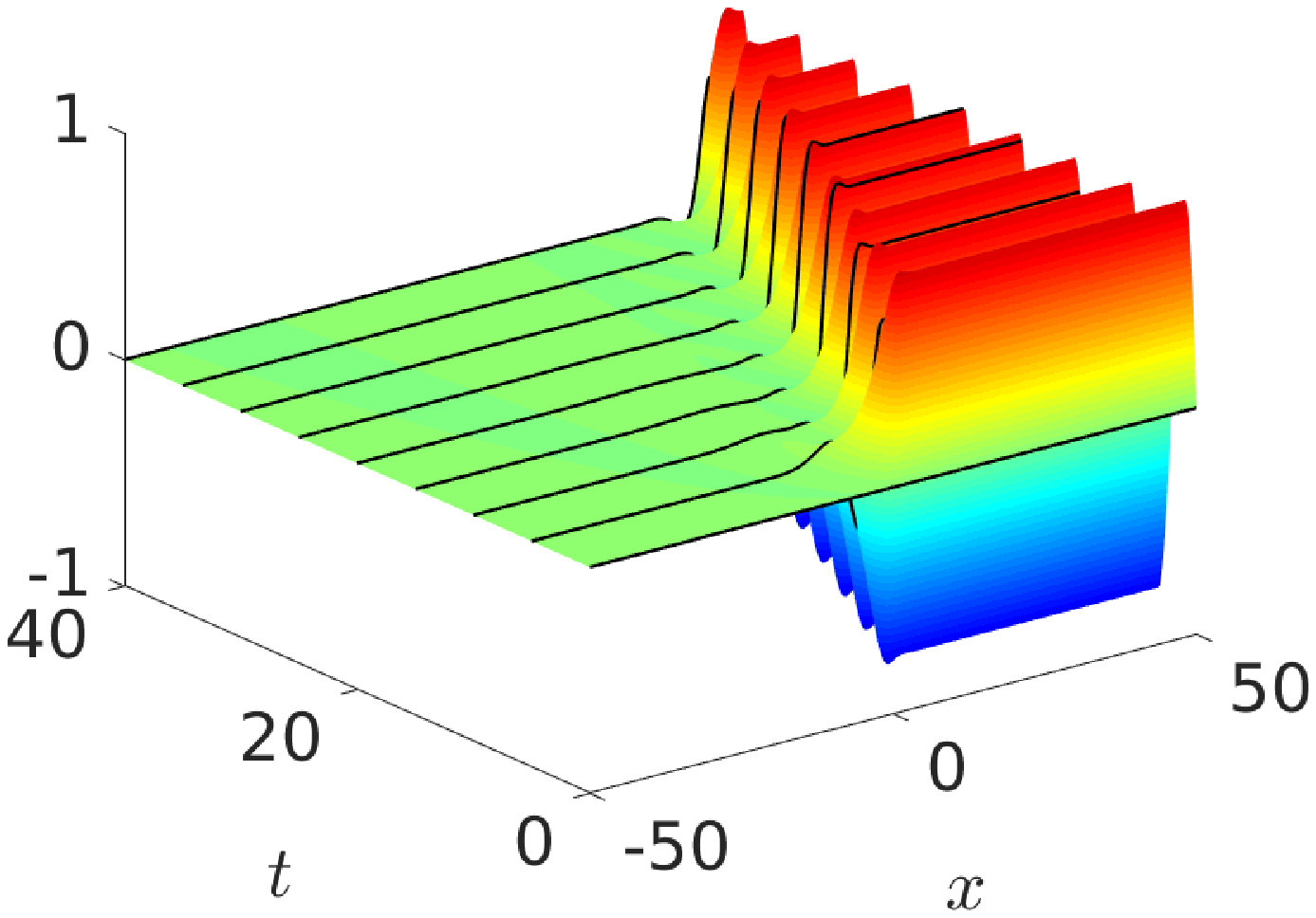}
\end{minipage}

\begin{minipage}[t]{0.45\textwidth}
\centering
\includegraphics[scale=0.45]{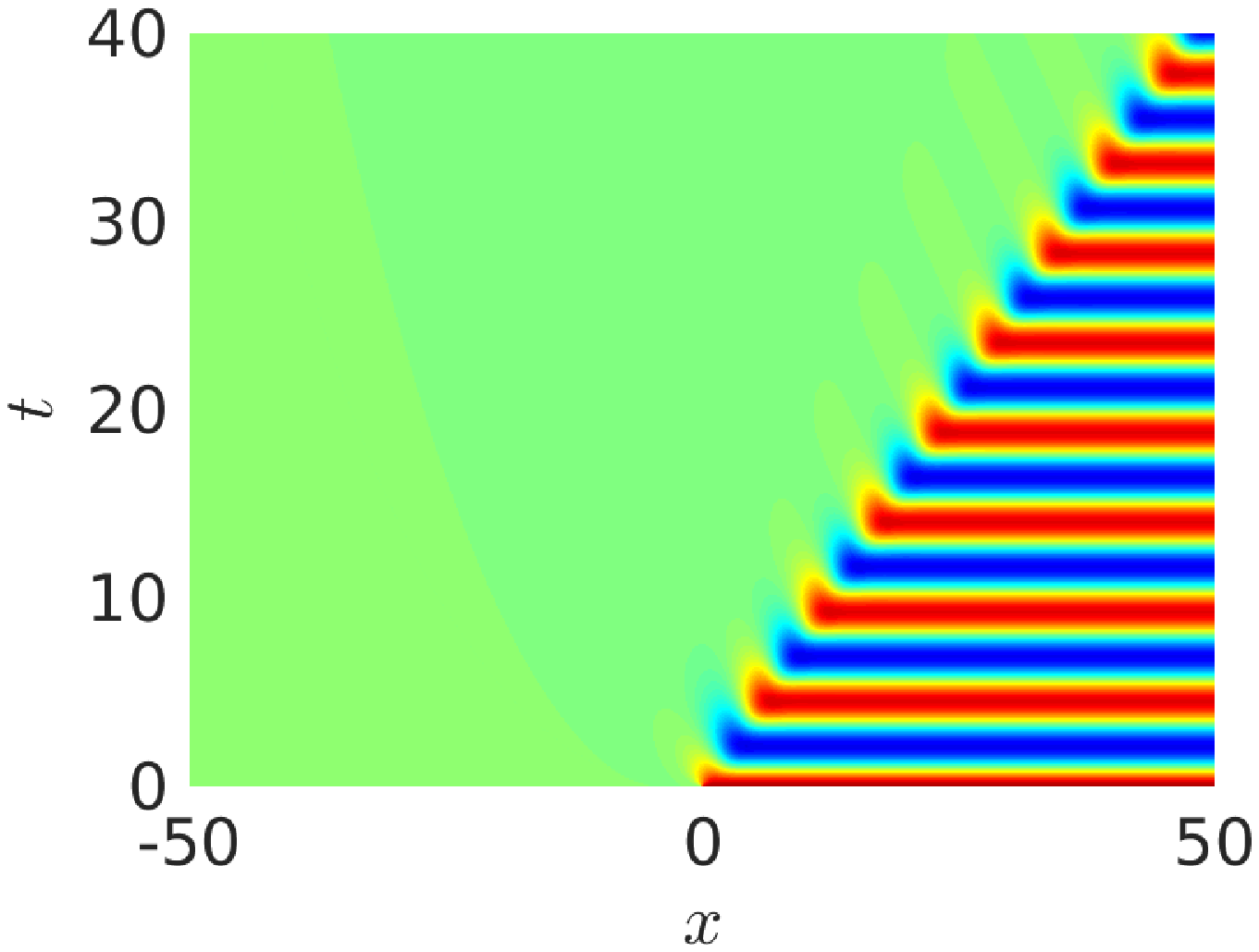}
\end{minipage}
\begin{minipage}[t]{0.45\textwidth}
\centering
\includegraphics[scale=0.45]{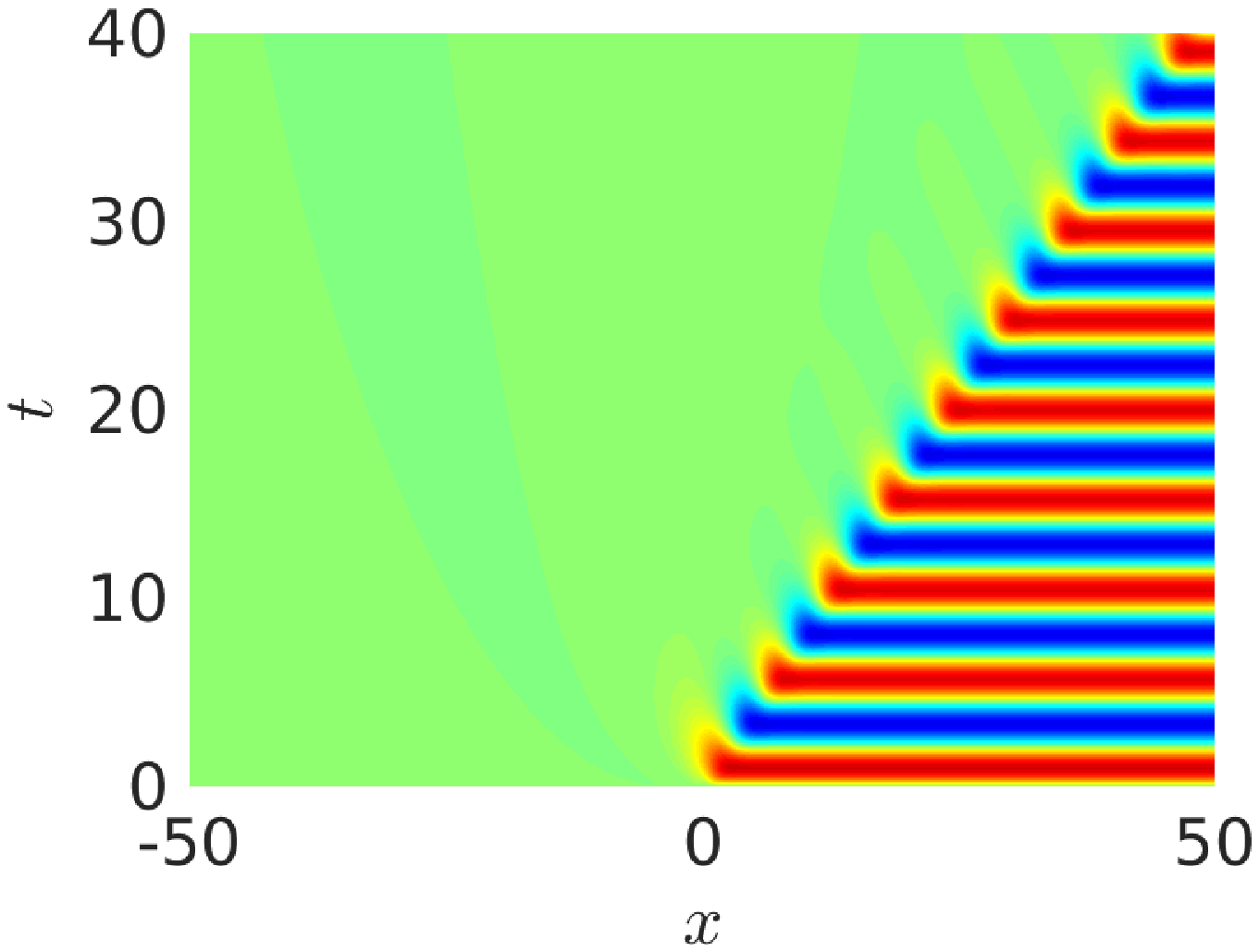}
\end{minipage}
\caption{Numerical simulation of a TOF in \eqref{QCGL} with parameters $\alpha = 1 + \tfrac{i}{2}$, $\beta_3 = 1 + i$, $\beta_5 = -1 + i$ and $\beta_1 = -0.1$. Real part (left) and imaginary part (right).} \label{ExampleTOF}
\end{figure}

We aim at sufficient conditions under which a TOF is
nonlinearly stable with asymptotic phase in suitable function spaces.
As initial perturbations we allow functions which can be decomposed
into an exponentially localized part and a front-like part which
perturbs the limit at $+ \infty$. There are two main difficulties
to overcome: first, the operator obtained by linearizing about the TOF has
essential spectrum touching the imaginary axis at zero in a quadratic
way. Second, the perturbation at infinity prevents the use of standard
Sobolev spaces for the linearized operator. The first difficulty will
be overcome by exponential weights which shift the essential spectrum
to the left, while the second difficulty is handled by analyzing stability
in an extended phase space which couples the dynamics on the real line
to the dynamics at $+ \infty$.

In the following we give a more technical outline of the setting
and our basic assumptions, and we provide an overview of the following
sections.
Our results will be stated for the two-dimensional real-valued system equivalent to \eqref{Evo}. Setting $U =u_1 + iu_2$, $u_j(x,t) \in \R$, $\alpha = \alpha_1 + i \alpha_2$, $\alpha_j \in \R$ and $G = g_1 + ig_2$ with $g_j:\R \rightarrow \R$ the equivalent real-valued parabolic system reads
\begin{align} \label{rEvo}
	u_t = Au_{xx} + f(u), \quad x \in \R,\, t \ge 0,
\end{align} 
where
\begin{equation} \label{SystemDef}
	A = 
	\begin{pmatrix}
		\alpha_1 & -\alpha_2 \\
		\alpha_2 & \alpha_1
	\end{pmatrix}, 
	\quad f(u) = g(|u|^2)u, \quad g(\cdot) = 
	\begin{pmatrix}
		g_1(\cdot) & -g_2(\cdot) \\
		g_2(\cdot) & g_1(\cdot)
	\end{pmatrix}.
\end{equation}
A TOF $U_{\star}=u_{\star,1}+ i u_{\star,2}$  of \eqref{Evo} then corresponds to
a solution $u_\star=(u_{\star,1},u_{\star,2})^{\top}$ of \eqref{rEvo} of the form
\begin{align*}
  u_\star(x,t) = R_{-\omega t} v_\star(x-ct),\quad
  R_{\theta}=\begin{pmatrix} \cos \theta & -\sin \theta \\
\sin \theta & \cos \theta \end{pmatrix},
\end{align*}
where $R_{\theta}$ denotes rotation by the angle $\theta \in \R$.  The profile
$v_\star: \R \rightarrow \R^2$ satisfies $V_{\star}=v_{\star,1}+i v_{\star,2}$ and
\begin{align} \label{rasymp}
	\lim_{\xi \rightarrow -\infty} v_\star (\xi) = 0, \quad \lim_{\xi \rightarrow +\infty} v_\star (\xi) = v_\infty,
\end{align}
where $V_{\infty}=v_{\infty,1}+ i v_{\infty,2}$ and $v_{\infty}\in \R^2$, $v_{\infty} \neq 0$. The vector $v_{\infty}$ is called the asymptotic rest-state
and the specific solution $R_{-\omega t}v_{\infty}$ of \eqref{rEvo}
is called a bound state.
Figure \ref{Figure:TOF3D}
shows a typical profile of a TOF in $(x,u_1,u_2)$-space.

\begin{figure}[h!]
\centering
\begin{minipage}[t]{0.9\textwidth}
\centering
\includegraphics[scale=1]{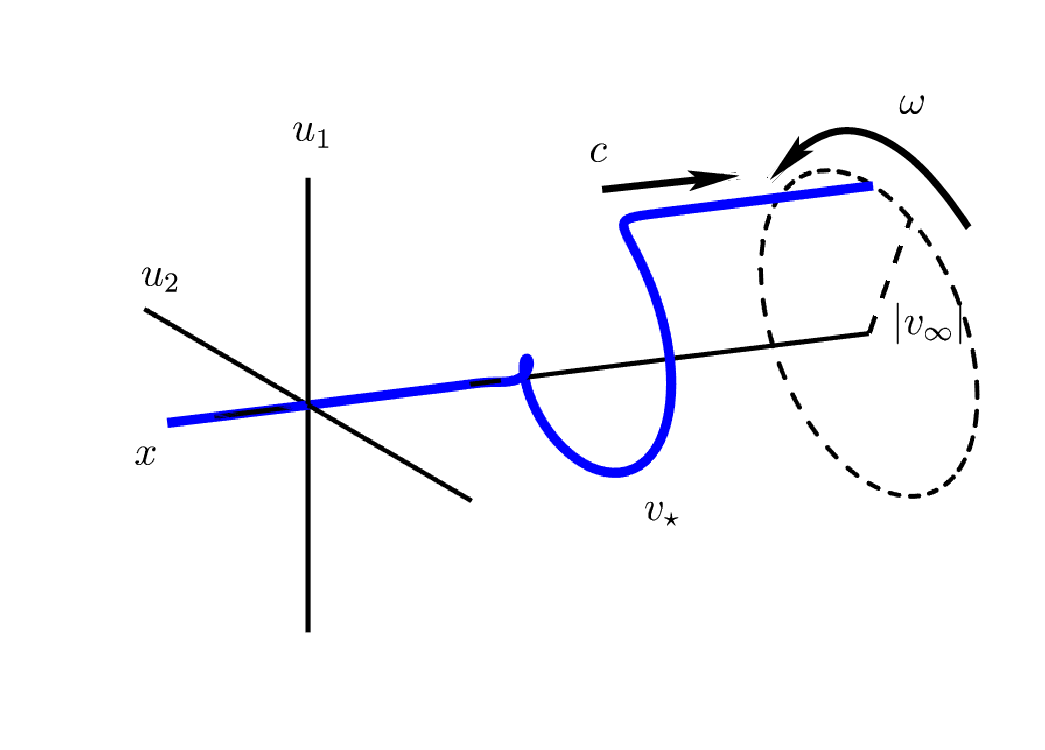}
\caption{Traveling oscillating front.} \label{Figure:TOF3D}
\end{minipage}
\end{figure}

For the stability analysis it is natural to transform \eqref{rEvo} into a co-moving and co-rotating frame, i.e. we set $u(x,t) = R_{-\omega t} v(\xi,t)$, $\xi = x-ct$ and find that
$v$ solves the equation
\begin{align} \label{comovsys}
  v_t = A v_{\xi\xi} + cv_\xi + S_\omega v + f(v), \quad \xi \in \R,\, t \ge 0, \quad
  S_{\omega} := \begin{pmatrix} 0 & -\omega \\ \omega & 0 \end{pmatrix}.
\end{align} 
 The time-independent profile $v_\star$ becomes a stationary solution of \eqref{comovsys}, i.e. it solves the ODE
\begin{align} \label{statcomovsys}
	0 = A v_{xx} + c v_x + S_\omega v + f(v), \quad x \in \R.
\end{align}
From the asymptotic property \eqref{rasymp} one concludes
(see Lemma \ref{lem:asym}) that the rest-state satisfies
\begin{equation*}
	g(|v_\infty|^2) = -S_\omega, \quad \lim_{x \rightarrow \pm \infty} v_{\star}'(x) = 0, \quad \lim_{x \rightarrow \pm \infty} v_{\star}''(x) = 0.
\end{equation*}

Since the right-hand side of  \eqref{comovsys} is equivariant with respect
to translations and multiplication by rotations, the TOFs always come in families, i.e. the system \eqref{comovsys} has a two-dimensional continuum
\begin{equation} \label{relequi}
  \mathcal{O}(v_\star) := \{ R_\theta v_\star(\cdot - \tau): (\theta,\tau) \in S^1 \times \R\}, \quad S^1=\R / 2 \pi \Z
\end{equation}
of stationary solutions. In the language of abstract evolution equations this
is a relative equilibrium, see e.g. \cite{Lauterbach}, \cite{Fiedleretal}, \cite{KapitulaPromislow}.
In the following we study the long time behavior of the solution $v$ of the initial-value problem
\begin{align} \label{perturbsys}
	v_t = Av_{xx} + cv_x + S_\omega v + f(v), \quad v(0) = v_\star + u_0,
\end{align}  
where  the initial perturbation $u_0$ is assumed to be small in a suitable sense.

Let us first state some basic assumptions on the system \eqref{rEvo}, \eqref{SystemDef}.

\begin{assumption} \label{A1}
The coefficient $\alpha$ and the function $g$ satisfy
\begin{align}
	& \alpha_1 > 0, \quad  g  \in C^3(\R,\R^{2,2}), \quad g_1(0)  < 0.  \label{A1a}
\end{align}
\end{assumption}
\begin{assumption} \label{A2}
  
    There exists a traveling oscillating front solution $u_\star$ of \eqref{rEvo} with profile $v_\star \in C^2_b (\R,\R^2)$, speed $c > 0$, frequency $\omega \in \R$ and asymptotic rest-state $v_\infty = (|v_\infty|,0)^\top \in \R^2$ such that
\begin{align*}
	g_1'(|v_\infty|^2) < 0.
\end{align*}
\end{assumption}
Note that the special form of $v_{\infty}$ selects just a specific equilibrium
from the orbit \eqref{relequi}.
Both assumptions guarantee that we have a stable equilibrium at
$\xi=-\infty$ and a circle of stable equilibria at $\xi=\infty$ when spatial
derivatives are ignored in \eqref{comovsys}.
Further conditions will
be imposed on the spectrum of the linearized operator
\begin{align} \label{LatL}
	Lv = Av_{xx} + cv_x + S_\omega v + Df(v_\star) v
\end{align}
in suitable function spaces.
In view of \eqref{relequi} we expect the linearization $L$ from \eqref{LatL}
to have a two-dimensional kernel. In addition, it turns out that the essential
spectrum of $L$,  touches the imaginary axis at the origin when considered in the function space $L^2(\R,\R^2)$. Thus there is no spectral gap between the
zero eigenvalue und the remaining spectrum, so that standard approaches to conclude nonlinear stability do not apply; see \cite{Henry},
\cite{KapitulaPromislow}, \cite{Sandstede02}.

We overcome this problem by two devices.
First, we impose the following condition 
\begin{assumption}(Spectral Condition) \label{A4}
The  diffusion coefficient $\alpha=\alpha_1+i \alpha_2$ satisfies
\begin{align*}
	\alpha_2 g_2'(|v_\infty|^2) + \alpha_1 g_1'(|v_\infty|^2) < 0.
\end{align*}
\end{assumption}
Note that Assumption \ref{A4} follows from Assumptions \ref{A1}, \ref{A2}
if $\alpha_2=0$. Moreover, we will show that Assumption \ref{A4} guarantees
 the essential spectrum to have negative quadratic contact
with the imaginay axis.
Second, we use Lebesgue and Sobolev spaces with exponential weight
\begin{align} \label{eta}
	\eta(x) = e^{\mu \sqrt{x^2 + 1}}, \quad \mu \ge 0.
\end{align}
Any sufficiently small  $\mu > 0$ will be enough to shift the essential
spectrum to the left and allow for a stability result.
Weights of this or similar type frequently appear in stability analyses,
see e.g. \cite{Zelik}, \cite[Ch.3.1.1]{KapitulaPromislow}, \cite{ghazaryan},
\cite{Kapitula94}. We note that the stability statement w.r.t. the $L^{\infty}$-norm  in the
second part of \cite[Theorem 7.2]{Kapitula94} comes closest to our
results. There a perturbation argument for the case of a positive definite
matrix $A$ in \eqref{comovsys} is employed and  the resulting Evans
function \cite{Alexander} is analyzed. This leads to more restrictive
conditions on the coefficients of the system and on the initial data.

We finish the introduction with a brief outline of the contents of
the following sections. In section \ref{sec2} we complete the basic
assumptions \ref{A1}, \ref{A2}, \ref{A4} by eigenvalue conditions
for the operator \eqref{LatL} and we state our main results in more
technical terms. The approach  of the profile towards
its rest states is shown to be exponential, and stability with asymptotic phase
is stated in weighted $H^1$-spaces. We also explain the main idea of
the proof which incorporates the dynamics of \eqref{comovsys} at $\xi = \infty$
into an extended evolutionary system, see \eqref{CP}, \eqref{F}.
In Section \ref{sec3} we discuss in detail
the Fredholm properties of the operator $L$ and its extended version
in weighted spaces and we derive resolvent estimates.
These form the basis for obtaining detailed estimates of the associated
(extended) semigroup in Section \ref{sec4}. The subsequent section \ref{sec5}
is devoted to the decomposition of the dynamics into the motion within
the underlying two-dimensional symmetry group and within a
codimension-two function space. Section \ref{sec6} then provides sharp
estimates for the resulting remainder terms. Then a local existence theorem and
a Gronwall estimate complete the proof in Section \ref{sec7}.

Let us finally mention that the techniques of this paper can be used
to prove that the general method of freezing
(\cite{BeynThummler04}, \cite{Rottmann-Matthes12}, \cite{BOR14})
works successfully for the parabolic equation \eqref{rEvo} with two underlying symmetries, see \cite{Doeding}.

\sect{Assumptions and main results}
\label{sec2}
As a preparation for the subsequent analysis we specify the approach of
a TOF towards its rest states.
 \begin{lemma} \label{lem:asym}
 Let $v_\star \in C^2_b(\R,\R^2)$ be the profile of a traveling oscillating front of \eqref{rEvo} with speed $c>0$, frequency $\omega \in \R$ and asymptotic rest-state $v_\infty \in \R^2\backslash \{ 0 \}$. Moreover, suppose $\Re \alpha > 0$ and $g  \in C(\R,\R^{2,2})$. Then the following holds:
 \begin{align*}
	g(|v_\infty|^2) = -S_\omega, \quad \lim_{x \rightarrow \pm \infty} v_{\star}'(x) = 0, \quad \lim_{x \rightarrow \pm \infty} v_{\star}''(x) = 0.
 \end{align*}
 \end{lemma}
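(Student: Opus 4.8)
The plan is to treat the profile equation \eqref{statcomovsys} as a second-order ODE and to obtain all three conclusions by passing to the limit $x\to\pm\infty$ in that equation, once we know that $v_\star'$ tends to zero. First I would observe that $\Re\alpha>0$ gives $\det A=\alpha_1^2+\alpha_2^2>0$, so $A$ is invertible and \eqref{statcomovsys} is equivalent to
\begin{align*}
 v_\star''(x)=-A^{-1}\bigl(c\,v_\star'(x)+S_\omega v_\star(x)+f(v_\star(x))\bigr),\qquad x\in\R.
\end{align*}
The virtue of this form is that $v_\star''$ is expressed through $v_\star$ and $v_\star'$ alone, so mere continuity of $g$ (hence of $f$) is all the regularity one needs; the equation is never differentiated.

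Next, since $v_\star\in C^2_b(\R,\R^2)$ the derivative $v_\star'$ is globally Lipschitz, hence uniformly continuous, while $v_\star$ has finite limits at $\pm\infty$ by \eqref{rasymp}. A Barbalat-type argument — if $h\in C^1(\R,\R)$ converges as $x\to+\infty$ (resp. $-\infty$) and $h'$ is uniformly continuous, then $h'(x)\to0$ — applied componentwise yields $\lim_{x\to\pm\infty}v_\star'(x)=0$. Feeding this back into the rewritten equation, and using continuity of $g$ together with $v_\star(x)\to v_\infty$, so that $f(v_\star(x))=g(|v_\star(x)|^2)v_\star(x)\to g(|v_\infty|^2)v_\infty$ as $x\to+\infty$, shows that $v_\star''$ also possesses a limit at $+\infty$, namely $-A^{-1}\bigl(S_\omega v_\infty+g(|v_\infty|^2)v_\infty\bigr)$; at $-\infty$ the same reasoning gives limit $0$ because $f(0)=g(0)\cdot0=0$. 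Finally, a $C^1$ function whose value has a finite limit and whose derivative also has a limit must have that derivative-limit equal to zero (integrate $v_\star'$ over unit intervals $[x,x+1]$ and let $x\to\pm\infty$). Applying this to each component of $v_\star'$ forces $\lim_{x\to\pm\infty}v_\star''(x)=0$, and at the same time $\bigl(g(|v_\infty|^2)+S_\omega\bigr)v_\infty=0$.

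It then remains to upgrade this last relation to $g(|v_\infty|^2)=-S_\omega$, and here I would use the special structure in \eqref{SystemDef}: the matrix $g(|v_\infty|^2)+S_\omega$ has the form $\begin{pmatrix} a & -b\\ b & a\end{pmatrix}$ with $a=g_1(|v_\infty|^2)$ and $b=g_2(|v_\infty|^2)+\omega$, so its determinant equals $a^2+b^2$. Since it annihilates the nonzero vector $v_\infty$ it is singular, which forces $a=b=0$, i.e. $g_1(|v_\infty|^2)=0$ and $g_2(|v_\infty|^2)=-\omega$; this is precisely $g(|v_\infty|^2)=-S_\omega$.

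The argument is essentially routine, and I do not expect a genuine obstacle. The only places needing a little care are the two elementary facts about limits of derivatives of convergent functions used above, and the observation that the final identity $g(|v_\infty|^2)=-S_\omega$ — rather than merely $\bigl(g(|v_\infty|^2)+S_\omega\bigr)v_\infty=0$ — really relies both on $v_\infty\neq0$ and on the rotation-commuting (complex-linear) form of $g$ and $S_\omega$.
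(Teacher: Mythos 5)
Your argument is correct, and it is genuinely different from the paper's. You apply Barbalat's lemma twice: once to deduce $v_\star'\to 0$ from the existence of the limits of $v_\star$ and the uniform continuity of $v_\star'$ (a consequence of $v_\star\in C^2_b$), and once more (in the integrated form over unit intervals) to conclude that the limit of $v_\star''$, whose existence you read off from the equation after inverting $A$, must in fact vanish. The paper instead proves and invokes a dedicated result (Lemma \ref{lemmaA1}) about bounded solutions of $L_0 u = A u'' + c u' = r$, built on an explicit Green's-function representation that exploits the sign of $c$ and the positivity of $\Re\sigma(A)$. The trade-off: your Barbalat route is more elementary, does not need $c>0$, and only uses invertibility of $A$, but it crucially relies on $v_\star$ having limits at $\pm\infty$ (guaranteed here by the TOF definition \eqref{rasymp}). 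The paper's Lemma \ref{lemmaA1} is stronger in that it requires only boundedness of $v$ rather than convergence, and its Green's-function machinery is reused when deriving the exponential decay rates in Theorem \ref{decay}. Both approaches conclude identically with the algebraic step: $(S_\omega + g(|v_\infty|^2))v_\infty=0$ with $v_\infty\neq 0$ forces the rotation-type matrix $S_\omega+g(|v_\infty|^2)$, whose determinant is a sum of two squares, to vanish.
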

Using Assumption \ref{A1} and \ref{A2} one can conclude that the convergence in Lemma \ref{lem:asym} of the profile $v_\star$ and its derivatives is exponentially fast.
\begin{theorem} \label{decay}
  Let Assumption \ref{A1} and \ref{A2} be satisfied and let $v_{\star}$ be given
  as in Lemma \ref{lem:asym}. Then $v_\star \in C^5_b(\R,\R^2)$ holds and
  there are constants $K,\mu_\star > 0$ such that
\begin{align*}
	| v_\star(x) - v_\infty | + |v'_\star(x)| + |v''_\star(x)| + |v'''_\star(x)| & \le K e^{-\mu_\star x} \quad \forall\, x \ge 0, \\
	| v_ \star(x) | + |v'_\star(x)| + |v''_\star(x)| + |v'''_\star(x)| & \le K e^{\mu_\star x} \quad \forall\, x \le 0.
\end{align*}
\end{theorem}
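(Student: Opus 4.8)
The plan is to recast the profile equation \eqref{statcomovsys} as a first-order ODE on $\R^4$ and to read off the exponential behaviour of $v_\star$ near its two end states from stable, unstable and centre manifold theory; the additional regularity and the decay of the higher derivatives then follow by bootstrapping in the equation. Since $\det A=\alpha_1^2+\alpha_2^2>0$ by Assumption \ref{A1}, $A$ is invertible and, with $w=(v_\star,v_\star')^\top$, equation \eqref{statcomovsys} becomes $w'=\mathcal{F}(w)$ where $\mathcal{F}(v,p)=\bigl(p,\,-A^{-1}(cp+S_\omega v+f(v))\bigr)^\top$. By Lemma \ref{lem:asym} together with $g(|v_\infty|^2)=-S_\omega$, the points $w_-:=0$ and $w_+:=(v_\infty,0)^\top$ are equilibria of this system and $v_\star$ is a heteroclinic orbit with $w(x)\to w_\mp$ as $x\to\mp\infty$.

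A number $\lambda$ is an eigenvalue of $M_\pm:=D\mathcal{F}(w_\pm)$ iff $\det\bigl(\lambda^2A+c\lambda I+S_\omega+Df(v_\pm)\bigr)=0$ with $v_-=0$, $v_+=v_\infty$. For $M_-$ one has $Df(0)=g(0)$, and since $A$, $S_\omega$ and $g(0)$ all represent multiplication by a complex scalar on $\R^2\cong\C$, this determinant equals $|\alpha\lambda^2+c\lambda+g_1(0)+i(g_2(0)+\omega)|^2$; a purely imaginary root $\lambda=i\rho$ would force $-\alpha_1\rho^2+g_1(0)=0$, impossible because $g_1(0)<0<\alpha_1$. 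Hence $w_-$ is a hyperbolic equilibrium. For $M_+$ a direct computation gives $S_\omega+Df(v_\infty)=2|v_\infty|^2g'(|v_\infty|^2)\,\diag(1,0)$, so the characteristic determinant factors as $\lambda\,q(\lambda)$ with (writing $g_j'=g_j'(|v_\infty|^2)$)
\begin{equation*}
  q(\lambda)=|\alpha|^2\lambda^3+2\alpha_1 c\,\lambda^2+\bigl(c^2+2|v_\infty|^2(\alpha_1 g_1'+\alpha_2 g_2')\bigr)\lambda+2|v_\infty|^2 c\,g_1'.
\end{equation*}
Since $q(0)=2|v_\infty|^2 c\,g_1'\neq0$ and $\Re q(i\rho)=2c\bigl(-\alpha_1\rho^2+|v_\infty|^2g_1'\bigr)<0$ for all $\rho\in\R$ — using $\alpha_1>0$, $c>0$ and $g_1'(|v_\infty|^2)<0$ from Assumption \ref{A2} — the matrix $M_+$ has $0$ as a simple eigenvalue, whose eigenvector $\bigl((0,|v_\infty|)^\top,0\bigr)=\bigl(\tfrac{d}{d\theta}R_\theta v_\infty|_{\theta=0},\,0\bigr)$ spans the tangent at $w_+$ to the circle of equilibria $\Gamma:=\{(R_\theta v_\infty,0):\theta\in S^1\}$, while all remaining eigenvalues of $M_+$ lie off the imaginary axis.

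Now I would extract the exponential decay. As $x\to-\infty$ the orbit $w$ converges to the hyperbolic equilibrium $w_-$, hence lies on its unstable manifold, so $|w(x)|\le Ke^{\mu x}$ for $x\le0$ with any $\mu>0$ below the smallest positive real part among the eigenvalues of $M_-$; this gives the estimate for $v_\star$ and $v_\star'$ at $-\infty$. As $x\to+\infty$ the presence of the zero eigenvalue of $M_+$ makes the argument subtler, and this is the step I expect to be the main obstacle. The one-dimensional invariant curve $\Gamma$ is tangent at $w_+$ to the centre eigenspace of $M_+$ and has the right dimension, so it serves as a local centre manifold of $w_+$, and it consists entirely of equilibria. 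The local centre–stable manifold of $w_+$ fibres as the union of the stable fibres $W^s_{\mathrm{loc}}(\gamma)$, $\gamma\in\Gamma$, each decaying at the rate of the stable spectrum of $M_+$; since $w(x)\to w_+$, the orbit $w$ lies in the centre–stable manifold and in fact in the single fibre $W^s_{\mathrm{loc}}(w_+)$, whence $|v_\star(x)-v_\infty|+|v_\star'(x)|\le Ke^{-\mu_\star x}$ for $x\ge0$. Alternatively one may eliminate the centre direction by the gauge symmetry directly: writing the complex profile as $V_\star=\rho\,e^{i\phi}$ for large $x$, the phase $\phi$ enters the profile equation only through $\phi'$ and $\phi''$, so $(\rho,\rho',\phi')$ solves an autonomous three-dimensional system with a hyperbolic equilibrium at $(|v_\infty|,0,0)$ (again by the analysis of $q$), giving exponential decay of $\rho-|v_\infty|$, $\rho'$ and $\phi'$, and integrating $\phi'$ from $x$ to $\infty$ transfers the bound to $\phi$ and then to $v_\star-v_\infty$ and $v_\star'$. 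One takes $\mu_\star>0$ below both relevant spectral gaps.

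It remains to bootstrap. From $v_\star\in C^2_b$ and $g\in C^3$ we get $f(v_\star)\in C^2$, hence $v_\star''=-A^{-1}(cv_\star'+S_\omega v_\star+f(v_\star))\in C^1$ and $v_\star\in C^3_b$; two further iterations give $v_\star\in C^5_b$, the boundedness of each derivative following from that of $v_\star$ through the equation. For the derivative decay on $x\ge0$ set $w=v_\star-v_\infty$; the estimate above already contains $|v_\star'(x)|\le Ke^{-\mu_\star x}$, and then $v_\star''=-A^{-1}(cv_\star'+S_\omega v_\star+f(v_\star))$ with $f(v_\infty+w)=f(v_\infty)+Df(v_\infty)w+O(|w|^2)$ gives $|v_\star''(x)|\le Ke^{-\mu_\star x}$; differentiating this identity once more, the nonlinear remainder contributing $\|Df(v_\infty+w)-Df(v_\infty)\|\,|v_\star'|=O(e^{-2\mu_\star x})$, gives $|v_\star'''(x)|\le Ke^{-\mu_\star x}$. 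The bounds for $x\le0$ are obtained in the same way near $w_-=0$.
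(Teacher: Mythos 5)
Your proof is correct and, in substance, follows the same strategy as the paper: recast the profile equation as a first--order system on $\R^4$, use hyperbolicity of the origin and \cite[Theorem~7.6]{Sideris} to get the exponential estimate on $\R_-$, handle the degenerate equilibrium at $w_+=(v_\infty,0)^\top$ separately, and bootstrap through the equation for the higher derivatives. Your computation that $\det(\lambda^2A+c\lambda I+S_\omega+Df(v_\infty))=\lambda\,q(\lambda)$ with $\Re q(i\rho)=2c(-\alpha_1\rho^2+|v_\infty|^2g_1')<0$ matches the paper's spectral picture exactly and correctly uses only Assumptions~\ref{A1}, \ref{A2}.

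The one place where you genuinely deviate is at $+\infty$: your primary argument invokes the fibration of the local centre--stable manifold over the circle of equilibria $\Gamma$ and identifies the orbit with the stable fibre over $w_+$, whereas the paper uses your ``alternatively'' route --- the polar ansatz $v_\star=r(\cos\phi,\sin\phi)^\top$ with variables $(r,\kappa,q)=(r,r'/r,\phi')$, which quotients out the rotational symmetry and makes $(|v_\infty|,0,0)$ hyperbolic so that the Sideris theorem applies again. Both are valid; the foliation argument is shorter but leans on a nontrivial invariant--manifold theorem (existence of the stable foliation of $W^{cs}$), while the polar reduction is elementary and self--contained, which is presumably why the paper chose it. Either way you reach the same conclusion, and the bootstrap for $v_\star\in C^5_b$ and the decay of $v_\star''$, $v_\star'''$ is handled correctly.
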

In the Appendix we give the proof of Lemma \ref{lem:asym} and the main steps of
the proof of Theorem \ref{decay}.

With the weight $\eta$ given by \eqref{eta}, let us introduce the weighted $L^2$ space
\begin{align*}
  L^2_\eta(\R,\R^n) := \{ v \in L^2(\R,\R^n): \eta v \in L^2(\R,\R^n) \},
  \quad (u,v)_{L^2_{\eta}}:= (\eta u, \eta v)_{L^2}
\end{align*}
and the associated weighted Sobolev spaces defined for $\ell \in \N$ by
\begin{align*} 
	H^\ell_\eta(\R,\R^n)  &:= \{ v \in L^2_\eta(\R,\R^n) \cap H^\ell_{\mathrm{loc}}(\R,\R^n): \partial^k v \in L^2_\eta(\R,\R^n),\, 1 \le k \le \ell \}, \\
	\| v \|_{H^\ell_\eta}^2& := \sum_{k=0}^\ell \| \partial^k v \|_{L^2_\eta}^2.
\end{align*}

Let us note that Theorem \ref{decay} ensures $v_{\star}^{(j)} \in H^{3-j}_{\eta}(\R,\R^2)$
for   $0 \le \mu < \mu_{\star}$ and $j=1,2,3$.
However, 
the profile $v_\star$ of a TOF does not decay to zero as $x \rightarrow \infty$,
and, moreover, we expect
the limit $\rho(t)=\lim_{x \rightarrow \infty}v(x,t)$ of a solution of \eqref{comovsys} to still move with time.
Therefore, the idea is to include an ODE for the dynamics of
$\rho(t)$ into the overall system.
Formally taking the limit $x \rightarrow \infty$ in \eqref{comovsys} and
assuming $v_x(x,t), v_{xx}(x,t) \rightarrow 0$ as $x \rightarrow \infty$ we obtain for $\rho$  the ODE
\begin{align} \label{phaseODE}
	\rho'(t) = S_\omega\rho(t) + f(\rho(t)).
\end{align}
Note that $v_\infty$ is a stationary solution of \eqref{phaseODE} due to
Lemma \ref{lem:asym}, and, by equivariance, there is a whole circle of equilibria
$\{R_{-\theta}v_{\infty}: \theta \in S^1\}$. Next we choose a template function
\begin{align*}
  \hat{v}(x) := \tfrac{1}{2} \tanh(\hat{\mu}x) + \tfrac{1}{2},
  \quad 0< 2 \hat{\mu} \le \mu_{\star}.
\end{align*}
The rate $\hat{\mu}$ has been chosen such that the approach toward the limits as
$x \to \pm \infty$ is weaker than for the derivatives of the solution in Theorem \ref{decay}. Such a choice is not strictly necessary but will avoid some
technicalities in the following.
If $0<\mu < 2\hat{\mu}$ we conclude $v_{\star}- \hat{v} v_{\infty} \in H^2_{\eta}(\R,\R^2)$ and we
also expect the solution $v$ of \eqref{comovsys}
to satisfy $v(\cdot,t) - \hat{v} \rho(t) \in H^2_\eta(\R,\R^2)$, i.e. to lie in an
affine linear space with a time dependent offset given by $\rho$. 
Therefore,  we introduce the Hilbert space
\begin{align*}
  X_\eta := \Big\{ (v,\rho)^\top:\, v: \R \rightarrow \R^2,\, \rho \in \R^2,\, v-\rho\hat{v} \in L^2_\eta(\R,\R^2) \Big\}  
\end{align*}
with inner product $\big( (u,\rho)^{\top},
  (v,\zeta)^{\top} \big)_{X_{\eta}}=(\rho,\zeta)+(u-\rho \hat{v},v-\zeta \hat{v})_{L^2_{\eta}}$. Similarly, we define the smooth analog
\begin{align*}
	X^\ell_\eta := \left\{ (v,\rho)^\top \in X_\eta: v \in H^\ell_{\mathrm{loc}},\partial^k v \in L^2_\eta,\, 1 \le k \le \ell \right\}, \quad \ell \in \N_0
\end{align*}
with the norm	given by
$\left\| (v,\rho)^\top \right\|_{X^\ell_\eta}^2 := |\rho|^2 + \| v - \rho \hat{v}\|_{L^2_\eta}^2 + \sum_{k=1}^{\ell} \|\partial^k v\|_{L^2_\eta}^2$. We further set
$Y_\eta := X^2_\eta$ and denote the elements of $X^{\ell}_\eta$ by bold letters, for example,
\begin{align*} \v = (v,\rho)^\top , \quad \v_\star = (v_\star,v_\infty)^\top,
  \quad \v_0= (u_0,\rho_0)^{\top}.
\end{align*}
As noted above, Theorem \ref{decay} implies $v_\star \in v_\infty \hat{v} + H^2_\eta$ and thus $\v_\star \in Y_\eta$.
Instead of \eqref{perturbsys}, we consider the extended Cauchy problem on
$X_\eta$ 
\begin{align} \label{CP}
	\v_t = \F(\v), \quad \v(0) = \v_\star + \v_0,
\end{align}
where $\F$ is a semilinear operator given by
\begin{align} \label{F}
  \F : Y_\eta \rightarrow X_\eta, \quad \vek{v}{\rho} = \v \mapsto \F (\v) =
  \vek{Av_{xx} + cv_x + S_\omega v + f(v)}{S_\omega \rho + f(\rho)}.
\end{align}
With these settings, $\v_{\star}$ becomes a stationary solution of
\eqref{CP}, and our task is to prove its nonlinear stability with asymptotic
phase. For this purpose, let us extend the group action induced by rotation
and translation of elements from $L^2_{\eta}$ to $X_\eta$ as follows:
\begin{align} \label{groupaction}
  a(\gamma): X_\eta \rightarrow X_\eta, \quad \vek{v}{\rho} \mapsto a(\gamma)\vek{v}{\rho} = \vek{R_{-\theta} v(\cdot - \tau)}{R_{-\theta} \rho},\quad
  \gamma =(\theta,\tau) \in \G := S^1 \times \R.
\end{align}
The operator $\F$ from \eqref{F} is then equivariant w.r.t. the group action,
i.e. $\F(a(\gamma) \v) = a(\gamma) \F(\v)$ for all $\gamma \in \G$ and
$u \in Y_{\eta}$. Further a metric on $\G$ is given by
\begin{align*}
  d_{\G} (\gamma_1,\gamma_2) = |\gamma_1 - \gamma_2|_{\G}, \quad |\gamma|_{\G} := \min_{k \in \Z} |\theta - 2\pi k| + |\tau|, \quad \gamma = (\theta,\tau)
  \in \G.
\end{align*}

Finally,  we collect the assumptions on the linearized operator
$L: \D(L)=H^2 \subset L^2 \rightarrow L^2$ from
\eqref{LatL}. The operator $L$  will turn out to be
  closed and  densely defined. We denote its resolvent set by
\begin{align*}
	\mathrm{res}(L) := \{ s \in \C: sI-L: \D(L) \rightarrow L^2 \text{ is bijective} \} 
\end{align*}
and its spectrum by $\sigma(L) = \C \backslash  \mathrm{res}(L)$. 
The further subdivision of the spectrum into the essential spectrum and the point spectrum varies in the literature (see the five different notions in \cite{EdmundsEvans}).
We use the following definition (see  $\sigma_{\mathrm{e},4}(L)$ in \cite[Ch.I.4,IX.1]{EdmundsEvans} or \cite[Ch.3]{KapitulaPromislow} and note the slight
deviation from \cite{Henry},\cite{Kato}):
\begin{align} \label{eq2:defessential}
	\sigma_{\mathrm{pt}}(L) := \{ s \in \sigma(L): sI - L \text{ is Fredholm of index } 0 \}, \quad
  \sigma_{\mathrm{ess}}(L) := \sigma(L) \backslash \sigma_{\mathrm{pt}}(L). 
\end{align}
When we insert the translates $v_{\star}(\cdot- \tau)$ from \eqref{relequi}
into the stationary
equation \eqref{comovsys} and differentiate with respect to $\tau$ we
obtain that the nullspace $\ker(L)$ of $L$ contains at least
$v'_{\star}\in H^2$.
The following condition requires that there are no 
(generalized) eigenfunctions and that  eigenvalues from the point
spectrum lie strictly to the left of the imaginary axis.
\begin{assumption}[Eigenvalue Condition] \label{A5} $ $ \\
    There exists $\beta_E > 0$ such that $\Re s < - \beta_E$ holds
  for all $s \in \sigma_{\mathrm{pt}}(L)$. Moreover,
  \begin{align}
    \label{eq3:zerosimple}
	\dim \ker (L) = \dim \ker (L^2)=1.
\end{align}
\end{assumption}
Recall $v'_{\star}\in \ker(L)$ so that \eqref{eq3:zerosimple} implies
$\ker(L) = \mathrm{span}\{ v'_{\star} \}$. In Theorem \ref{thm4.17} below we will
see that $L:H^2 \to L^2 $ is not Fredholm, hence $0$ belongs to
$\sigma_{\mathrm{ess}}(L)$ and not to $\sigma_{\mathrm{pt}}(L)$.
For this reason we wrote condition \eqref{eq3:zerosimple} explicitly in
terms of nullspaces, and $\Re s < -\beta_E$ for $s\in \sigma_{\mathrm{pt}}(L)$
is no contradiction for $s=0$.

Differentiating the equation \eqref{comovsys} for the stationary continuum
\eqref{relequi} with respect to the first group variable $\theta\in S^1$
produces a second `eigenfunction' $ S_1 v_{\star}$ which, however, does
not belong to $\D(L)=H^2$. But this eigenfunction will appear for the 
extended operator obtained by linearizing $\F$ from
\eqref{F} at $\v_{\star}$:
\begin{align} \label{calL}
	\L_\eta : Y_\eta \rightarrow X_\eta, \quad \vek{v}{\rho} \mapsto \L_\eta \vek{v}{\rho} = \vek{Av_{xx} + cv_x + S_\omega v + Df(v_\star) v}{S_\omega \rho + Df(v_\infty)\rho}.
\end{align}
The subindex $\eta$ indicates that the operator $\L_{\eta}$ depends
on the weight through its domain and range. We further write $\L=\L_1$ in case $\mu = 0$, $\eta \equiv 1$ and introduce
\begin{align*}
	E_\omega := S_\omega + Df(v_\infty)
\end{align*} 
for the second component of the operator.
 In Section \ref{sec3} we prove the following result for the point spectrum of the operator $\L_{\eta}$ defined in \eqref{calL}.
\begin{lemma} \label{lemma4.18}
  Let Assumption \ref{A1}, \ref{A2}, \ref{A4} and \ref{A5} be satisfied. Then there exists a constant $\mu_1\in (0, 2 \hat{\mu})$
  such that the following holds for all weight functions \eqref{eta} with 
  $0 < \mu \le \mu_1$ :
  \begin{enumerate}[i)]
    \item  The eigenvalue $0$ belongs to $\sigma_{\mathrm{pt}}(\L_\eta)$ and has 
    geometric and algebraic multiplicity $2$, more precisely, 
\begin{align} \label{eq2:eigenfunctions}
  	\ker(\L_\eta^2)= \ker (\L_\eta) =&\, \mathrm{span} \{\varphi_1, \varphi_2 \}  , \quad
\varphi_1 =  (v'_{\star}, 0)^{\top}, \quad\varphi_2= (S_1 v_\star, S_1 v_\infty)^{\top}.
\end{align}
\item There exists some $\beta_1=\beta_1(\mu)>0$ such that all eigenvalues $s \in \sigma_{\mathrm{pt}}(\L_\eta) \setminus \{0\}$
    satisfy $\Re s < - \beta_1<0$.
  \end{enumerate}
  \end{lemma}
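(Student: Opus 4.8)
The plan is to exploit the block structure of $\L_\eta$: the second component of $\L_\eta\v$ depends only on $\rho$, through the constant matrix $E_\omega=S_\omega+Df(v_\infty)$, and the first component only on $v$, through the differential expression $L$ of \eqref{LatL}; the two blocks interact solely via the constraint $v-\rho\hat v\in L^2_\eta$ built into $X_\eta$. First I would compute $E_\omega$. From $f(u)=g(|u|^2)u$ one gets $Df(u)h=g(|u|^2)h+2(u^\top h)g'(|u|^2)u$, and inserting $v_\infty=(|v_\infty|,0)^\top$ together with $g(|v_\infty|^2)=-S_\omega$ (Lemma \ref{lem:asym}) yields
\[
  E_\omega = 2|v_\infty|^2\begin{pmatrix} g_1'(|v_\infty|^2) & 0 \\ g_2'(|v_\infty|^2) & 0 \end{pmatrix},
\]
a rank-one matrix with eigenvalues $0$, with eigenvector $e_2=|v_\infty|^{-1}S_1v_\infty$, and $s_\infty:=2|v_\infty|^2g_1'(|v_\infty|^2)<0$ by Assumption \ref{A2}; moreover $\ran(E_\omega)=\mathrm{span}\{(g_1'(|v_\infty|^2),g_2'(|v_\infty|^2))^\top\}$, which does not contain $e_2$ since $g_1'(|v_\infty|^2)\neq0$.

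For part i) I would first verify $\varphi_1,\varphi_2\in\ker(\L_\eta)$: differentiating \eqref{statcomovsys} with respect to translation gives $Lv_\star'=0$, and differentiating the family $R_{-\theta}v_\star$ of solutions of \eqref{statcomovsys} with respect to $\theta$ at $\theta=0$ gives $L(S_1v_\star)=0$ as a differential expression, using that $A,S_\omega$ commute with $S_1$ and $Df(v_\star)S_1v_\star=S_1f(v_\star)$ by rotational equivariance of $f$; Theorem \ref{decay} and the choice $0<\mu<2\hat\mu\le\mu_\star$ give $v_\star'\in H^2_\eta$ and $S_1v_\star-(S_1v_\infty)\hat v\in H^2_\eta$, so $\varphi_1,\varphi_2\in Y_\eta$, and $E_\omega S_1v_\infty=0$ finishes $\L_\eta\varphi_j=0$; independence is clear from the $\rho$-components $0$ and $S_1v_\infty\neq0$. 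Conversely, if $\L_\eta(v,\rho)=0$ then $E_\omega\rho=0$, so $\rho=(\beta/|v_\infty|)S_1v_\infty$ for some $\beta\in\R$, and $w:=v-(\beta/|v_\infty|)S_1v_\star=(v-\rho\hat v)-(\beta/|v_\infty|)S_1(v_\star-v_\infty\hat v)$ lies in $H^2_\eta\subset H^2$ and solves $Lw=0$; Assumption \ref{A5} forces $w\in\ker(L)=\mathrm{span}\{v_\star'\}$, hence $(v,\rho)\in\mathrm{span}\{\varphi_1,\varphi_2\}$. For the absence of generalized eigenfunctions, write $\L_\eta\w=a\varphi_1+b\varphi_2$ with $\w=(w,\sigma)$; comparing $\rho$-components gives $E_\omega\sigma=bS_1v_\infty$, which forces $b=0$ because $S_1v_\infty\notin\ran(E_\omega)$, and the same reduction then yields $\tilde w\in H^2$ with $L\tilde w=av_\star'$; if $a\neq0$ then $\tilde w/a\in\ker(L^2)\setminus\ker(L)$, contradicting $\dim\ker(L^2)=\dim\ker(L)=1$, so $a=0$ and $\ker(\L_\eta^2)=\ker(\L_\eta)$. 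Finally $0\in\sigma_{\mathrm{pt}}(\L_\eta)$ rather than $\sigma_{\mathrm{ess}}(\L_\eta)$ because for $\mu>0$ the essential spectrum has been moved off the imaginary axis (Assumption \ref{A4}; this is part of the Fredholm analysis of Section \ref{sec3}), so $\L_\eta$ is Fredholm of index $0$ at $s=0$.

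For part ii) let $s\neq0$ with $\L_\eta(v,\rho)=s(v,\rho)$, $(v,\rho)\neq0$, i.e. $E_\omega\rho=s\rho$ and $Lv=sv$ with $v-\rho\hat v\in H^2_\eta$. If $\rho\neq0$ then $s\in\sigma(E_\omega)=\{0,s_\infty\}$, hence $s=s_\infty<0$. If $\rho=0$ then $v\in H^2_\eta\subset H^2$ is a nonzero eigenfunction of $L$, and conjugation with $\eta$ turns $\eta v$ into an eigenfunction of the weighted operator. Here I would invoke the weighted resolvent and Fredholm estimates of Section \ref{sec3}: using Assumption \ref{A4} one shows that for $0<\mu\le\mu_1$ the essential spectrum of $\L_\eta$ lies in $\{\Re s\le-\delta(\mu)\}$ with $\delta(\mu)>0$, and that off it $sI-\L_\eta$ is Fredholm of index $0$; an eigenvalue in that region is isolated with an exponentially decaying eigenfunction and hence corresponds to an eigenvalue $s\in\sigma_{\mathrm{pt}}(L)$, for which $\Re s<-\beta_E$ by Assumption \ref{A5}. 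Taking $\beta_1:=\tfrac12\min\{\beta_E,|s_\infty|,\delta(\mu)\}$ and shrinking $\mu_1$ if necessary gives $\Re s<-\beta_1$ in all remaining cases.

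The step carrying the real weight is this spectral input to ii): locating the essential spectrum of $\L_\eta$ sharply enough. The delicate point is that at $\mu=0$ the essential spectrum touches the imaginary axis at $0$, so one must show — and this is exactly where Assumption \ref{A4} (negative quadratic contact) enters, through a perturbation analysis of the dispersion relation $\det(Az^2+cz+E_\omega-s)=0$ along the line $\Re z=-\mu$ — that an arbitrarily small weight $\mu>0$ shifts the whole essential spectrum strictly into $\{\Re s<0\}$; this pins down the admissible threshold $\mu_1$ and the gap $\beta_1=\beta_1(\mu)$. The remaining ingredients (the form of $E_\omega$, the kernel and Jordan-chain computations, the reduction to the unweighted operator $L$) are then routine once this spectral picture is available.
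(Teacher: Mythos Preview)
Your treatment of part i) is correct and matches the paper's argument: the block structure via $E_\omega$, the reduction to $\ker(L)$ and $\ker(L^2)$ in Assumption~\ref{A5}, and the observation $S_1v_\infty\notin\ran(E_\omega)$ to kill generalized eigenvectors all appear there essentially verbatim.

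Part ii) has a genuine gap. When $\rho=0$ you obtain $v\in H^2_\eta\subset H^2$ with $Lv=sv$, and you then assert that this ``corresponds to an eigenvalue $s\in\sigma_{\mathrm{pt}}(L)$.'' But membership in $\sigma_{\mathrm{pt}}(L)$ requires that $sI-L:H^2\to L^2$ be Fredholm of index~$0$, a statement about the \emph{unweighted} operator. Since $\sigma_{\mathrm{ess}}(L)$ touches the imaginary axis at $0$ in a quadratic fashion, points $s$ with $\Re s$ slightly negative and $|\Im s|$ small (e.g.\ $s=-\epsilon$ real) lie \emph{outside} the unweighted Fredholm-$0$ component $\Omega_\infty(0)$; for such $s$ the operator $sI-L$ has Fredholm index $\neq0$, so $s\in\sigma_{\mathrm{ess}}(L)$ and Assumption~\ref{A5} says nothing. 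Your argument does not exclude the possibility of a weighted eigenvalue sitting right there. Shifting the essential spectrum of $\L_\eta$ to the left does not help, because the contradiction you want to derive runs through the unweighted operator $L$.

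The paper closes this gap with one additional step you are missing: it first shows that $0$ is a \emph{simple} eigenvalue of $L_\eta:H^2_\eta\to L^2_\eta$ (Fredholm of index $0$ by Theorem~\ref{thm4.17} and Lemma~\ref{lemma4.14}), hence isolated, so there is $s_0=s_0(\mu)>0$ with no eigenvalues of $L_\eta$ in $0<|s|\le s_0$. Any nonzero eigenvalue $s$ with $\Re s\ge-\beta_0$ then satisfies $|s|>s_0$, and with $\beta_0$ chosen small relative to $s_0$ one checks that such $s$ lies in the unweighted sector $\S_{\varepsilon,\tilde\beta}(0)\subset\Omega_\infty(0)$; only now does $sI-L$ have Fredholm index $0$, giving $s\in\sigma_{\mathrm{pt}}(L)$ and the desired contradiction with Assumption~\ref{A5}. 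You should insert this isolation step before invoking Assumption~\ref{A5}.
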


Now we are in a position to formulate the main result.
\begin{theorem} \label{Theorem4.10}
  Let Assumption \ref{A1}, \ref{A2}, \ref{A4} and \ref{A5} be satisfied and let
  $\eta$ be given by \eqref{eta}. Then there exists $\mu_0 > 0$ such that for
  every $\mu \in (0,\mu_0)$ there are constants
  $\varepsilon_0(\mu), \beta(\mu), K(\mu), C_\infty(\mu) > 0$ so that the
  following statements hold. For all initial perturbations $\v_0 \in Y_\eta$
  with $\| \v_0 \|_{X_\eta^1} < \varepsilon_0$  the equation \eqref{CP} has a
  unique global solution
  $\v \in C((0,\infty), Y_\eta) \cap C^1([0,\infty),X_\eta)$ which can be
    represented as
\begin{align*}
	\v(t) = a(\gamma(t)) \v_\star + \w(t), \quad t \in [0,\infty)
\end{align*}
for suitable functions $\gamma \in C^1([0,\infty),\G)$ and $\w \in C((0,\infty),Y_\eta) \cap C^1([0,\infty), X_\eta)$. Further, there exists
an asymptotic phase $\gamma_\infty = \gamma_\infty(\v_0) \in \G$ such that
\begin{align*}
	\| \w(t) \|_{X^1_\eta} + |\gamma(t) - \gamma_\infty|_\G & \le K e^{-\beta t} \| \v_0 \|_{X^1_\eta}, \quad 	|\gamma_\infty|_\G \le C_\infty \| \v_0 \|_{X^1_\eta}.
\end{align*}
\end{theorem}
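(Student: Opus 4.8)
The plan is to follow the classical Henry--Sandstede strategy of nonlinear stability with asymptotic phase, but carried out in the extended phase space $X_\eta$ and with the exponential weight $\eta$ doing double duty: it pushes the essential spectrum off the imaginary axis (by Assumption \ref{A4}), and it makes the group orbit $\mathcal{O}(\v_\star)$ a nice submanifold along which we can decompose. First I would fix $\mu \in (0,\mu_1)$ with $\mu_1$ from Lemma \ref{lemma4.18}, so that $0 \in \sigma_{\mathrm{pt}}(\L_\eta)$ is a semisimple eigenvalue of multiplicity $2$ with eigenspace $\mathrm{span}\{\varphi_1,\varphi_2\}$ and the rest of the point spectrum lies in $\{\Re s < -\beta_1\}$. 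Combined with the resolvent estimates on the essential spectrum advertised for Section \ref{sec3}, this yields a spectral projection $P: X_\eta \to \mathrm{span}\{\varphi_1,\varphi_2\}$ commuting with $\L_\eta$, a complementary $\L_\eta$-invariant splitting $X_\eta = \mathcal{R}(P) \oplus \ker(P)$, and—via the semigroup estimates of Section \ref{sec4}—a uniform decay bound $\|e^{t\L_\eta}(I-P)\|_{X_\eta^1 \to X_\eta^1} \le K e^{-\beta t}$ for some $\beta \in (0,\min(\beta_1,\beta_E,\text{quadratic-contact rate}))$, with a smoothing estimate $X_\eta \to X_\eta^1$ picking up a $t^{-1/2}$ factor near $t = 0$.

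Next I would set up the coordinates. Writing the sought solution as $\v(t) = a(\gamma(t))\v_\star + \w(t)$ and imposing the phase condition $\w(t) \in \ker(P)$ (equivalently $\langle \psi_j, \w(t)\rangle = 0$ for a biorthogonal dual basis $\psi_1,\psi_2$ of $\mathcal{R}(P^*)$), one derives by the implicit function theorem a local diffeomorphism between a tubular neighborhood of $\mathcal{O}(\v_\star)$ in $Y_\eta$ and a neighborhood of $(0,0)$ in $\G \times \ker(P)$; this is exactly the content promised for Section \ref{sec5}. Differentiating the ansatz and using equivariance $\F(a(\gamma)\v) = a(\gamma)\F(\v)$ together with the fact that $\partial_{\theta} a(\gamma)\v_\star|_{\gamma=0}$ and $\partial_\tau a(\gamma)\v_\star|_{\gamma=0}$ are (up to sign) $\varphi_2$ and $-\varphi_1$, one obtains a coupled system: an ODE $\dot\gamma = g(\gamma,\w)$ with $g$ quadratically small in $\w$, and a PDE $\w_t = \L_\eta \w + N(\gamma,\w)$ on $\ker(P)$, where $N$ collects the nonlinear remainder $f(v_\star + \cdot) - f(v_\star) - Df(v_\star)\cdot$ (and its $\rho$-analog) plus the $\dot\gamma$-dependent correction terms. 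The key algebraic point is that all these remainder terms are genuinely quadratic: $\|N(\gamma,\w)\|_{X_\eta} \lesssim \|\w\|_{X_\eta^1}^2$ and $|g(\gamma,\w)|_\G \lesssim \|\w\|_{X_\eta^1}^2$, uniformly for $\w$ small in $X_\eta^1$—this uses $C^3$ regularity of $g$ from Assumption \ref{A1}, the boundedness of $v_\star$, and the Banach-algebra/Sobolev-embedding structure of $H^1_\eta(\R) \hookrightarrow L^\infty$; this is what Section \ref{sec6} is meant to establish.

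Then comes the bootstrap, which is the heart of Section \ref{sec7}. Using the variation-of-constants formula $\w(t) = e^{t\L_\eta}\w(0) + \int_0^t e^{(t-s)\L_\eta} N(\gamma(s),\w(s))\,ds$ on $\ker(P)$ together with the decay/smoothing semigroup bounds, and feeding in the quadratic estimate for $N$, one sets $\Phi(t) := \sup_{0\le s\le t} e^{\beta' s}\|\w(s)\|_{X_\eta^1}$ for some $\beta' < \beta$ and derives an inequality of the form $\Phi(t) \le K\|\v_0\|_{X_\eta^1} + C\,\Phi(t)^2$ (the $t^{-1/2}$ singularity is integrable against the exponential kernel), which for $\|\v_0\|_{X_\eta^1} < \varepsilon_0$ small closes to give $\Phi(t) \le 2K\|\v_0\|_{X_\eta^1}$ for all $t$, i.e. $\|\w(t)\|_{X_\eta^1} \le 2K e^{-\beta' t}\|\v_0\|_{X_\eta^1}$. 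Then $|\dot\gamma(t)|_\G \lesssim \|\w(t)\|_{X_\eta^1}^2 \lesssim e^{-2\beta' t}\|\v_0\|_{X_\eta^1}^2$ is integrable, so $\gamma(t) \to \gamma_\infty$ with $|\gamma(t) - \gamma_\infty|_\G \lesssim e^{-2\beta' t}\|\v_0\|_{X_\eta^1}^2 \le e^{-\beta' t}\|\v_0\|_{X_\eta^1}$ (after shrinking $\varepsilon_0$) and $|\gamma_\infty|_\G \le |\gamma(0)|_\G + \int_0^\infty|\dot\gamma| \lesssim \|\v_0\|_{X_\eta^1}$, giving the $C_\infty$ bound. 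Global existence follows by combining the local existence theorem (Section \ref{sec7}) with the a priori bound, standard continuation; the regularity class $\v \in C((0,\infty),Y_\eta)\cap C^1([0,\infty),X_\eta)$ comes from analytic-semigroup parabolic smoothing for $t>0$ and continuity of $\F$ on $Y_\eta$ at $t=0$.

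The main obstacle, I expect, is not the Gronwall bootstrap itself but everything that feeds it: proving the semigroup estimate for $e^{t\L_\eta}(I-P)$ with a clean exponential rate. Because the essential spectrum only has \emph{quadratic} contact with (a vertical line strictly left of) the imaginary axis after weighting, the resolvent $(sI-\L_\eta)^{-1}$ near the tip of the parabola is large, and one must show that the spectrum-determined growth bound is actually attained—i.e. that $\L_\eta$ generates an analytic semigroup (sectoriality of the extended operator $\F'$ on $X_\eta$, including the decoupled $\rho$-block $E_\omega$ which must have spectrum in the open left half-plane, forced by Assumptions \ref{A2} and \ref{A4}) and that the Dunford integral over a suitable deformed contour hugging the parabola yields the exponential decay. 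Equally delicate is verifying that the abstract nonlinear estimates survive in the \emph{affine} space $X_\eta$: the offset $\rho\hat v$ is not in $L^2$, so one has to check carefully that $f(v) - f(\rho\hat v)$-type differences land in $L^2_\eta$ with the right quadratic bound, exploiting the exponential decay of $v_\star - v_\infty\hat v$ from Theorem \ref{decay} and the fact that $\mu < 2\hat\mu$ keeps $v_\star - v_\infty\hat v \in H^2_\eta$. Once those two structural facts are in hand, the rest is the standard—if lengthy—machine.
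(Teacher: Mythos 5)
Your proposal follows essentially the same route as the paper: decompose $\v(t) = a(\gamma(t))\v_\star + \w(t)$ with $\w$ in the complement of the two-dimensional kernel, derive a coupled ODE-PDE system, establish exponential decay of the projected semigroup, bound the remainders, and close with a Gronwall-type bootstrap plus integrability of $\dot\gamma$. The spirit and skeleton match Sections 5--7.

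A few of your details deviate from what the paper actually proves and are worth flagging. First, you assert that the remainders are ``genuinely quadratic,'' i.e.\ $\|N(\gamma,\w)\|\lesssim\|\w\|_{X^1_\eta}^2$ and $|\dot\gamma|\lesssim\|\w\|_{X^1_\eta}^2$. The paper's Lemma \ref{Lemma4.6} only gives $\|r^{[f]}(z,\w)\|_{X^1_\eta}\le C(|z|+\|\w\|_{X^1_\eta})\|\w\|_{X^1_\eta}$, with an extra $|z|\|\w\|$ cross term arising because the linearization is taken at $\v_\star$ rather than at $a(\gamma)\v_\star$. That term is not quadratic in $\w$ alone; it is what forces the $\varepsilon\varphi$ contribution in the Gronwall Lemma \ref{Gronwall}, and the a priori bound $|z(t)|\le C_0\xi_0$ from the local existence step must be fed back in to absorb it. Your inequality $\Phi\le K\|\v_0\|+C\Phi^2$ needs this extra linear-in-$\Phi$ term, which can be absorbed for small $\varepsilon_0$, but the bookkeeping is slightly different. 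Likewise, the paper does not claim $|\dot\gamma|\lesssim\|\w\|^2$; it only uses $|r^{[z]}|\lesssim\|\w\|$, which is already integrable against $e^{-\beta t}$. Second, you invoke a $t^{-1/2}$ smoothing estimate $X_\eta\to X^1_\eta$ and integrate the singularity; the paper avoids this entirely by proving the nonlinearity estimates directly in $X^1_\eta$ (Lemma \ref{Lemma4.6} iii), iv)) and using the semigroup bound in the same space (Theorem \ref{semigroup} with $\ell=1$), so no singular kernel appears. Both work; yours is the more classical Henry route, the paper's is slightly more economical. Third, you state that $E_\omega$ must have spectrum in the open left half-plane; in fact $E_\omega$ has eigenvalues $0$ and $2\rho_1<0$ (the zero eigenvalue is unavoidable by rotational equivariance of the ODE at $+\infty$), and it is the projection $I-P_\eta$ that removes the corresponding neutral mode. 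None of these undermines the strategy, but they do need repair before the sketch becomes a proof.
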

This leads to corresponding stability statements for a TOF of the equations
\eqref{perturbsys} and \eqref{rEvo}, respectively. For simplicity, we state
the result in an informal way  under the
assumptions of Theorem \ref{Theorem4.10} for the extended version of \eqref{rEvo},
i.e.
\begin{equation} \label{evoextended}
\begin{aligned}
  u_t & = Au_{xx} + f(u), \quad u(\cdot,0)= v_{\star} + u_0,\\
  r_t & = f(r), \quad r(0)= v_{\infty}+ \rho_0.
\end{aligned}
\end{equation}
Initial perturbations $\v_0=(u_0,\rho_0)$ are assumed to be small in the sense that
\begin{align*}
  \| \v_0\|_{X^1_{\eta}}^2 &= \|u_0- \rho_0 \hat{v}\|_{L^2_{\eta}}^2 + \|\partial_x u_0 \|
  _{L^2_{\eta}}^2 + |\rho_0|^2 \le \varepsilon_0^2.
\end{align*}
Then the system \eqref{evoextended} has a unique solution $\u = (u,r)
\in C((0,\infty), Y_\eta) \cap C^1([0,\infty),X_\eta)$ and there exist
  functions $(\theta,\tau)\in C^1([0,\infty), S^1 \times \R)$ and
    a value $(\theta_{\infty},\tau_{\infty})\in S^1 \times \R$ such that
    for all $t \ge 0$
    \begin{equation*}
    \begin{aligned}
      &  \|u(\cdot,t)-R_{-\omega t -\theta(t)}v_{\star}(\cdot - c t -\tau(t))
      - \hat{v}(r(t)-R_{-\omega t -\theta(t)}v_{\infty})\|_{L^2_{\eta}}
      +\|\partial_x u(\cdot,t)\|_{L^2_{\eta}}\\
      &+ |r(t)-R_{-\omega t -\theta(t)}v_{\infty}| +|\tau(t)-\tau_{\infty}|+ |\theta(t)-\theta_{\infty}| \\
     & \le  K e^{-\beta t} \| \v_0 \|_{X^1_\eta}.
    \end{aligned}
    \end{equation*}
    Note the detailed expression for the asymptotic behavior of $\lim_{x \to \infty}u(x,t)$ as $t \to \infty$.

\sect{Spectral analysis of the linearized operator}
\label{sec3}
In this section we study the spectrum of the linear operator $\L_\eta$ from
\eqref{calL} and estimate solutions of the resolvent equation
\begin{align} \label{resolvGl}
	(sI - \L_\eta)\v = \r, \quad s \in \C,\, \r= (r,\zeta)^{\top} \in X_\eta.
\end{align}
In the first step we derive resolvent estimates for solutions
$\v \in Y_\eta$ of \eqref{resolvGl} when $|s|$ is large and $s$ lies in the
exterior of some sector opening to the left. The approach is based on
energy estimates from \cite{Kreiss}, \cite{KreissLorenz}.
\begin{lemma} \label{aprioriest}
  Let Assumption \ref{A1} and \ref{A2} be satisfied and let $\mu_2 \in (0,2\hat{\mu})$.
  Then there exist
  constants $\varepsilon_0, R_0, C> 0$ such that the following properties
  hold for all $0 \le \mu \le \mu_2$ . The operator $\L_\eta:Y_\eta \subset X_\eta \rightarrow X_\eta$ is  closed and densely defined in $X_{\eta}$.  For all
\begin{align} \label{Omega0}
	s \in \Omega_0 := \left\{ s \in \C: |s| \ge R_0, |\arg(s)| \le \frac{\pi}{2} + \varepsilon_0 \right\}
\end{align}
the equation \eqref{resolvGl} with $\v \in Y_\eta$ and $\r \in X_\eta$ implies
\begin{align}
	& |s| \| \v \|_{X_\eta}^2 + \| v_x \|_{L_\eta^2}^2 \le \frac{C}{|s|} \| \r \|_{X_\eta}^2 \label{resolest1} \\
	& |s|^2 \| \v \|_{X_\eta}^2 + |s| \| v_x \|_{L_\eta^2}^2 + \|v_{xx}\|_{L_\eta^2}^2 \le C \| \r \|_{X_\eta}^2. \label{resolest2}
\end{align}
In addition, if $\r \in X_\eta^1$ and $\v \in X^3_\eta$ then
\begin{align} \label{resolest3}
	|s|^2 \| \v \|_{X^1_\eta}^2 + |s| \| v_{xx} \|_{L_\eta^2}^2 + \|v_{xxx}\|_{L_\eta^2}^2 \le C \| \r \|_{X^1_\eta}^2.
\end{align}
\end{lemma}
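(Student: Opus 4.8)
The plan is to obtain all three estimates by the energy method of Kreiss--Lorenz applied to the two coupled components of $\v=(v,\rho)^\top$ separately, exploiting that the $\rho$-component satisfies a \emph{finite-dimensional} linear ODE system $sI-E_\omega$ while the $v$-component satisfies a weighted second-order elliptic problem on the line. First I would rewrite the resolvent equation componentwise: the second component reads $(sI-E_\omega)\rho=\zeta$, so for $|s|$ large the matrix $sI-E_\omega$ is invertible with $\|(sI-E_\omega)^{-1}\|\le C|s|^{-1}$, giving immediately $|s|^2|\rho|^2\le C|\zeta|^2\le C\|\r\|_{X_\eta}^2$; this handles the ODE part of all the estimates. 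For the PDE part I would substitute $w:=\eta(v-\rho\hat v)$ (so $w\in H^2$ when $\v\in Y_\eta$) and derive the equation satisfied by $w$: conjugating $\partial_x$ by $\eta$ produces first- and zeroth-order perturbations with coefficients $\eta'/\eta=\mu x/\sqrt{x^2+1}$ and $(\eta'/\eta)'$, both bounded uniformly in $0\le\mu\le\mu_2$ by something of order $\mu$, and the substitution $v=w/\eta+\rho\hat v$ contributes further lower-order terms involving $\hat v$, $\hat v'$, $\hat v''$ (all bounded, decaying, with the crucial gain that $0<2\hat\mu\le\mu_\star$ keeps $\eta\hat v'$, $\eta\hat v''$ in $L^2$) and the already-controlled $\rho$. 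Thus $w$ solves $Aw_{xx}+(\text{bounded})w_x+(s I+\text{bounded})w = \tilde r$ with $\|\tilde r\|_{L^2}\le C(\|\r\|_{X_\eta}+|s|^{1/2}\|\r\|_{X_\eta})$ after absorbing; more carefully one keeps the $|s|$-weights on the right-hand side honest so as not to lose the sharp powers.

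Next I would run the standard Kreiss-type energy estimate on this constant-principal-part equation for $w$. Taking the $L^2$ inner product with $w$ and with $w_{xx}$ and using $\Re\alpha=\alpha_1>0$ (hence $\Re\langle Aw_{xx},w\rangle=-\alpha_1\|w_x\|^2$ up to rotation, since $A=\alpha_1 I+\alpha_2 S_{-\pi/2}$ and $S$ is skew, so the skew part contributes nothing to the real part), one gets for $s$ in a sector $|\arg s|\le\pi/2+\varepsilon_0$ with $|s|\ge R_0$ an inequality of the form $|s|\|w\|^2+\|w_x\|^2\le \frac{C}{|s|}\|\tilde r\|^2$, provided $R_0$ is large enough to absorb the lower-order terms and $\varepsilon_0$ small enough that $\Re s$ is not too negative relative to $|s|$; differentiating once more (testing the differentiated equation against $w_{xx}$, legitimate since $\v\in Y_\eta$ gives $w\in H^2$, and by a density/mollification argument one may assume $w\in H^3$ for the a priori estimate) upgrades this to control of $\|w_{xx}\|$ and then $\|w_{xxx}\|$ with the stated powers of $|s|$, which after translating back through $v=w/\eta+\rho\hat v$ and re-inserting the $\rho$-bounds yields \eqref{resolest1}, \eqref{resolest2} and, under the extra regularity $\r\in X_\eta^1$, $\v\in X_\eta^3$, the estimate \eqref{resolest3}. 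Closedness and dense domain of $\L_\eta$ follow from the closedness of the constant-coefficient operator plus the boundedness of the perturbation terms (they are relatively bounded with small relative bound for $\mu\le\mu_2$, or simply lower-order), together with the fact that $C_c^\infty\oplus\R^2$ is dense in $X_\eta$; this I would dispatch quickly at the start.

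The main obstacle I expect is bookkeeping the weight and the coupling simultaneously without losing sharpness: the conjugation by $\eta$ introduces the $x$-dependent first-order coefficient $\mu x/\sqrt{x^2+1}$, which is not small pointwise (it tends to $\pm\mu$), so it cannot simply be absorbed into $\|w_x\|^2$ by a Cauchy--Schwarz with small constant unless one is careful to pair it with the $\alpha_1\|w_x\|^2$ term using $|\mu x/\sqrt{x^2+1}|\le\mu$ and choosing $\mu_2$ small — but the lemma asserts the constants are uniform only for $0\le\mu\le\mu_2$, so this is exactly the right mechanism and the restriction $\mu_2<2\hat\mu$ is what makes the $\hat v$-coupling terms sit in $L^2_\eta$. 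The second delicate point is ensuring the right-hand side powers of $|s|$ in \eqref{resolest1}--\eqref{resolest3} are genuinely the claimed ones and not weakened by the $|s|$ hidden in $\zeta$ through $\rho=(sI-E_\omega)^{-1}\zeta$; since that resolvent \emph{gains} a power of $|s|^{-1}$, the coupling terms $\rho\hat v$ etc.\ entering $\tilde r$ are in fact better than $\|\r\|_{X_\eta}$, so no sharpness is lost. Everything else is the routine Kreiss--Lorenz computation, and I would cite \cite{Kreiss}, \cite{KreissLorenz} for the scalar prototype and indicate only the modifications.
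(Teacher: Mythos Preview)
Your outline is correct and would prove the lemma, but it proceeds by a different decomposition than the paper's. The paper does \emph{not} decouple the $\rho$-component first, nor does it conjugate by $\eta$ to pass to unweighted $L^2$. Instead it takes the $X_\eta$-inner product of the full coupled resolvent equation $(sI-\L_\eta)\v=\r$ directly with $\v$, integrates by parts (so the weight enters only through a single cross term with coefficient $\eta'\eta^{-1}$), and extracts two master inequalities: one from the absolute value and one from the real part, the latter retaining the coercive $\alpha_1\|v_x\|_{L^2_\eta}^2$. These are then combined in three cases according to the position of $s$ in $\Omega_0$ (roughly $\Re s\ge|\Im s|$; $|\Im s|\ge\Re s\ge0$; $\Re s\le0$ with $|\Re s|\le\varepsilon_0|\Im s|$). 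The estimate \eqref{resolest2} is obtained not by testing against second derivatives but simply by solving the equation for $Av_{xx}$ and bounding pointwise using \eqref{resolest1}; closedness is then \emph{deduced from} \eqref{resolest2} rather than from abstract relatively-bounded-perturbation arguments. Your decouple-then-conjugate route is equally legitimate and arguably more modular (it reduces to the textbook Kreiss--Lorenz estimate on $L^2$), at the price of more bookkeeping when translating back through $v=\eta^{-1}w+\rho\hat v$.

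One correction to your discussion of obstacles: you say the first-order coefficient $\mu x/\sqrt{x^2+1}$ coming from the weight ``cannot simply be absorbed \ldots\ unless \ldots\ choosing $\mu_2$ small''. That cannot be the mechanism, since $\mu_2\in(0,2\hat\mu)$ is \emph{given} in the hypothesis, not at your disposal. The bounded first-order term $B(\mu,x)w_x$ is handled by Young's inequality $|(Bw_x,w)|\le\varepsilon\|w_x\|^2+C_\varepsilon\|w\|^2$ with $\varepsilon$ chosen small (independently of $\mu$) so that $\varepsilon\|w_x\|^2$ is absorbed by $\alpha_1\|w_x\|^2$, while $C_\varepsilon\|w\|^2$ is absorbed by taking $R_0$ large so that $|s|\|w\|^2$ dominates. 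The restriction $\mu_2<2\hat\mu$ is used only to keep $\eta\hat v_x$, $\eta\hat v_{xx}$ and $\eta(Df(v_\star)-Df(v_\infty))\hat v$ in $L^2$, i.e.\ for the coupling terms---exactly as you identify elsewhere in your proposal. A minor slip: $C_c^\infty\oplus\R^2$ is not contained in $X_\eta$ (for $\rho\ne0$ one has $\phi-\rho\hat v\notin L^2_\eta$); the natural dense subspace of $Y_\eta$ is $\{(\phi+\rho\hat v,\rho):\phi\in C_c^\infty,\ \rho\in\R^2\}$.
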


\begin{proof}
  For the proof let us abbreviate $C_{\star} := Df(v_\star)$, $C_\infty := Df(v_\infty)$ and 
  $(\cdot,\cdot) = (\cdot,\cdot)_{L_\eta^2(\R,\R^2)}$.
  From Theorem \ref{decay} and $\mu \le \mu_2 < \mu_{\star}$ we find for $(v,\rho)^{\top} \in X_{\eta}$ 
  \begin{equation} \label{eq3:basic}
    \begin{aligned}
    \|C_{\star}v - C_{\infty}\rho \hat{v}\|_{L^2_{\eta}} \le &\, \|C_{\star}\|_{L^{\infty}}
    \|v - \rho \hat{v}\|_{L^2_{\eta}}+
    \|(C_{\star}-C_{\infty})\rho \hat{v}\|_{L^2_{\eta}} \\
    \|(C_{\star}-C_{\infty})\rho \hat{v}\|_{L^2_{\eta}}^2 = & \,
    \|(C_{\star}-C_{\infty})\rho \hat{v}\|_{L^2_{\eta}(\R_-)}^2 +
    \|(C_{\star}-C_{\infty})\rho \hat{v}\|_{L^2_{\eta}(\R_+)}^2 \\
    \le &\, \frac{ e^{2\mu} \|C_{\star}- C_{\infty}\|_{L^{\infty}}^2}{2\hat{\mu}- \mu} |\rho|^2
    +\int_0^{\infty}\eta^2(x) |Df(v_{\star}(x))-Df(v_{\infty})|^2 dx |\rho|^2
    \\
    \le & \, K_C |\rho|^2.
    \end{aligned}
    \end{equation}
  From this one infers that the operator $\L_\eta: Y_\eta \rightarrow X_\eta$ is bounded.
  Next, we note that \eqref{resolest2} implies the closedness of $\L_\eta$. For this purpose, let $\{ \v_n \}_{n \in \N} \subset Y_\eta$ be given with
  $\v_n \rightarrow \v$ in $X_\eta$ and $\L_\eta \v_n \rightarrow \w$ in $X_\eta$.
  Pick $s_0 \in \Omega_0$ with $|s_0| \ge 1$. Then \eqref{resolest2} yields
\begin{align*}
	\| \v_n - \v_m \|_{Y_\eta}^2 & \le |s_0|^2 \| \v_n - \v_m \|_{X_\eta}^2 + |s_0| \| v_{n,x} - v_{m,x} \|_{L^2_\eta}^2 + \| v_{n,xx} - v_{m,xx} \|_{L^2_\eta}^2 \\
	& \le C_1 \| s_0( \v_n - \v_m) - \L_\eta\v_n - \L_\eta \v_m \|_{X_\eta}^2 \rightarrow 0 ,\quad n,m \rightarrow \infty.
\end{align*}
Thus, $\{ \v_n \}_{n \in \N}$ is a Cauchy sequence in $Y_\eta$ and there is $\tilde{\v} \in Y_\eta$ with $\v_n \rightarrow \tilde{\v}$ in $Y_\eta$. We conclude $\v = \tilde{\v} \in Y_\eta$ and $\v_n \rightarrow \v$ in $Y_\eta$.
Finally, $\L_\eta \v = \w$ follows from the boundedness of $\L_\eta: Y_\eta \rightarrow X_\eta$ and the estimate
\begin{align*}
	\| \L_\eta \v - \w \|_{X_\eta} & \le \| \L_\eta(\v - \v_n) \|_{X_\eta} + \| \L_\eta \v_n - \w \|_{X_\eta} \rightarrow 0, \quad n \rightarrow \infty.
\end{align*} 
The estimate \eqref{resolest3} follows by differentiating \eqref{resolvGl} w.r.t. $x$ and using \eqref{resolest2}. Therefore, it is left to show \eqref{resolest1} and \eqref{resolest2}. We begin with \eqref{resolest1}. For this purpose,
let $s \in \Omega_0$ with $R_0$ and $\varepsilon_0$ still to be determined.
Take the inner product of \eqref{resolvGl} with $\v$ in $X_\eta$ to obtain
\begin{align*}
	& (\v,\r)_{X_\eta} = (\v, (sI-\L_\eta)\v)_{X_\eta} = \left(\vek{v}{\rho}, \vek{sv - Av_{xx} - cv_x - S_\omega v - C_{\star}v}{s \rho - S_\omega \rho - C_\infty \rho}\right)_{X_\eta} \\
	& = \rho^\top (sI - S_\omega- C_\infty) \rho + (v - \rho \hat{v}, sv - Av_{xx} - cv_x - S_\omega v - C_{\star}v - (s \rho - S_\omega \rho - C_\infty \rho) \hat{v}) \\
	& = s \| \v \|_{X_\eta}^2 - \rho^\top S_{\omega}\rho - \rho^\top C_\infty \rho \\
	& \quad - (v- \rho\hat{v}, Av_{xx})_{L_\eta^2} - c(v - \rho\hat{v}, v_x) - (v - \rho\hat{v}, S_\omega (v - \rho\hat{v})) - (v-\rho\hat{v},C_{\star}v- C_\infty \rho\hat{v}).
\end{align*}
Integration by parts yields
\begin{align}
  \label{equ:proof1}
\begin{split}
  & s \| \v \|_{X_\eta}^2 + (v_x-\rho\hat{v}_x,Av_x)_{L_\eta^2} \\
  &  = \rho^\top (S_\omega + C_\infty) \rho -2(\eta'\eta^{-1}(v - \rho \hat{v}), Av_x) + c(v-\rho \hat{v}, v_x)_{L_\eta^2} \\
  & + (v-\rho\hat{v},S_\omega(v -\rho \hat{v}))_{L_\eta^2}  + (v - \rho \hat{v},C_{\star}v -C_\infty \rho\hat{v})_{L_\eta^2} + ( \v , \r)_{X_\eta}.
\end{split}
\end{align}
Further, we use Cauchy-Schwarz and Young's inequality with arbitrary
$\varepsilon_i >0$ and  \eqref{eq3:basic}  to obtain the estimates
\begin{align} \label{equ:proof1a}
\begin{split}
	|(v_x-\rho \hat{v}_x, Av_x)|	& \le | A | \left( \| v_x\|_{L^2_\eta}^2 + \frac{1}{4 \varepsilon_1} \| \rho \hat{v}_x \|_{L^2_\eta}^2 + \varepsilon_1 \| v_x \|_{L^2_\eta}^2 \right) \\
	& = | A | (1+\varepsilon_1) \| v_x \|_{L^2_\eta}^2 + \frac{ e^{2\mu} | A | }{4(2\hat{\mu}-\mu) \varepsilon_1} |\rho|^2,
\end{split}
\end{align}
\begin{align} \label{equ:proof1aa}
\begin{split}
	|(\eta' \eta^{-1}(v-\rho \hat{v}), Av_x)| & \le \frac{ \mu^2 | A |}{4 \varepsilon_2} \| v - \rho \hat{v} \|_{L^2_\eta}^2 + \varepsilon_2 | A |  \| v_x \|_{L^2_\eta}^2,
\end{split}
\end{align}
\begin{align} \label{equ:proof1b}
\begin{split}
	|c(v-\rho \hat{v}, v_x)| \le \frac{|c|}{4 \varepsilon_3} \| v-\rho\hat{v} \|_{L^2_\eta}^2 + |c| \varepsilon_3 \| v_x \|_{L^2_\eta}^2,
\end{split}
\end{align}
\begin{align} \label{equ:proof1c}
	|(v-\rho \hat{v}, S_\omega (v-\rho \hat{v}))| \le |\omega| \| v-\rho \hat{v}\|_{L^2_\eta}^2,
\end{align}
\begin{align} \label{equ:proof1d}
\begin{split}
	|(v-\rho \hat{v}, C_{\star}v-C_\infty \rho \hat{v})| 
	& \le   \left( \| C_{\star}\|_{L^\infty}+ \frac{1}{4 \varepsilon_4} \right) \| v-\rho \hat{v} \|_{L^2_\eta}^2 + K_C \varepsilon_4 |\rho|^2.
\end{split}
\end{align}
Take the absolute value in \eqref{equ:proof1} and use \eqref{equ:proof1a}-\eqref{equ:proof1d} with $\varepsilon_i = 1$ to obtain for some $K_0,K_1 > 0$
\begin{align}
	\label{equ:proof2}
	|s| \| \v \|_{X_\eta}^2 \le K_0 \|v_x\|_{L_\eta^2}^2 + K_1 \| \v \|_{X_\eta}^2 + \| \v \|_{X_\eta} \| \r \|_{X_\eta}.
\end{align}
Next we note that $( v_x - \rho \hat{v}_x , Av_x) = \alpha_1 \| v_x \|_{L^2_\eta}^2 - (\rho \hat{v}_x, Av_x)$ and
\begin{align} \label{equ:proof2a}
	|(\rho \hat{v}_x,Av_x)| \le | A | \| \hat{v}_x\|_{L^2_\eta} |\rho| \| v_x \|_{L^2_\eta} \le \frac{ e^{2 \mu} | A |}{(2\hat{\mu}-\mu) \varepsilon_5}|\rho|^2 + \varepsilon_5 | A| \| v_x \|_{L^2_\eta}^2.
\end{align}
Taking the real part in \eqref{equ:proof1} we obtain by using Cauchy-Schwarz,
Young's inequality and \eqref{equ:proof1aa}-\eqref{equ:proof1d} as well as
\eqref{equ:proof2a} with $\varepsilon_2 = \varepsilon_5 = \frac{\alpha_1}{8| A|}$, $\varepsilon_3 = \frac{\alpha_1}{4 |c|}$, $\varepsilon_4 = 1$ the estimate 
\begin{align*}
	\Re s  \| \v \|_{X_\eta}^2 + \alpha_1 \| v_x \|_{L_\eta^2}^2 & \le \big(\varepsilon_5 | A | + \varepsilon_2 | A | + \varepsilon_3 |c|\big) \| v_x \|_{L^2_\eta}^2 + K_2 \| \v \|_{X_\eta}^2 + \| \v \|_{X_\eta} \| \r \|_{X_\eta}  \\
	& \le \frac{\alpha_1}{2} \| v_x \|_{L^2_\eta}^2 + K_2 \| \v \|_{X_\eta}^2 + \| \v \|_{X_\eta} \| \r \|_{X_\eta}.
\end{align*}
This yields
\begin{align} \label{equ:proof3}
	\Re s \| \v \|_{X_\eta}^2 + \frac{\alpha_1}{2} \| v_x \|_{L_\eta^2}^2 \le K_2 \| \v \|_{X_\eta}^2 + \| \v \|_{X_\eta} \|\r \|_{X_\eta}.
\end{align}
The remaining proof falls naturally into three cases depending on the value of $s\in \Omega_0$. \\
\textbf{Case 1:} $\Re s \ge |\Im s|$, $\Re s > 0$, $|s| \ge 2\sqrt{2} K_2$. \\
We have $0 < \Re s \le |s| \le \sqrt{2} \Re s$. Therefore, using \eqref{equ:proof3} and Young's inequality with $\varepsilon = \frac{\sqrt{2}}{|s|}$, we obtain
\begin{align*}
	\frac{|s|}{\sqrt{2}} \| \v \|_{X_\eta}^2 + \frac{\alpha_1}{2} \| v_x \|_{L_\eta^2}^2 & \le \frac{|s|}{2\sqrt{2}} \| \v \|_{X_\eta}^2 + \| \v \|_{X_\eta} \| \r \|_{X_\eta} \\
	& \le \frac{|s|}{2\sqrt{2}} \| \v \|_{X_\eta}^2 + \frac{|s|}{4\sqrt{2}} \| \v \|_{X_\eta}^2 + \frac{\sqrt{2}}{|s|} \| \r \|_{X_\eta}^2.
\end{align*}
Thus, for a suitable constant $C$
\begin{align*}
	|s| \| \v \|_{X_\eta}^2 + \| v_x \|_{L_\eta^2}^2 \le \frac{C}{|s|} \| \r \|_{X_\eta}^2.
\end{align*}
\textbf{Case 2:} $| \Im s | \ge \Re s \ge 0$. \\
From \eqref{equ:proof3} we have
\begin{align*}
	\| v_x \|_{L_\eta^2} \le \frac{2}{\alpha_1} \left( K_2 \| \v \|_{X_\eta}^2 + \| \v \|_{X_\eta} \| \r \|_{X_\eta} \right).
\end{align*}
Use this in \eqref{equ:proof2} and find a constant $K_3>0$ such that
\begin{align*}
	|s| \| \v \|_{X_\eta}^2 
	& \le K_3 \left( \| \v \|_{X_\eta}^2 + \| \v \|_{X_\eta} \| \r \|_{X_\eta} \right).
\end{align*}
 Take $|s| > 2K_3$ and use Young's inequality with $\varepsilon = |s|^{-1}$
\begin{align*}
	|s| \| \v \|_{X_\eta}^2 & \le \frac{|s|}{2} \| \v \|_{X_\eta}^2 + K_3 \| \v \|_{X_\eta} \| \r \|_{X_\eta} \le \frac{|s|}{2} \| \v \|_{X_\eta}^2 + \frac{|s|}{4} \| \v \|_{X_\eta}^2 + \frac{K_3^2}{|s|} \| \r \|_{X_\eta}^2, 
\end{align*}
hence
\begin{align} \label{equ:proof4}
	|s| \| \v \|_{X_\eta}^2 \le \frac{4 K_3^2}{|s|} \| \r \|_{X_\eta}^2.
\end{align}
Using \eqref{equ:proof3}, \eqref{equ:proof4} and taking $|s|\ge 4K_2$ yields by Young's inequality with $\varepsilon = |s|^{-1}$
\begin{align} \label{equ:proof5}
\begin{split}
  \frac{\alpha_1}{2} \| v_x \|_{L_\eta^2}^2 & \le \frac{|s|}{4} \| \v \|_{X_\eta}^2 + \| \v \|_{X_\eta} \| \r \|_{X_\eta} \le \frac{|s|}{4} \| \v \|_{X_\eta}^2 + \frac{|s|}{4} \| \v \|_{X_\eta}^2 + \frac{1}{|s|} \| \r \|_{X_\eta}^2
 + \frac{K_4}{|s|} \| \r \|_{X_\eta}^2.
\end{split}
\end{align}
Combining \eqref{equ:proof4} and \eqref{equ:proof5} we arrive at the estimate
\eqref{resolest1}.

\textbf{Case 3:} $\Re s \le 0$, $|\Re s | \le \varepsilon_0 | \Im s |$.
Using \eqref{equ:proof2} and \eqref{equ:proof3} yields 
\begin{align*}
	& | \Im s | \| \v \|_{X_\eta}^2 \le |s| \| \v \|_{X_\eta}^2 \le K_0 \|v_x \|_{L_\eta^2}^2 + K_1 \| \v\|_{X_\eta}^2 + \| \v \|_{X_\eta} \| \r \|_{X_\eta} \\
	& \le \frac{2 K_0}{\alpha_1} \left( |\Re s| \| \v \|_{X_\eta}^2 + K_2 \| \v \|_{X_\eta}^2 + \| \v \|_{X_\eta} \| \r \|_{X_\eta} \right) + K_1 \| \v \|_{X_\eta}^2 + \| \v \|_{X_\eta} \| \r \|_{X_\eta}.
\end{align*}
Choose $0 < \varepsilon_0 < \frac{\alpha_1}{4 K_0}$, so that
$\frac{2K_0}{\alpha_1} |\Re s| \le \frac{|\Im s|}{2}$ holds. Then we conclude 
\begin{align*}
	|\Im s| \| \v \|_{X_\eta}^2 \le K_5 \left( \| \v \|_{X_\eta}^2 + \| \v \|_{X_\eta} \| \r \|_{X_\eta} \right).
\end{align*}
Since $|s| \le \sqrt{1+ \varepsilon_0^2} |\Im s|$ we also have
\begin{align*}
	|s| \| \v \|_{X_\eta}^2 \le K_6 \left( \| \v \|_{X_\eta}^2 + \| \v \|_{X_\eta} \| \r \|_{X_\eta} \right).
\end{align*}
Now take $|s| > 2K_6$ and use Young's inequality with $\varepsilon = |s|^{-1}$ to find
\begin{align*}
	|s| \| \v \|_{X_\eta}^2 & \le \frac{|s|}{2} \| \v \|_{X_\eta}^2 + K_6 \| \v \|_{X_\eta} \| \r \|_{X_\eta} 
	 \le \frac{|s|}{2} \| \v \|_{X_\eta}^2 + \frac{|s|}{4} \| \v \|_{X_\eta}^2 + \frac{K_6^2}{|s|}\| \r \|_{X_\eta}^2,
\end{align*}
which yields
\begin{align} \label{equ:proof6}
	|s| \| \v \|_{X_\eta}^2 \le \frac{K_7}{|s|} \| \r \|_{X_\eta}^2.
\end{align}
To complete \eqref{resolest1}, take $|s| \ge 2K_2$ in \eqref{equ:proof3} and use \eqref{equ:proof6},
\begin{align*}
\begin{split}
	\frac{\alpha_1}{2} \| v_x\|_{L_\eta^2}^2 & \le |\Re s| \| \v \|_{X_\eta}^2 + \frac{|s|}{2} \| \v \|_{X_\eta}^2 + \| \v\|_{X_\eta} \| \r \|_{X_\eta} \\
	& \le |s| \| \v \|_{X_\eta}^2 + \frac{|s|}{2} \| \v \|_{X_\eta}^2 + \frac{|s|}{2} \| \v \|_{X_\eta}^2 + \frac{1}{2|s|} \| \r \|_{X_\eta}^2 \\
	& = 2|s| \| \v \|_{X_\eta}^2 + \frac{1}{2 |s|} \| \r \|_{X_\eta}^2 \le \frac{K_8}{|s|} \| \r \|_{X_\eta}^2.
\end{split}
\end{align*}
It remains to prove \eqref{resolest2}. 
The resolvent equation \eqref{resolvGl} implies  the following equation
in $X_\eta$,
\begin{align*}
\vek{-v_{xx}}{0}  = \vek{A^{-1}( -sv + cv_x + S_\omega v + Cv + r )}{A^{-1}( -s\rho + S_\omega \rho + C_\infty \rho + \zeta )}.
\end{align*}
Thus,  with the help of \eqref{eq3:basic} we obtain for $|s| \ge 1$ the estimate
\begin{align*}
	\| v_{xx} \|_{L_\eta^2}^2 & \le K_9 \left( |s|^2 \| \v \|_{X_\eta}^2 + \| v_x \|_{L_\eta^2}^2 + \| \v \|_{X_\eta}^2 + \| \r \|_{X_\eta}^2 \right) \\
	& \le 2K_9 \left( |s|^2 \| \v \|_{X_\eta}^2 + |s| \| v_x \|_{L_\eta^2}^2 + \| \r \|_{X_\eta}^2 \right). 
\end{align*}
When combined with \eqref{resolest1} this proves our assertion.
\end{proof}
In the next step we study the Fredholm property of the operator
$\L_\eta$ in \eqref{calL}. First we consider the operator $L$ from \eqref{LatL} on $L^2_\eta$ and therefore, as in \eqref{calL}, indicate the dependence on the weight $\eta$ by a subindex. So we introduce
\begin{align*}
	L_\eta: H^2_\eta \rightarrow L^2_\eta, \quad v \mapsto Av_{xx} + cv_x + S_\omega v + Df(v_\star)v.
\end{align*} 
Further, let us transform $L_\eta$ into unweighted spaces
\begin{equation} \label{eq3:unweighted}
  L_{[\eta]}:H^2 \to L^2, \quad v \mapsto \eta L_\eta (\eta^{-1}v)=
  Av_{xx} + B(\mu)v_x +C(\mu)v,
\end{equation}
where $q(x) = \sqrt{x^2 +1}$ and
\begin{equation*}
  B(\mu,x) = cI+\frac{2 \mu x}{q(x)}A, \quad
  C(\mu,x)= S_{\omega}+Df(v_{\star}(x)) 
  - \frac{c \mu x}{q(x)}I + \big(\frac{\mu^2 x^2}{q(x)^2} -\frac{\mu}{q(x)}
  +\frac{\mu x^2}{q(x)^3}\big)A.
\end{equation*}
The limits as $x \to \pm \infty$ of these matrices are given by
\begin{equation} \label{eq3:limits}
  B_{\pm,\mu}=cI \pm 2 \mu A, \quad
  C_{\pm,\mu} = S_{\omega}+ \mu^2 A  +
  \begin{cases} -c\mu I + Df(v_{\infty}), & \text{in case}\;\; +, \\
    c\mu I + Df(0), &\text{in case}\; \; -.  \end{cases}
\end{equation}
With these limit matrices we define the piecewise constant operator
\begin{align*}
  L_{[\eta],\infty}:H^2 \to L^2,\quad v \to  Av_{xx}+
  (B_{-,\mu}\one_{\R_-}+B_{+,\mu} \one_{\R_+})
  v_x +(C_{-,\mu}\one_{\R_-}+C_{+,\mu} \one_{\R_+})v.
\end{align*}
The following lemma shows that it is sufficient to analyze the Fredholm properties
of $L_{[\eta],\infty}$.
\begin{lemma} \label{lemma4.14}
  Let Assumption \ref{A1} and \ref{A2} be satisfied, and assume
  $0 \le \mu \le \mu_2$ with $\mu_2$ from Lemma \ref{aprioriest}. Then
  for each $s \in \C$ the following are equivalent:
\begin{enumerate}[i)]
\item The operator $sI-L_{[\eta],\infty}: H^2 \rightarrow L^2$ is  Fredholm of index $k$.
  \item The operator $sI-L_\eta: H^2_\eta \rightarrow L^2_\eta$ is Fredholm of index $k$.
  \item The operator $sI-\L_\eta: Y_\eta \rightarrow X_\eta$ is Fredholm of index $k$.
\end{enumerate}
\end{lemma}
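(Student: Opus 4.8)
The plan is to prove the three properties pairwise equivalent, with matching Fredholm index, by reducing each operator to the next one in the list up to a Banach-space isomorphism and a relatively compact perturbation. I fix $s\in\C$ throughout; all four operators $L_{[\eta]},L_{[\eta],\infty},L_\eta,\L_\eta$ are closed with dense domain ($H^2$, $H^2$, $H^2_\eta$, $Y_\eta$ respectively — closedness of $\L_\eta$ is part of Lemma \ref{aprioriest}, the others are bounded perturbations of $A\partial_{xx}$, which is closed since $A$ is invertible by $\alpha_1>0$), so speaking of their index makes sense.

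\emph{Step 1: i)\,$\Leftrightarrow$\,ii).} Multiplication by $\eta$ is an isomorphism $L^2_\eta\to L^2$ and, since $\ln\eta=\mu\sqrt{x^2+1}$ has all derivatives bounded (e.g. $(\ln\eta)'=\mu x/\sqrt{x^2+1}$, $(\ln\eta)''=\mu(x^2+1)^{-3/2}$), also an isomorphism $H^2_\eta\to H^2$; by the very definition \eqref{eq3:unweighted} it conjugates $sI-L_\eta$ into $sI-L_{[\eta]}$, so these two operators have the same Fredholm index (or neither is Fredholm). Next, $D:=L_{[\eta]}-L_{[\eta],\infty}$ is the first-order multiplication operator with coefficients $B(\mu,\cdot)-B_{-,\mu}\one_{\R_-}-B_{+,\mu}\one_{\R_+}$ (in front of $\partial_x$) and $C(\mu,\cdot)-C_{-,\mu}\one_{\R_-}-C_{+,\mu}\one_{\R_+}$; these lie in $C_b(\R\setminus\{0\},\R^{2,2})$ and tend to $0$ as $x\to\pm\infty$ — exponentially for the potential contribution $Df(v_\star(x))-Df(v_\infty)$ by Theorem \ref{decay}, and at an explicitly computable algebraic rate for the geometric terms $\tfrac{x}{q(x)}\mp1$, $\tfrac1{q(x)}$, $\tfrac{x^2}{q(x)^3}$. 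Since $A$ is invertible, the graph norm of $L_{[\eta],\infty}$ is equivalent to $\|\cdot\|_{H^2}$, and for $m\in C_b(\R)$ with $m(x)\to0$ as $|x|\to\infty$ and $j\in\{0,1\}$ the operator $v\mapsto m\,\partial_x^j v$ is compact from $H^2$ to $L^2$: a bounded sequence in $H^2$ has an $L^2$-convergent subsequence on each $[-R,R]$ by Rellich's theorem (diagonal extraction over $R\to\infty$), while its tail $\|m\,\partial_x^j v\|_{L^2(\{|x|>R\})}\le\sup_{|x|>R}|m|\cdot\|v\|_{H^2}$ is uniformly small. Hence $D$ is $L_{[\eta],\infty}$-compact, and invariance of the Fredholm index under relatively compact perturbations gives i)\,$\Leftrightarrow$\,ii) with equal index.

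\emph{Step 2: ii)\,$\Leftrightarrow$\,iii).} The map $\Phi:X_\eta\to L^2_\eta\times\R^2$, $(v,\rho)^\top\mapsto(v-\hat{v}\rho,\rho)^\top$, is an isomorphism (isometric by the definition of the $X_\eta$ inner product) that restricts to an isomorphism $Y_\eta\to H^2_\eta\times\R^2$. Writing $w:=v-\hat{v}\rho$ and using that $\hat{v}$ is scalar, the first component of $(sI-\L_\eta)(v,\rho)^\top$ minus $\hat{v}$ times its second component equals
\begin{align*}
  (sI-L_\eta)w-Q(\cdot)\rho,\qquad Q(x):=A\hat{v}_{xx}(x)+c\hat{v}_x(x)+\big(Df(v_\star(x))-Df(v_\infty)\big)\hat{v}(x),
\end{align*}
so that $\Phi(sI-\L_\eta)\Phi^{-1}$ has the block upper triangular form
\begin{align*}
  \begin{pmatrix} sI-L_\eta & -Q \\ 0 & sI-E_\omega \end{pmatrix}:H^2_\eta\times\R^2\longrightarrow L^2_\eta\times\R^2.
\end{align*}
By Theorem \ref{decay}, together with $2\hat{\mu}\le\mu_\star$ and $\mu\le\mu_2<2\hat{\mu}$, one obtains $|Q(x)|\le Ke^{-2\hat{\mu}|x|}$, hence $\eta Q\in L^2$ and $\rho\mapsto Q(\cdot)\rho$ is a bounded, finite-rank (so compact) operator $\R^2\to L^2_\eta$. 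Subtracting this off-diagonal term is a compact perturbation, so $\Phi(sI-\L_\eta)\Phi^{-1}$ is Fredholm of index $k$ iff the block-diagonal operator $(sI-L_\eta)\oplus(sI-E_\omega)$ is; and since $sI-E_\omega$ acts on the finite-dimensional space $\R^2$ and is therefore automatically Fredholm of index $0$, the latter holds precisely when $sI-L_\eta:H^2_\eta\to L^2_\eta$ is Fredholm of index $k$. This gives ii)\,$\Leftrightarrow$\,iii) with equal index and closes the chain.

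\emph{Main obstacle.} The delicate point is the relative-compactness claim in Step 1: unlike the potential term $Df(v_\star)-Df(v_\infty)$, which decays exponentially, the coefficients produced by the derivatives of the weight $\eta$ decay only algebraically, so one cannot remove them by passing to a more strongly weighted norm and must instead use the direct Rellich-plus-uniform-tail argument; in addition one has to keep careful track of the fact that $L_{[\eta]},L_{[\eta],\infty},L_\eta,\L_\eta$ live on four different domains, so each conjugation and the equivalence of the graph norm with $\|\cdot\|_{H^2}$ (which uses $\Re\alpha>0$) needs to be verified. Step 2 is, by contrast, purely algebraic once the exponential decay of $Q$ — which rests on the choice $2\hat{\mu}\le\mu_\star$ — has been recorded.
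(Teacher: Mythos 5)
Your proof is correct and follows essentially the same route as the paper: conjugation by $\eta$ to relate $L_\eta$ and $L_{[\eta]}$, removal of the decaying coefficients as a relatively compact perturbation to pass to $L_{[\eta],\infty}$, and the homeomorphism $\Phi=\mathcal{T}$ producing the upper-triangular block operator with scalar $\hat{v}$ making the commutation clean. The only (minor) deviation is in how you close Step 2: the paper invokes the Fredholm bordering lemma \cite[Lemma 2.3]{Beyn90} directly, using only boundedness of $\mathcal{K}=Q$ in $L^2_\eta$, whereas you reprove that special case by removing the off-diagonal block as a finite-rank (hence compact) perturbation and then adding indices over the direct sum; likewise in Step 1 you spell out the Rellich-plus-tail argument that the paper delegates to \cite[Lemma 4.1]{BeynLorenz}. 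These are elaborations, not a different approach.
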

\begin{proof} Both equivalences $(i) \Leftrightarrow (ii)$ and
  $(ii) \Leftrightarrow (iii)$ follow from the invariance of
  the Fredholm index under compact perturbations \cite[Ch.IX]{EdmundsEvans}.
  For the first equivalence note that the multiplication operator
  $v \mapsto m(\cdot) v$ is compact from $H^1$ to $L^2$ if $m \in L^{\infty}$ and
  $\lim_{x \to \pm \infty}m(x) =0$ (\cite[Lemma 4.1]{BeynLorenz}).
  This shows that the Fredholm property transfers from $L_{[\eta],\infty}$
  to $L_{[\eta]}$ (see \eqref{eq3:unweighted}), and thus also to
  $L_\eta:H^2_{\eta} \to L^2_{\eta}$.  
  For the second equivalence use the homeomorphism
  $\mathcal{T}: X_{\eta} \to L_{\eta}^2 \times \R^2$,
  $(v,\rho)^{\top} \to (v - \rho \hat{v},\rho)^{\top}$ and transform $\L_\eta$ into
  the block operator
  \begin{align*}
    \mathcal{T} \L_\eta \mathcal{T}^{-1} = \begin{pmatrix} L_{\eta} & \mathcal{K}\\
     0 & E_{\omega} 
    \end{pmatrix}, \quad \mathcal{K}= \hat{v}_{xx} A+ c \hat{v}_xI +
   \hat{v}( Df(v_{\star})-Df(v_{\infty})).
  \end{align*}
  Since $\mathcal{K}$ is bounded in $L^2_{\eta}$ the result
  follows from the Fredholm bordering lemma (\cite[Lemma 2.3]{Beyn90}).
  \end{proof}
The Fredholm property of $sI - L_{[\eta],\infty}$ can be determined from
the first order system corresponding to $(sI -L_{[\eta],\infty})v=r$, i.e.
$w=(v,v_x)^{\top}$ and
\begin{equation*}
  \begin{aligned}
  (0,r)^{\top}&= w_x - (M_{-,\mu}(s) \one_{\R_-}+ M_{+,\mu}(s)\one_{\R_+})w, \\
  M_{\pm,\mu}(s) & =  \, \begin{pmatrix} 0 & I \\ A^{-1}(sI- C_{\pm,\mu}) &
    - A^{-1}B_{\pm,\mu} \end{pmatrix}.
  \end{aligned}
  \end{equation*}
We define the {\it $\mu$-dependent Fredholm set} by
\begin{align*}
  \Omega_{F}(\mu) = \{ s \in \C : M_{-,\mu}(s) \; \text{and} \;
  M_{+,\mu}(s) \; \text{are hyperbolic} \}
\end{align*}
and denote by $m_{\st}^{\pm}(s,\mu)$ the dimension of the stable subspace of
$M_{\pm,\mu}(s)$ for $s \in \Omega_{F}(\mu)$.  Rewriting the eigenvalue problem for $M_{\pm,\mu}(s)$ in $\C^4$
as a quadratic eigenvalue problem in $\C^2$ shows that
$\Omega_{F}(\mu)= \C \setminus \sigma_{\mathrm{disp},\mu}(\L_\eta)$ holds
for  the $\mu$-dependent dispersion set,
\begin{equation} \label{eq3:dispersionset}
  \begin{aligned}
  \sigma_{\mathrm{disp},\mu}(\L_\eta) = & \, \{s \in \C: \det(sI - D_{\pm}(\nu,\mu))
  =0 \; \; \text{for some}\; \; \nu \in \R \; \; \text{and some sign} \;\;
  \pm \}, \\
  D_{\pm}(\nu,\mu)= & - \nu^2 A + i \nu B_{\pm,\mu} + C_{\pm,\mu}.
  \end{aligned}
\end{equation}
The following Lemma is well-known and appears for example in
\cite[Lemma 3.1.10]{KapitulaPromislow}, \cite{Palmer88}, \cite[Sec. 3]{Sandstede02}.
\begin{lemma} \label{FredholmLetainfty}
  Let Assumption \ref{A1} and \ref{A2} be satisfied, and let
  $0 \le \mu \le \mu_2$ with $\mu_2$ from Lemma \ref{aprioriest}. Then the operator $sI - L_{[\eta],\infty}:H^2 \rightarrow L^2$ is a Fredholm operator if and only if
  $s \in \Omega_F(\mu)$. If $s \in \Omega_F(\mu)$ then the Fredholm index is given by
\begin{align} \label{eq3:indexformula}
	\mathrm{ind}(sI - L_{[\eta],\infty}) =  m_\st^+(s,\mu) - m_\st^-(s,\mu).
\end{align}
\end{lemma}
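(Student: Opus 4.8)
The plan is to pass to the associated first-order companion system and invoke the classical exponential-dichotomy characterization of Fredholm properties for operators on the line with asymptotically constant coefficients. Writing $w=(v,v_x)^{\top}$, the equation $(sI-L_{[\eta],\infty})v=r$ becomes $\mathcal{M}(s)w=\tilde r$ with
\[
  \mathcal{M}(s):H^1(\R,\C^4)\to L^2(\R,\C^4),\quad
  w\mapsto w_x-\bigl(M_{-,\mu}(s)\one_{\R_-}+M_{+,\mu}(s)\one_{\R_+}\bigr)w ,
\]
where $\tilde r\in L^2(\R,\C^4)$ is a fixed isomorphic image of $r$. A standard reduction argument shows that $sI-L_{[\eta],\infty}$ is Fredholm of index $k$ if and only if $\mathcal{M}(s)$ is: the assignment $w\mapsto w_1$ identifies $\ker\mathcal{M}(s)$ with $\ker(sI-L_{[\eta],\infty})$, and a direct computation matches the cokernels. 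Hence it suffices to analyze $\mathcal{M}(s)$, which has genuinely constant coefficients on each of $\R_-$ and $\R_+$.

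By the dichotomy criterion of Palmer \cite{Palmer88} (see also \cite[Lemma 3.1.10]{KapitulaPromislow}, \cite[Sec. 3]{Sandstede02}), $\mathcal{M}(s)$ is Fredholm on $L^2$ exactly when both $M_{-,\mu}(s)$ and $M_{+,\mu}(s)$ are hyperbolic, which by definition means $s\in\Omega_F(\mu)$. Because the coefficients are constant on each half-line, the exponential dichotomies on $\R_\pm$ are completely explicit: the dichotomy projections are the spectral projections onto the stable and unstable eigenspaces $E^{\mathrm{s}}_\pm(s),E^{\mathrm{u}}_\pm(s)$ of $M_{\pm,\mu}(s)$, with $\dim E^{\mathrm{s}}_\pm(s)=m_\st^\pm(s,\mu)$ and $\dim E^{\mathrm{u}}_\pm(s)=4-m_\st^\pm(s,\mu)$; no roughness or perturbation estimate is needed.

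For $s\in\Omega_F(\mu)$ I would then compute kernel and cokernel directly. A homogeneous solution $w$ is square-integrable on $\R_+$ iff $w(0)\in E^{\mathrm{s}}_+(s)$ and on $\R_-$ iff $w(0)\in E^{\mathrm{u}}_-(s)$; continuity at the origin gives $\ker\mathcal{M}(s)\cong E^{\mathrm{u}}_-(s)\cap E^{\mathrm{s}}_+(s)$. Running the same argument for the formal adjoint $\mathcal{M}(s)^{*}$ — which is of the same type, with matrices $-M_{\pm,\mu}(s)^{*}$ whose stable and unstable eigenspaces are complementary to $E^{\mathrm{u}}_\pm(s)$ and $E^{\mathrm{s}}_\pm(s)$ respectively — yields that the cokernel of $\mathcal{M}(s)$ has dimension $4-\dim\bigl(E^{\mathrm{u}}_-(s)+E^{\mathrm{s}}_+(s)\bigr)$. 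Subtracting and using $\dim(E^{\mathrm{u}}_-+E^{\mathrm{s}}_+)=\dim E^{\mathrm{u}}_-+\dim E^{\mathrm{s}}_+-\dim(E^{\mathrm{u}}_-\cap E^{\mathrm{s}}_+)$,
\[
  \mathrm{ind}\,\mathcal{M}(s)=\dim E^{\mathrm{u}}_-(s)+\dim E^{\mathrm{s}}_+(s)-4
  =\bigl(4-m_\st^-(s,\mu)\bigr)+m_\st^+(s,\mu)-4=m_\st^+(s,\mu)-m_\st^-(s,\mu),
\]
which is \eqref{eq3:indexformula}. The identity $\Omega_F(\mu)=\C\setminus\sigma_{\mathrm{disp},\mu}(\L_\eta)$ was already recorded before the statement via the reformulation of the $\C^4$-eigenvalue problem for $M_{\pm,\mu}(s)$ as the quadratic eigenvalue problem for $D_{\pm}(\nu,\mu)$ in \eqref{eq3:dispersionset}, so hyperbolicity need not be revisited.

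The only point requiring care is bookkeeping: handling the matching condition at $x=0$ and getting the sign right in the adjoint/cokernel count, so that the index comes out as $m_\st^+-m_\st^-$ and not its negative. Since every ingredient is piecewise constant, all of the dichotomy input is elementary and explicit, and there is no analytic obstacle beyond invoking the standard framework.
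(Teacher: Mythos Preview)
Your proposal is correct and follows precisely the standard route the paper invokes: the paper does not give a proof but simply cites \cite[Lemma 3.1.10]{KapitulaPromislow}, \cite{Palmer88}, \cite[Sec.~3]{Sandstede02} and then offers an intuitive degrees-of-freedom count in the subsequent Remark. Your argument is exactly the fleshed-out version of that citation---reduction to the first-order companion system, Palmer's dichotomy criterion for the Fredholm alternative, and the explicit kernel/cokernel count via $E^{\mathrm u}_-(s)\cap E^{\mathrm s}_+(s)$---so there is nothing to add.
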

\begin{remark} An intuitive argument for the formula \eqref{eq3:indexformula} is as follows.
The Fredholm index
measures the degrees of freedom of a linear problem minus the number of constraints. In this case there are $m_{\st}^+(s,\mu)$ forward decaying modes and
$m -m_{\st}^-(s,\mu)$ ($m=\dim(w)=4$)  backward decaying modes, adding up to
$m_{\st}^+(s,\mu)+m -m_{\st}^-(s,\mu)$ degrees of freedom. The condition that
these modes fit together at the origin provides $m$ constraints
which then leads to the index formula \eqref{eq3:indexformula}.
\end{remark}

\begin{figure}[h!]
\begin{minipage}[t]{0.49\textwidth}
\centering
\includegraphics[scale=1]{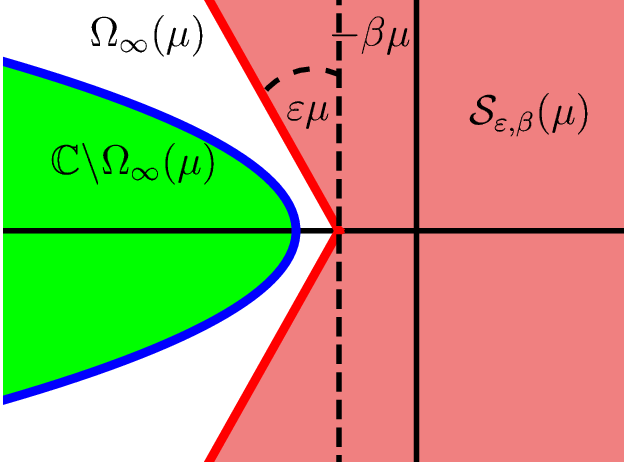}
\end{minipage}
\begin{minipage}[t]{0.49\textwidth}
\centering
\includegraphics[scale=1]{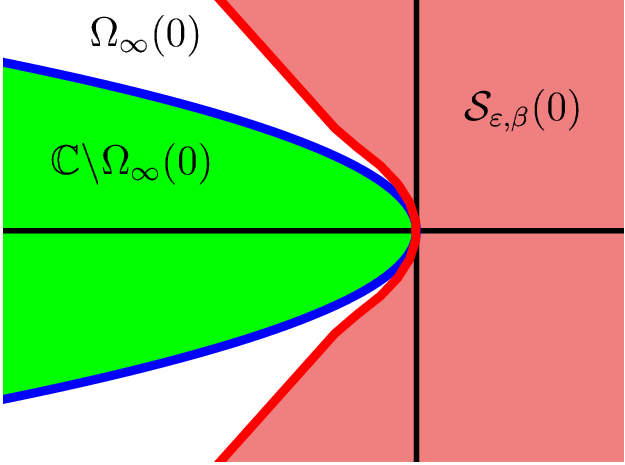}
\end{minipage}
\caption{The sectors $\mathcal{S}_{\varepsilon, \beta}(\mu)$, $\mu > 0$ (left) and $\mathcal{S}_{\varepsilon, \beta}(0)$ (right) from Theorem \ref{thm4.17}.}
\label{sectorials}
\end{figure}

Next we show how the Fredholm index $0$ domain extends into the left half plane.

\begin{theorem} \label{thm4.17}
  Let Assumption \ref{A1}, \ref{A2} and \ref{A4} be satisfied and let $\mu_2$ be given
  by Lemma \ref{aprioriest}. Then there exist constants $\mu_0 \in (0,\mu_2)$ and $ \beta, \varepsilon, \kappa > 0$ such that for each $0\le \mu \le \mu_0$ the open
  set
  $\Omega_F(\mu)$ has a unique connected component $\Omega_\infty(\mu)$  satisfying
  \begin{equation} \label{eq3:sectorinomega}
    \begin{aligned}
      \S_{\varepsilon,\beta}(\mu) :=& \, \big\{ s \in \C: | \arg (s + \beta \mu) | \le \frac{\pi}{2} + \varepsilon \mu \big\} \subset \Omega_\infty(\mu),\quad
      \text{if} \quad \mu>0, \\
      \S_{\varepsilon,\beta}(0) :=& \, \big\{ s \in \C: \Re s \ge -
      \kappa \min( |\Im s|,\beta)^2 + \varepsilon\min(\beta - |\Im s|,0) \big\} \subset \Omega_\infty(0), \quad \text{if}\quad \mu=0.
      \end{aligned}
  \end{equation}
  Moreover, $\Omega_{\infty}(\mu)$ has the properties
\begin{enumerate}[i)]
\item For all $s \in \Omega_\infty(\mu)$ the operator $sI - \L_\eta: Y_\eta \rightarrow X_\eta$ is Fredholm of index $0$.
\item $\sigma_{\mathrm{ess}}(\L_\eta) \subseteq \C \backslash \Omega_\infty(\mu)$.
\end{enumerate}
\end{theorem}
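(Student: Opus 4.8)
The plan is to reduce everything to the explicit dispersion sets of \eqref{eq3:dispersionset} together with the Fredholm description from Lemma \ref{lemma4.14} and Lemma \ref{FredholmLetainfty}. Recall that for $s\in\Omega_F(\mu)=\C\setminus\sigma_{\mathrm{disp},\mu}(\L_\eta)$ the operator $sI-\L_\eta:Y_\eta\to X_\eta$ is Fredholm of index $m_{\st}^+(s,\mu)-m_{\st}^-(s,\mu)$, and this index is locally constant on $\Omega_F(\mu)$ because the stable dimensions of $M_{\pm,\mu}(s)$ do not jump on connected components of the hyperbolicity set $\Omega_F(\mu)$. Hence, once I have produced a connected subset $\S_{\varepsilon,\beta}(\mu)\subset\Omega_F(\mu)$, I let $\Omega_\infty(\mu)$ be the connected component of $\Omega_F(\mu)$ containing it; uniqueness is automatic since $\S_{\varepsilon,\beta}(\mu)$ is connected. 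Property (ii) then follows from \eqref{eq2:defessential}: any $s\in\Omega_\infty(\mu)$ makes $sI-\L_\eta$ Fredholm of index $0$ (once (i) is in place), so $s\notin\sigma_{\mathrm{ess}}(\L_\eta)$.

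For the index value I would evaluate at a large real $s=t>0$, which lies in $\S_{\varepsilon,\beta}(\mu)\cap\Omega_0$. For $\Re s$ large the eigenvalues $\lambda$ of the $4\times4$ matrices $M_{\pm,\mu}(s)$ satisfy $\lambda^2\approx sA^{-1}$, so two of them lie in each open half plane since $\Re\alpha>0$; thus $m_{\st}^+=m_{\st}^-=2$ and the index is $0$. (Alternatively, combining the resolvent bounds of Lemma \ref{aprioriest} with sectoriality gives bijectivity of $tI-\L_\eta$ there.) By local constancy the index is $0$ on all of $\Omega_\infty(\mu)$, which is (i).

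The core is therefore the inclusion $\S_{\varepsilon,\beta}(\mu)\subset\Omega_F(\mu)$, i.e.\ that the curves $\sigma(D_\pm(\nu,\mu))$, $\nu\in\R$, avoid the stated regions uniformly in $0\le\mu\le\mu_0$ (for $\mu=0$, apart from the tangency point $s=0$, which lies on the boundary and in $\sigma_{\mathrm{ess}}$). For the $(-)$-branch the Hermitian part of $D_-(\nu,\mu)$ equals $(-\alpha_1\nu^2+\alpha_1\mu^2+c\mu+g_1(0))I-2i\alpha_2\mu\nu S_1$, whose top eigenvalue is $\le-\alpha_1\nu^2+2|\alpha_2|\mu|\nu|+\alpha_1\mu^2+c\mu+g_1(0)\le\tfrac12 g_1(0)<0$ for small $\mu$ (using $g_1(0)<0$); so this curve sits in $\{\Re s\le\tfrac12 g_1(0)\}$, far to the left. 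For the $(+)$-branch I use $E_\omega=S_\omega+Df(v_\infty)=2|v_\infty|^2\bigl(\begin{smallmatrix}g_1'&0\\g_2'&0\end{smallmatrix}\bigr)$, which follows from $g(|v_\infty|^2)=-S_\omega$ in Lemma \ref{lem:asym} and $Df(u)=g(|u|^2)+2(g'(|u|^2)u)u^\top$; it has eigenvalues $0$ and $p:=2|v_\infty|^2 g_1'(|v_\infty|^2)<0$. Writing $\det(sI-D_+(\nu,0))=z^2-pz+\alpha_2^2\nu^4-\alpha_2 r\nu^2$ with $z=s+\alpha_1\nu^2-ic\nu$ and $r:=2|v_\infty|^2 g_2'$, one checks that a value $s\in i\R$ with $\nu\neq0$ would force $\alpha_1 p+\alpha_2 r=\nu^2|\alpha|^2>0$, contradicting Assumption \ref{A4} (which states $\alpha_1 p+\alpha_2 r<0$). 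Hence the $(+)$-curve meets $i\R$ only at $s=0$, and the branch through $0$ expands as $s_0(\nu)=ic\nu-\gamma\nu^2+O(\nu^3)$ with $\gamma=(\alpha_1 g_1'+\alpha_2 g_2')/g_1'>0$ (the cross term in second-order perturbation theory drops by bi-orthogonality of the left and right eigenvectors). This is the left-curving quadratic contact: choosing $\kappa<\gamma/(3c^2)$ and $\beta,\varepsilon$ small, $\S_{\varepsilon,\beta}(0)$ stays left of all three curves — for $|\nu|$ small by the expansion, for $|\nu|$ large since $\Re\sigma(D_\pm(\nu,0))\sim-\alpha_1\nu^2$, and on a compact annulus by continuity and the strict negativity just established. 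For $\mu>0$ all these quantities vary continuously with $\mu$, the critical eigenvalue of $D_+(0,\mu)$ sits at $-c\mu+\gamma\mu^2+O(\mu^3)$, and a discriminant estimate shows that the shifted sector $\{|\arg(s+\beta\mu)|\le\tfrac\pi2+\varepsilon\mu\}$ with $0<\beta<c$ misses the perturbed curves; shrinking $\mu_0$ keeps all the strict inequalities uniform.

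I expect the main obstacle to be precisely this uniform-in-$\mu$ bookkeeping near the degenerate point $(\nu,\mu)=(0,0)$, where the critical $(+)$-eigenvalue passes from touching the origin at $\mu=0$ to having real part $\approx-c\mu$ for $\mu>0$: one must simultaneously control the $O(\nu^3)$ and $O(\mu^3)$ remainders, the opening $\varepsilon\mu$ of the sector, and the crossover between the small-$\nu$ quadratic regime and the large-$\nu$ parabolic regime, all with constants independent of $\mu\in[0,\mu_0]$. Everything else — the reduction through Lemmas \ref{lemma4.14} and \ref{FredholmLetainfty}, the index computation, and the passage to $\sigma_{\mathrm{ess}}$ — is routine once that geometric inclusion is secured.
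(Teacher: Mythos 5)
Your proposal follows the paper's own strategy closely: reduce to the dispersion set via Lemmas \ref{lemma4.14} and \ref{FredholmLetainfty}, verify that the $(\pm)$-branches of $\sigma_{\mathrm{disp},\mu}(\L_\eta)$ avoid the sector by combining a Taylor expansion of the critical eigenvalue near $(\nu,\mu)=(0,0)$ with large-$|\nu|$ asymptotics and a compactness argument in between, then compute the Fredholm index $0$ at large real $s$ by a scaling of $M_{\pm,\mu}(s)$. The one genuinely different sub-step is how you show the $(+)$-curve avoids $i\R\setminus\{0\}$ at $\mu=0$: you write $\det(sI-D_+(\nu,0))$ as a quadratic in $z=s+\alpha_1\nu^2-ic\nu$ and derive, from the hypothesis $s\in i\R$, $\nu\neq 0$, the identity $\alpha_1 p+\alpha_2 r=|\alpha|^2\nu^2>0$, a direct contradiction with Assumption \ref{A4}. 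The paper instead works with the explicit eigenvalue formula \eqref{eq3:D+eigenvalues} and proves the stronger statement that $r(\nu)=\Re s_+(\nu,0)$ has a unique global maximum at $\nu=0$ by analyzing the sign of $r'$. Your contradiction argument is shorter and delivers exactly what is needed for the sector inclusion (strict negativity for $\nu\neq 0$, together with continuity and the asymptotics); the paper's monotonicity analysis is more informative but not required here. The remaining minor variant — bounding $\Re\sigma(D_-(\nu,\mu))$ via the Hermitian part rather than via the closed-form eigenvalue — is cosmetic. All the constants ($\gamma$, the quadratic coefficient, $s_+(0,\mu)\approx -c\mu+\gamma\mu^2$) check out against \eqref{eq3:Taylors+}, and your identification of the main delicate point (uniform-in-$\mu$ control of the remainders near the degenerate point) is exactly where the paper spends its care with the absorption of the mixed $O(|\mu||\nu|)$ term.
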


  Figure \ref{Figure:Fred} illustrates the spectral behavior for $\mu=0$, $\L = \L_\eta$ in case of
  the quintic Ginzburg Landau equation \eqref{QCGL} with $\alpha=1$, $\beta_1= -\frac{1}{8}$, $\beta_3=1+i$, $\beta_5= -1 +i$. The dispersion set
  $\sigma_{\mathrm{disp},0}(\L)$ consists of $4$ parabola-shaped curves. They
  originate from
  purely imaginary eigenvalues of $M_{-,0}(s)$ (red) and of 
  $M_{+,0}(s)$ (blue). Note that one of the latter curves has quadratic contact
  with the imaginary axis. The numbers in the connected components
  of $\C\setminus \sigma_{\mathrm{disp},0}(\L)$ denote the Fredholm index as calculated
  from \eqref{eq3:indexformula}. The white components have index $0$ with
  $\Omega_{\infty}(0)$ being the rightmost component. The essential spectrum
  $\sigma_{\mathrm{ess}}(\L)$ (see \eqref{eq2:defessential}) is colored green.
  Every $\mu >0$ sufficiently small shifts the spectrum to the left
  (by $\approx -c \mu$) which allows to inscribe a proper
  sector with tip at $-\beta \mu$ into the unbounded Fredholm $0$ component $\Omega_{\infty}(\mu)$. In case $\mu=0$ the sector is rounded quadratically 
  for $|\Im s| \le \beta$; see Figure \ref{sectorials}.

\begin{figure}[h!]
\centering
\begin{minipage}[t]{0.99\textwidth}
\centering
\includegraphics[scale=1]{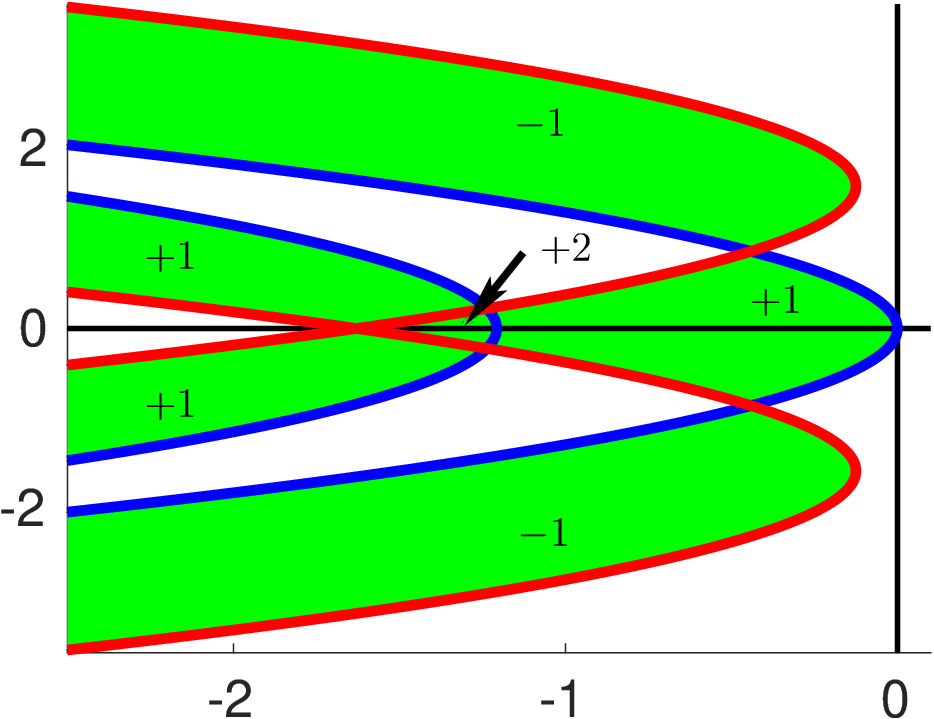}
\caption{Essential spectrum of the linearized operator $\L$ (green) and
  dispersion set (red/blue) for \eqref{QCGL} with
  $\alpha=1$, $\beta_1= -\frac{1}{8}$, $\beta_3=1+i$, $\beta_5= -1 +i$. The numbers in the connected components indicate the Fredholm index of $sI - \L$,
  white regions have Fredholm index $0$.}\label{Figure:Fred}
\end{minipage}
\end{figure}

\begin{proof} We show  $\S_{\varepsilon,\beta}(\mu) \subseteq 
  \C \setminus \sigma_{\mathrm{disp},\mu}(\L_\eta)=\Omega_F(\mu)$, so that \eqref{eq3:sectorinomega}
  follows by taking $\Omega_{\infty}(\mu)$ to be the connected component
  of $\Omega_F(\mu)$ which contains this sector.
  From  \eqref{SystemDef} one finds
  \begin{align} \label{eq3:fderiv}
    Df(v) = g(|v|^2) + 2 g'(|v|^2) \begin{pmatrix} v_1^2 & v_1 v_2 \\
      v_1 v_2 & v_2^2 \end{pmatrix}.
  \end{align}
 From \eqref{eq3:limits},\eqref{eq3:dispersionset} we obtain
  \begin{align*}
    D_{-}(\nu,\mu)= - \nu^2 A + i \nu(cI-2 \mu A) + S_{\omega} + g(0) +
    \mu^2 A + c \mu I.
  \end{align*}
  The dispersion set contains the two curves of eigenvalues $s(\nu,\mu)$ and
  $\overline{s(\nu,\mu)}$ of $D_{-}(\nu,\mu)$ given for $\nu \in \R$ by
  \begin{align*} 
    s(\nu,\mu)= (\mu^2- \nu^2) \alpha_1 + 2 \nu \mu \alpha_2 + g_1(0) + c \mu
    +i \big((\mu^2- \nu^2) \alpha_2 + \nu c - 2 \nu \mu \alpha_1 + \omega +g_2(0)
    \big).
  \end{align*} 
  An elementary discussion shows that $\Re s(\nu,\mu) \le g_1(0) <0$
  for all $\nu \in \R$ and $0 \le \mu \le \frac{c \alpha_1}{|\alpha|^2}$.
  Moreover, for $|\nu|$ large, the values $s(\nu,\mu)$ lie in a sector which has an  opening angle with the negative real axis of at most
  $\arctan(\frac{|\alpha_2|}{\alpha_1}) < \frac{\pi}{2}$
  uniformly for $\mu$ small. Thus there is a sector  $\S_{\varepsilon,\beta}(\mu)$ as above
  which has the two curves in its exterior. Next we obtain from
  \eqref{eq3:limits}, \eqref{eq3:dispersionset}, \eqref{eq3:fderiv}
  and Lemma \ref{lem:asym}
  \begin{equation} \label{eq3:D+formula}
    \begin{aligned}
    D_+(\nu,\mu)= &\, \begin{pmatrix} \delta_1(\nu,\mu) + 2 \rho_1 & - \delta_2(\nu,\mu) \\
      \delta_2(\nu,\mu) + 2 \rho_2 & \delta_1(\nu,\mu) \end{pmatrix}, \quad
    \rho_j = g_j'(|v_{\infty}|^2)|v_{\infty}|^2, j=1,2,  \\
     \delta_1(\nu,\mu)=&\, (\mu^2-\nu^2) \alpha_1 + 2 i \nu \mu  \alpha_1
      + i \nu c- c \mu, \quad \delta_2(\nu,\mu) =
      (\mu^2-\nu^2)\alpha_2 + 2 i\nu \mu \alpha_2.
    \end{aligned}
  \end{equation}
  The eigenvalues are
  \begin{align} \label{eq3:D+eigenvalues}
    s_{\pm}(\nu,\mu) = \delta_1(\nu,\mu) + \rho_1 \pm \big(\rho_1^2 -
    2 \delta_2(\nu,\mu) \rho_2 - \delta_2^2(\nu,\mu)\big)^{1/2}.
  \end{align}
  Consider first the case $\mu=0$: 
  \begin{align*}
    s_{\pm}(\nu,0) = - \alpha_1 \nu^2 + \rho_1 + i \nu c \pm R(\nu)^{1/2},
    \quad R(\nu) = |\rho|^2 - (\rho_2 - \alpha_2 \nu^2)^2.
  \end{align*}
  For $ \nu \to \infty$ we obtain $\Re s_{\pm}(\nu,0) \sim - \alpha_1 \nu^2$
  and $\Im s_{\pm}(\nu,0) \sim \nu c \pm |\alpha_2|\nu^2$, hence large
  values lie in  a sector opening to the left with angle $< \frac{\pi}{2}$.
  Further, Assumption \ref{A2} yields $\Re s_-(\nu,0) \le -\alpha_1 \nu^2 + \rho_1 \le \rho_1<0$ 
  for all $\nu\in \R$ (independently of the sign in front of $R(\nu)$).
  Next we show that $r(\nu)= \Re s_+(\nu,0)$ has a unique global maximum at $\nu=0$, more precisely,
    \begin{equation} \label{eq3:nubehave}
    r(0)=0, \quad   r''(0) < 0  \quad  r'(\nu)\begin{cases} >0, & \nu \in (-\infty,0) \setminus \{-\nu_0\} , \\
      =0, & \nu=0, \\ < 0, & \nu \in (0,\infty)\setminus \{\nu_0\} , \end{cases}
    \end{equation}
    where $\nu_0> 0$  is defined by
    $ |\alpha_2| \nu_0^2 = |\rho| + \mathrm{sgn}(\alpha_2)\rho_2>0$  for
    $\alpha_2 \neq 0$ and 
     $\nu_0=\infty$ in case $\alpha_2=0$. Note that  $|\nu|> \nu_0$ holds
    if and only if $R(\nu) <0$, and $r(\nu)=-\alpha_1 \nu^2+\rho_1$ for $|\nu|> \nu_0$.
    Moreover, if $\alpha_2=0$ then 
     we have $r(\nu)=-\alpha_1 \nu^2$ . Thus \eqref{eq3:nubehave} holds in both cases.
    It remains to consider $r(\nu)$ for $|\nu| < \nu_0$ and $\alpha_2 \neq 0$.
    We obtain $R(0)=|\rho_1|^2$, $r(0)=0$ and 
    \begin{align*}
      r'(\nu) = &\, 2 \nu \big( -\alpha_1 +
      R(\nu)^{-1/2} \alpha_2 (\rho_2- \alpha_2
      \nu^2)\big).
    \end{align*}
    This shows $r'(0)=0$ and 
    $|\rho_1|r''(0)=2(-\alpha_1|\rho_1|+\alpha_2 \rho_2) <0$
    by Assumption \ref{A4}. If $r'$ has a further zero for $|\nu|<\nu_0$
    then this implies $\alpha_2(\rho_2 - \alpha_2 \nu^2) >0$, $\mathrm{sgn}(\alpha_2)= \mathrm{sgn}(\rho_2)$, and  
    $|\alpha|^2(\rho_2- \alpha_2 \nu^2)^2= \alpha_1^2 |\rho|^2$.
    By these sign conditions we have a unique square root given by
\begin{align*}
  |\alpha|( \rho_2 - \alpha_2 \nu^2) = \mathrm{sgn}(\alpha_2) \alpha_1 |\rho|, \quad \text{or} \quad |\alpha||\alpha_2|\nu^2 =
  |\alpha| |\rho_2|  - \alpha_1|\rho|.
\end{align*}
But the last equation has no real solution $\nu$ since our Assumptions
\ref{A1}-\ref{A4} and $\alpha_2 \rho_2 >0$ imply
\begin{align*}
  |\alpha|^2 \rho_2^2-\alpha_1^2 |\rho|^2  =\alpha_2^2 \rho_2^2 -
  \alpha_1^2 \rho_1^2=( \alpha_2 \rho_2-\alpha_1 \rho_1)( \alpha_2 \rho_2+\alpha_1 \rho_1 ) <0.
\end{align*}
Since $r'$ has no further zeros in $(-\nu_0,0)\cup (0,\nu_0)$ the sign of $r'$
is determined by $r''(0)<0$.

Now consider $s_{\pm}(\nu,\mu)$ for small values $\mu>0$. For large values
of 
$|\nu| $ the asymptotic behavior is only slightly modified to $\Re s_{\pm}(\nu,\mu) \sim - \alpha_1 \nu^2$
and $\Im s_{\pm}(\nu,\mu) \sim \nu(c+ 2 \mu \alpha_1) \pm |\alpha_2|\nu^2$,
and we find a proper sector enclosing the dispersion curves for $|\nu|\ge \nu_1$
uniformly for $0\le \mu \le \mu_1$.  The curve
$s_-(\nu,0)$, $|\nu| \le \nu_1$ is bounded away from the imaginary axis, hence
it suffices to consider $s_+(\nu,\mu)$. From \eqref{eq3:D+formula},
\eqref{eq3:D+eigenvalues} one computes the partial derivatives
 $s_{+,\nu}(0,0)=ic$, $s_{+,\mu}(0,0)= - c$, $s_{+,\nu \nu}(0,0)=2 |\rho_1|^{-1}(\rho_1 \alpha_1 + \rho_2 \alpha_2)$ and then by a Taylor expansion
\begin{align} \label{eq3:Taylors+}
  s_+(\nu,\mu) = i c \nu - c \mu + |\rho_1|^{-1}(\rho_1 \alpha_1 + \rho_2 \alpha_2)\nu^2 + \mathcal{O}(|\nu|^3 + |\mu| |\nu| + |\mu|^2 ).
\end{align}
Note that we can estimate $C |\mu| |\nu| \le \frac{C}{4 \delta} \mu^2 + C \delta \nu^2$ and absorb $C \delta$   into the negative pre-factor of $\nu^2$ by taking $\delta$ small, and subsequently absorb $\frac{C}{4 \delta}\mu$ into
the negative pre-factor of $\mu$ by taking $\mu$ small.
Thus we can choose $\beta= \frac{c}{2}$ and determine $\varepsilon,\nu_2,\mu_2>0$ such
that
\begin{align*}
   |\arg (s_+(\nu,\mu) + \beta \mu) | \ge \frac{\pi}{2} + \varepsilon \mu
   \quad \text{for all } \quad |\nu| \le \nu_2, \quad 0 < \mu \le \mu_2.
\end{align*}
Finally, for $\nu_2 \le |\nu| \le \nu_1$, the curve $s_+(\nu,0)$ is bounded
away from the imaginary axis, and our assertion follows by continuity of $s_+$.
For $\mu=0$ the expansion \eqref{eq3:Taylors+} ensures $\Im s_+(\nu,0)= \nu c + \mathcal{O}(|\nu|^3)$ and the existence of some
$\beta>0$ such that
\begin{align*}
  \Re s_+(\nu,0) \le - 2 \kappa |\Im s_+(\nu,0)|^2 \quad \text{for} \quad
  |\Im s_+(\nu,0)| \le \beta, \quad \kappa = \frac{|\rho_1 \alpha_1 +\rho_2 \alpha_2|}{4 c^2 |\rho_1|}.
\end{align*}
The inclusion \eqref{eq3:sectorinomega} then follows for $\varepsilon$
sufficiently small. In addition, we require $\varepsilon \le \kappa \beta$
which implies
\begin{align} \label{eq3:sectormove}
  \S_{\varepsilon,\tilde{\beta}}(0) \subset
  \S_{\varepsilon,\beta}(0) \quad \text{for} \quad
  0 < \tilde{\beta} < \beta.
\end{align}
For the proof of (i) note that the Fredholm index is constant in $\Omega_{\infty}(\mu)$. Therefore, it is enough to 
consider $M_{\pm,\mu}(s)$ for large $s >0$:
\begin{align*}
  \begin{pmatrix} I & 0 \\ 0 & s^{-1/2}I \end{pmatrix} M_{\pm,\mu}(s)
  \begin{pmatrix} I & 0 \\ 0 & s^{1/2} I \end{pmatrix}=
  s^{1/2}\Big[ \begin{pmatrix} 0 & I \\ A^{-1} & 0 \end{pmatrix}
    + s^{-1/2} \begin{pmatrix} 0 & 0 \\ - A^{-1} C_{\pm,\mu} &
      - A^{-1} B_{\pm,\mu} \end{pmatrix} \Big].
\end{align*}
The leading matrix $\begin{pmatrix} 0 & I \\ A^{-1} & 0 \end{pmatrix}$
has a two-dimensonal stable subspace which belongs to the  eigenvalues $- \alpha^{-1/2},-\bar{\alpha}^{-1/2}$,
and a two-dimensional unstable subspace which belongs to $\alpha^{-1/2},\bar{\alpha}^{-1/2}$.
These  subspaces are only slightly perturbed for large $s$, and hence we obtain the 
Fredholm index $0$ from Lemma \ref{FredholmLetainfty}.

We conclude the proof by noting that assertion (ii) follows from  
our previous result and the definition of the essential spectrum,
cf. \eqref{eq2:defessential}.      
\end{proof}
We continue with the
\begin{proof}[Proof of Lemma \ref{lemma4.18}]
  
  Clearly, Theorem \ref{decay} ensures that the functions $\varphi_1$, $\varphi_2$ from \eqref{eq2:eigenfunctions} lie in every $Y_{\eta}$, where $\eta$ is defined by \eqref{eta} and $0 \le \mu < 2 \hat{\mu}$, and
   both are eigenfunctions of $\L_\eta$ defined in 
  \eqref{calL}. Recall that $E_{\omega}=S_{\omega}+ Df(v_{\infty})=2
  \begin{pmatrix} \rho_1 & 0 \\ \rho_2 & 0 \end{pmatrix}$ (see
  \eqref{eq3:D+formula} for $\rho_1,\rho_2$) has the eigenvalues $0$ and $ 2 \rho_1 <0$ with
  eigenvectors $(0,|v_{\infty}|)^{\top}= S_1 v_{\infty}$ and $(\rho_1,\rho_2)^{\top}$, respectively.
  Now consider $(v,\zeta)^{\top} \in Y_{\eta}$ such that $\L_\eta (v,\zeta)^\top=0$.
  From $E_{\omega}\zeta = 0$ we obtain
  $\zeta  = z S_1 v_{\infty}$ for some $z \in \C$. This shows $(u,0):= (v,\zeta)^{\top}
  - z \varphi_2 \in \ker(\L_\eta)$ and $u = y v_{\star}'$ for some $y \in \C$ by
  Assumption \ref{A5}. Therefore, we obtain $(v,\zeta)^{\top}=
  z \varphi_2 + y \varphi_1$.
  Now suppose $\L_\eta (v,\zeta)^{\top}= y \varphi_1 + z \varphi_2$ for some
  $y,z \in \C$. Since $E_{\omega}$ has only simple eigenvalues we
  conclude $z=0$ from the second component and then $y=0$ from Assumption
  \ref{A5}. This proves \eqref{eq2:eigenfunctions}.

  From Theorem \ref{thm4.17} we infer that $\L_\eta: Y_\eta \rightarrow X_\eta$ is Fredholm of index $0$ and hence also $L_\eta:H^2_{\eta}\to L^2_{\eta}$ by Lemma
  \ref{lemma4.14}. Assumption \ref{A5} guarantees that $0$ is a simple
  eigenvalue of $L_\eta:H^2_{\eta}\to L^2_{\eta}$. Note that we have
  $v'_{\star} \in H^2_{\eta}$ and that there is no solution $y \in H^2_{\eta}$ of
  $L_\eta y = v'_{\star}$ since this implies $y \in H^2$ and $y\in \ker(L^2)=\ker(L)$.
  Simple eigenvalues are known to be isolated. This may be seen by applying
  the inverse function theorem to
  \begin{equation*}
    T:H^2_{\eta}\times \R \to L^2_{\eta}\times \R, \quad
    T\begin{pmatrix} v \\ s \end{pmatrix} =
    \begin{pmatrix} L_\eta v - sv \\ (v'_{\star},v-v'_{\star})_{L^2}
    \end{pmatrix} \; \text{at} \; \begin{pmatrix} v \\ s \end{pmatrix}=
    \begin{pmatrix} v'_{\star} \\ 0 \end{pmatrix}.
  \end{equation*}
    Therefore, there exists some $s_0=s_0(\mu)>0$ such that 
    $L_\eta :H^2_{\eta}\to L^2_{\eta}$ has no eigenvalues with $|s|\le s_0$ except
    $s=0$.

Finally, we prove assertion (ii) for
$\beta_0= \min(\frac{\varepsilon s_0}{2}, \beta_E,\tilde{\beta}, |\rho_1|)$ where $\beta_E$ is from  Assumption
\ref{A5} and $\beta$, $\tilde{\beta}$ satisfy \eqref{eq3:sectorinomega}, \eqref{eq3:sectormove} as well as
$\tilde{\beta} < \min(\beta,\frac{\sqrt{3}-1}{2}s_0)$.
Consider $s \in \sigma_{\mathrm{pt}}(\L_\eta) \setminus \{0\}$ with
$\Re s \ge - \beta_0$ and eigenfunction $(v,\zeta)^{\top}\in Y_{\eta}$.
From $\sigma(E_{\omega})=\{0, 2 \rho_1\}$ we obtain $\zeta=0$ and thus $L_\eta v=s v$, $v \in H^2_{\eta}$.  We claim that $s \in \S_{\varepsilon,\tilde{\beta}}(0)$.
For $\Re s \ge 0$ this is obvious, while for
$0 > \Re s \ge -\beta_0 \ge - \frac{s_0}{2}$ this follows from
\begin{align*}
  |\Im s|\ge (s_0^2 - (\Re s)^2)^{1/2} \ge \frac{\sqrt{3}}{2}s_0 > \tilde{\beta}+ \frac{s_0}{2}, \quad \Re s \ge -\frac{\varepsilon s_0}{2} > - \kappa \tilde{\beta}^2 + \varepsilon(\tilde{\beta}- |\Im s|).
\end{align*}
By Theorem \ref{thm4.17},  $s \in \Omega_{\infty}(0)$ and 
  $s I - \L_\eta: Y_\eta \to X_\eta$ with $\mu = 0$ is Fredholm of index $0$.
By Lemma \ref{lemma4.14} the same holds for $sI-L:H^2\to L^2$ and we have
shown $s \in \sigma_{\mathrm{pt}}(L)$. This contradicts Assumption \ref{A5} since $\Re s \ge - \beta_E$.
\end{proof}

So far we determined the spectral properties of the linearized operator $\L_\eta$
and proved spectral stability of the extended system \eqref{CP} posed on the exponentially weighted spaces $X_\eta$ for positive but small $\mu > 0$. In particular, the essential spectrum of $\L_\eta$ is included in the left half plane as well as its point spectrum except the zero eigenvalue which has algebraic and geometric multiplicity $2$, i.e. $\ker (\L_\eta) = \ker (\L_\eta^2) = \mathrm{span} \{ \varphi_1, \varphi_2 \}$. Since $\L_\eta$ is Fredholm of index $0$ the same holds true for the (abstract) adjoint operator $\L_\eta^*: \D(\L_\eta^*) \subset X_\eta \rightarrow X_\eta$ and there are two normalized adjoint eigenfunctions $\psi_1, \psi_2 \in \D(\L^*)$ with
\begin{align} \label{adjointEF}
	\ker (\L_\eta^*) = \mathrm{span} \{ \psi_1,\psi_2 \}, \quad (\psi_i, \varphi_j)_{X_\eta} = \delta_{ij}, \quad i,j = 1,2.
\end{align} 
We define the map
\begin{align} \label{projector}
	P_\eta: X_\eta \rightarrow X_\eta, \quad \v \mapsto (\psi_1,\v)_{X_\eta} \varphi_1 + (\psi_2,\v)_{X_\eta} \varphi_2.
\end{align}
Then $P_\eta$ is a projection onto $\ker (\L_\eta)$ and $X_\eta$ can be decomposed into
\begin{align*}
	X_\eta = \ran (P_\eta) \oplus \ran (I - P_\eta) = \ker (\L_\eta) \oplus \ker(\L_\eta^*)^\bot.
\end{align*}
The subspace $\ker (\L_\eta^*)^\bot$ is invariant under $\L_\eta$ and we introduce
its intersection with the smooth spaces $X^k_\eta$, $k = 1,2$
\begin{align*}
	V_\eta := \ker (\L_\eta^*)^\bot \subset X_\eta, \quad V^k_\eta := V_\eta \cap X^k_\eta.
\end{align*}  

\sect{Semigroup estimates}

In the previous section we studied the spectrum of the linearized operator $\L_\eta$
on exponentially weighted spaces for positive but small $\mu > 0$ and derived a-priori estimates for the resolvent equation \eqref{resolvGl}.
Theorem \ref{thm4.17} shows that there is no essential spectrum
in the Fredholm $0$ component $\Omega_{\infty}(\mu)$ and thus also not in the sector 
$\S_{\varepsilon, \beta} (\mu)$. When combined with Lemma \ref{aprioriest}
we obtain $\Omega_0 \subset \mathrm{res}(\L_\eta)$ for the domain $\Omega_0$
from \eqref{Omega0}. Further,
Lemma \ref{lemma4.18} shows
that the nonzero point spectrum is bounded away from the imaginary axis. 
Thus we conclude from  Lemma \ref{aprioriest} that $\L_\eta$ is a sectorial operator. By the classical semigroup theory,
(see \cite{Henry}, \cite{Miklavcic}, \cite{Pazy}) the operator $\L_\eta$ generates an analytic semigroup
$\{ e^{t\L_\eta} \}_{t \ge 0}$ on $X_\eta$ such that for any $\delta >0$ there exists a constant $C_{\delta}$ with  $\| e^{t\L_\eta} \u \|_{X_\eta} \le C_{\delta} e^{\delta t} \| \u \|_{X_\eta}$, $t\ge 0$. Next we avoid the neutral modes of $\{ e^{t\L_\eta} \}_{t \ge 0}$ and  restrict  $\L_\eta$
to $V_\eta^2$ in order to have exponential decay.

\label{sec4}
\begin{theorem} \label{semigroup}
  Let Assumption \ref{A1}, \ref{A2}, \ref{A4} and \ref{A5} be satisfied and let $0 < \mu \le \min(\mu_0,\mu_1)$ with $\mu_0$ from Theorem \ref{thm4.17} and $\mu_1$ from Lemma \ref{lemma4.18}. Then the linearized operator $\L_\eta: Y_\eta \rightarrow X_\eta$ generates an analytic semigroup $\{ e^{t \L_\eta}\}_{t \ge 0}$ on $X_\eta$. Moreover, there exist $K = K(\mu) \ge 1$ and $\nu = \nu(\mu) > 0$ such that for all $t \ge 0$ and $\w \in V^\ell_\eta$, $\ell = 0,1$ the following estimate holds
\begin{align} \label{semigroupest}
	\| e^{t\L_\eta} \w \|_{X^\ell_\eta} \le K e^{-\nu t} \| \w \|_{X^\ell_\eta}.
\end{align}
\end{theorem}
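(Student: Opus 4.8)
The plan is to show that $\L_\eta$ is sectorial on each of the spaces $X^\ell_\eta$, $\ell=0,1$, to locate its spectrum, to split off the neutral eigenvalue $0$ by the spectral projection $P_\eta$, and then to invoke the spectral mapping theorem for analytic semigroups on the invariant complement $V^\ell_\eta$. For the sectoriality, Lemma \ref{aprioriest} already gives that $\L_\eta$ is closed and densely defined, that $\Omega_0\subset\mathrm{res}(\L_\eta)$, and that \eqref{resolest1}, \eqref{resolest3} provide $\|(sI-\L_\eta)^{-1}\|_{X^\ell_\eta\to X^\ell_\eta}\le C/|s|$ for $s\in\Omega_0$. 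For $\ell=1$ one first checks, by differentiating the resolvent equation \eqref{resolvGl} and using $Df(v_\star)\in C^1_b$, that $(sI-\L_\eta)^{-1}$ maps $X^1_\eta$ into $X^3_\eta$; hence it is well defined and, being closed, bounded on $X^1_\eta$, so that $\sigma_{X^1_\eta}(\L_\eta)\subseteq\sigma_{X_\eta}(\L_\eta)$. Fixing a sector $\Sigma=\{s:|\arg(s-a)|\le\frac{\pi}{2}+\delta\}$ with $a>0$ and $\delta>0$ small, its part $\{|s-a|\ge 2R_0\}$ lies in $\Omega_0$, while $\{s\in\Sigma:|s-a|\le 2R_0\}$ is a compact subset of $\mathrm{res}(\L_\eta)$ (established in the next paragraph, for $a$ small enough) on which the resolvent is bounded by continuity; patching the two bounds yields $\|(sI-\L_\eta)^{-1}\|\le M/|s-a|$ on $\Sigma$. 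Thus $\L_\eta$ is sectorial on $X^\ell_\eta$ and generates there an analytic semigroup, which by consistency on the common core $X^3_\eta$ coincides with $e^{t\L_\eta}$.

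Next we locate the spectrum and identify the projection $P_\eta$. Theorem \ref{thm4.17}(ii) puts the essential spectrum into $\C\setminus\Omega_\infty(\mu)\subseteq\{\Re s\le-\beta\mu\}$, and Lemma \ref{lemma4.18}(ii) gives $\Re s<-\beta_1$ on $\sigma_{\mathrm{pt}}(\L_\eta)\setminus\{0\}$, so altogether $\sigma(\L_\eta)\setminus\{0\}\subseteq\{\Re s\le-\nu_0\}$ with $\nu_0:=\min(\beta\mu,\beta_1)>0$. Hence $0$ is isolated in $\sigma(\L_\eta)$, and since $\ker(\L_\eta)=\ker(\L_\eta^2)$ it is a first-order pole of the resolvent whose Riesz projection has range $\ker(\L_\eta)=\mathrm{span}\{\varphi_1,\varphi_2\}$ and kernel $V_\eta$; that is, it equals $P_\eta$ from \eqref{projector}. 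Since $P_\eta$ commutes with $\L_\eta$ (because $\psi_i\in\ker\L_\eta^*$) and, $\L_\eta$ being Fredholm of index $0$, $\mathrm{ran}(\L_\eta)=\ker(\L_\eta^*)^\perp=V_\eta$, the restriction $\L_\eta|_{V_\eta}\colon V^2_\eta\to V_\eta$ is sectorial with resolvent set containing $\mathrm{res}(\L_\eta)\cup\{0\}$, so $\sigma(\L_\eta|_{V_\eta})\subseteq\{\Re s\le-\nu_0\}$; the same holds on $V^1_\eta$, on which $P_\eta$ restricts to a bounded projection.

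Finally, recall that analytic semigroups are norm-continuous for $t>0$ and therefore satisfy the spectral mapping theorem; hence the growth bound of $e^{t\L_\eta}\big|_{V^\ell_\eta}$ equals its spectral bound $\sup\{\Re s:s\in\sigma(\L_\eta|_{V^\ell_\eta})\}\le-\nu_0<0$. Consequently, for any $\nu\in(0,\nu_0)$ there exists $K=K(\mu)\ge1$ with $\|e^{t\L_\eta}\w\|_{X^\ell_\eta}\le Ke^{-\nu t}\|\w\|_{X^\ell_\eta}$ for all $t\ge0$ and $\w\in V^\ell_\eta$, $\ell=0,1$, which is \eqref{semigroupest}. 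For $\ell=1$ an alternative is to combine the uniform boundedness of $e^{t\L_\eta}$ on $X^1_\eta$ over $[0,1]$ with the $X_\eta$-decay obtained in the case $\ell=0$ and the analytic smoothing bound $\|e^{\L_\eta}\|_{X_\eta\to Y_\eta}<\infty$, using $\|\cdot\|_{X^1_\eta}\le\|\cdot\|_{Y_\eta}\le C(\|\L_\eta\cdot\|_{X_\eta}+\|\cdot\|_{X_\eta})$ on $Y_\eta$ and the $\L_\eta$-invariance of $V_\eta$ to keep the argument inside $V_\eta$.

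The main obstacle is not a single sharp estimate — the resolvent bounds and the spectral picture are already in hand — but the bookkeeping needed to run the argument \emph{simultaneously on the two scales} $X_\eta$ and $X^1_\eta$: one must check that the resolvent, the projection $P_\eta$, and the invariant splitting $X^\ell_\eta=\mathrm{ran}(P_\eta)\oplus V^\ell_\eta$ are consistent for $\ell=0$ and $\ell=1$ (this is precisely where \eqref{resolest3} and the elliptic bootstrap of \eqref{resolvGl} are used), and that the spectral bound of the restricted generator really controls the decay rate, for which one relies on the spectral mapping theorem for analytic semigroups.
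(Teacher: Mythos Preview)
Your proposal is correct and follows essentially the same route as the paper: establish sectoriality from Lemma~\ref{aprioriest}, locate the spectrum via Theorem~\ref{thm4.17} and Lemma~\ref{lemma4.18}, restrict to the invariant complement $V_\eta$ where $0$ is removed from the spectrum, and read off the exponential decay. The only differences are in the packaging of the last two steps. The paper verifies directly that $\ker(\L_{V_\eta})=\{0\}$, $\ran(\L_{V_\eta})=V_\eta$ (hence $0\in\mathrm{res}(\L_{V_\eta})$), then inscribes a sector $\Sigma_{\varepsilon,\nu}$ with tip at $-\nu<0$ into $\mathrm{res}(\L_{V_\eta})$ and obtains the decay from the contour integral representation of the semigroup along $\partial\Sigma_{\varepsilon,\nu}$. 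You instead identify $P_\eta$ with the Riesz projection at $0$ (using $\ker(\L_\eta)=\ker(\L_\eta^2)$ and Fredholm index $0$) and invoke the equality of growth bound and spectral bound for analytic semigroups. Both arguments are standard and yield the same conclusion; your version has the small advantage of making explicit why $P_\eta$ is the ``right'' projection, while the paper's shifted-sector argument is slightly more self-contained in that it produces the resolvent bound on $X^\ell_\eta$ directly rather than appealing to the spectral mapping theorem.
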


\begin{proof}
  The first assertion follows by the arguments above. Thus it remains to show the estimate \eqref{semigroupest}. For that purpose, we note that the restriction $\L_{V_\eta}$ of $\L_\eta$ to $V_\eta$ is a closed operator on $V_\eta$ with $\ker(\L_{V_\eta}) = \{ 0 \}$ and $\ran (\L_{V_\eta}) = V_\eta$. Thus $\L_{V_\eta}$ is Fredholm of index $0$ and $0 \notin \sigma ( \L_{V_\eta} )$. Moreover, the projector $P_\eta$ from
  \eqref{projector} commutes with $\L_\eta$ which leads to
  $\mathrm{res}(\L_\eta) \subset \mathrm{res}( \L_{V_\eta} )$.
  Therefore, by Theorem \ref{thm4.17}, Lemma \ref{lemma4.18} and Lemma \ref{aprioriest} we find $\varepsilon = \varepsilon(\mu)$, $\nu = \nu(\mu)$ and a sector $\Sigma_{\varepsilon,\nu} = \{ s \in \C: \mathrm{arg}(s + \nu) \le \frac{\pi}{2} + \varepsilon \} $ such that $\Sigma_{\varepsilon,\nu} \subset \mathrm{res}(\L_{V_\eta})$.
  Further  we can decrease $\varepsilon>0$ and take $R > 0$ sufficiently large so that $\Sigma_{\varepsilon,\nu} \cap \{|s|\ge R\} \subseteq \Omega_0$.
  From Lemma \ref{aprioriest} and the fact that the resolvent is bounded in a compact
  subset of the resolvent set we then find a constant $C = C(\mu) >0$ such that for all $\w \in V_\eta^{\ell}$ and $\ell = 0,1$ the following holds
\begin{align*}
	\| (sI - \L_{V_\eta})^{-1} \w\|_{X^\ell_\eta} = \| (sI - \L_\eta)^{-1} \w\|_{X^\ell_\eta} \le C \| \w \|_{X^\ell_\eta} \quad & \forall s \in \Sigma_{\varepsilon, \nu} \cap \{ |s| \le R \}, \\
	\| (sI - \L_{V_\eta})^{-1} \w\|_{X^\ell_\eta} = \| (sI - \L_\eta)^{-1} \w\|_{X^\ell_\eta} \le \frac{C}{|s|} \| \w \|_{X^\ell_\eta} \quad & \forall s \in \Sigma_{\varepsilon, \nu} \cap \{ |s| > R \}.
\end{align*}
Therefore, $\L_{V_\eta}$ is a sectorial operator on $V_\eta$ and
the representation of the semigroup
\begin{align*}
  e^{t \L_{V_\eta}}= \int_{\Gamma_{\varepsilon,\nu}} (zI-\L_{V_\eta})^{-1} e^{t z} dz, \quad
  \Gamma_{\varepsilon,\nu} = \{- \nu + r \exp(\mathrm{sgn}(r)i(\tfrac{\pi}{2}+ \varepsilon)) :
  r \in \R \}
\end{align*}
leads in the standard way to the exponential estimate
\begin{align*}
	\| e^{t\L_{\eta}} \w \|_{X^\ell_\eta} = \| e^{t\L_{V_\eta}} \w \|_{X^\ell_\eta} \le K e^{-\nu t} \| \w \|_{X^\ell_\eta}, \quad \w \in V^\ell_\eta,\; \ell=0,1.
\end{align*}
\end{proof}

\sect{Decomposition of the dynamics}
\label{sec5}
The nonlinear operator $\F$ on the right hand side of \eqref{CP} is equivariant w.r.t. the group action $a(\gamma)$ from \eqref{groupaction} of the group $\G = S^1 \times \R$. Every element $\gamma$ of the group can be represented by an angle $\theta$ and a shift $\tau$. The composition $\circ: \G \times \G \rightarrow \G$ of two elements $\gamma_1,\gamma_2 \in \G$ is given by
$
	\gamma_1 \circ \gamma_2 = (\theta_1 + \theta_2 \, \text{mod} \, 2\pi, \tau_1 + \tau_2)$
and the inverse map $\gamma \mapsto \gamma^{-1}$ by $\gamma^{-1} = (-\theta \, \text{mod} \, 2\pi, -\tau)$. Both maps are smooth and $\G$ is a two dimensional $C^\infty$-manifold. An atlas of the group $\G$ is given by the two (trivial) charts $(U,\chi)$ and $(\tilde{U}, \tilde{\chi})$ defined by
\begin{alignat*}{2}
	& U = \{ \gamma = (\theta \, \text{mod} \, 2\pi, \tau) \in \G: \theta \in (-\pi,\pi), \tau \in \R \}, && \quad \chi: U \rightarrow \R^2, \; \gamma \mapsto \chi(\gamma) = (\theta, \tau), \\
    & \tilde{U} = \{ \gamma  = (\theta \, \text{mod} \, 2\pi, \tau) \in \G: \theta \in (0,2\pi), \tau \in \R \}, && \quad \tilde{\chi}: \tilde{U} \rightarrow \R^2, \; \gamma \mapsto \tilde{\chi}(\gamma) = (\theta, \tau).
  \end{alignat*}
We will always work with the chart $\chi$ since the arguments for $\tilde{\chi}$
will be almost identical.
Next we show smoothness of the group action $a(\cdot)\v$ in $\G$ depending on the regularity of $\v$.
\begin{lemma} \label{lemmagroup}
  The group action $a:\G \rightarrow GL[X_\eta]$, $\gamma \mapsto a(\gamma)$ from \eqref{groupaction} is a homomorphism and $a(\gamma)Y_\eta = Y_\eta$, $\gamma \in \G$.
  For $\v \in X_\eta$ the map $a(\cdot) \v: \G \rightarrow X_\eta$ is continuous
  and for $\v \in Y_\eta$ it is continuously differentiable.
  For $\gamma \in U$, $\chi(\gamma) = z$ the derivative applied to $h = (h_1,h_2)^\top \in \R^2$ is given by
\begin{align*}
	(a(\cdot)\v \circ \chi^{-1})'(z)h = -h_1 a(\gamma) \mathbf{S}_1 \v - h_2 a(\gamma) \v_x,
\end{align*}
where $\mathbf{S}_1 \v = (S_1v,S_1\rho)^\top$, $\v_x = (v_x, 0)^\top$ for $\v = (v,\rho)^\top$.
\end{lemma}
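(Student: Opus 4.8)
The plan is to establish the three assertions --- that $a$ is a homomorphism into $GL[X_\eta]$ with $a(\gamma)Y_\eta=Y_\eta$, that $\gamma\mapsto a(\gamma)\v$ is continuous on $X_\eta$, and that it is $C^1$ on $Y_\eta$ with the stated derivative --- by conjugating the action with the homeomorphism $\mathcal{T}:X_\eta\to L^2_\eta\times\R^2$, $(v,\rho)^\top\mapsto(v-\rho\hat v,\rho)^\top$ from the proof of Lemma~\ref{lemma4.14}, so that everything reduces to standard mapping properties of the translation group $v\mapsto v(\cdot-\tau)$ and of pointwise multiplication by $R_{-\theta}$ on the weighted spaces $L^2_\eta$ and $H^1_\eta$. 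Two analytic inputs suffice. First, for each fixed $\tau$ the weights $\eta(\cdot-\tau)$ and $\eta$ are uniformly equivalent --- indeed $|\eta(x)/\eta(x-\tau)-1|\le e^{\mu|\tau|}-1$ since $t\mapsto\sqrt{t^2+1}$ is $1$-Lipschitz --- so, via the isometry $v\mapsto\eta v$, translation is bounded on $L^2_\eta$ and $H^k_\eta$ with norm locally bounded in $\tau$, strongly continuous in $\tau$, and, on $H^1_\eta$, differentiable in $\tau$ with derivative $-v_x(\cdot-\tau)$; moreover it commutes with $\partial^k$ and with the constant matrix $R_{-\theta}$. Second, the offset corrections $\hat v(\cdot-\tau)-\hat v$ and $\hat v'=\tfrac{\hat\mu}{2}\sech^2(\hat\mu\,\cdot\,)$ lie in $L^2_\eta$ and depend continuously, resp. differentiably, on $\tau$ in $L^2_\eta$, because they decay like $e^{-2\hat\mu|x|}$ and $0<\mu<2\hat\mu$.

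For the homomorphism property, $a(\gamma_1)a(\gamma_2)=a(\gamma_1\circ\gamma_2)$ follows from $R_{-\theta_1}R_{-\theta_2}=R_{-(\theta_1+\theta_2)}$ and $(v(\cdot-\tau_2))(\cdot-\tau_1)=v(\cdot-\tau_1-\tau_2)$, and since $a(e)=I$ each $a(\gamma)$ is a bijection with inverse $a(\gamma^{-1})$. Boundedness of $a(\gamma)$ on $X_\eta$ and the invariance $a(\gamma)Y_\eta=Y_\eta$ follow from the decomposition
\[
  R_{-\theta}v(\cdot-\tau)-(R_{-\theta}\rho)\hat v
   = R_{-\theta}\big((v-\rho\hat v)(\cdot-\tau)\big)+(R_{-\theta}\rho)\big(\hat v(\cdot-\tau)-\hat v\big)\in L^2_\eta,
\]
together with $\partial^k(R_{-\theta}v(\cdot-\tau))=R_{-\theta}v^{(k)}(\cdot-\tau)\in L^2_\eta$ for $k=1,2$ when $\v\in Y_\eta$; applying the same to $a(\gamma^{-1})$ gives equality of the images.

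Continuity and differentiability of $z\mapsto a(\chi^{-1}(z))\v$ are both reduced to the identity element: for $z'$ near $z=\chi(\gamma)$ one has $\chi^{-1}(z')=\gamma\circ\chi^{-1}(z'-z)$, hence $a(\chi^{-1}(z'))\v=a(\gamma)\,a(\chi^{-1}(z'-z))\v$ locally, so --- $a(\gamma)$ being a fixed bounded operator --- it suffices to treat $\delta\mapsto a(\delta)\v$ at $\delta=e$, the chain rule producing the factor $a(\gamma)$. In the $\mathcal{T}$-picture, $\mathcal{T}(a(\delta)\v-\v)$ has first component $R_{-\theta}((v-\rho\hat v)(\cdot-\tau))-(v-\rho\hat v)+(R_{-\theta}\rho)(\hat v(\cdot-\tau)-\hat v)$ and second component $(R_{-\theta}-I)\rho$, both tending to $0$ as $(\theta,\tau)\to 0$ by the two inputs; this gives continuity at $e$, hence, via the reduction, continuity of $z\mapsto a(\chi^{-1}(z))\mathbf{S}_1\v$ and of $z\mapsto a(\chi^{-1}(z))\v_x$ for the fixed elements $\mathbf{S}_1\v\in X_\eta$ and $\v_x=(v_x,0)^\top\in X_\eta$ (the latter using $v_x\in L^2_\eta$). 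The $\theta$-derivative at $e$ needs only $\v\in X_\eta$: $\mathcal{T}(s^{-1}(a((s,0))\v-\v))=(s^{-1}(R_{-s}-I)(v-\rho\hat v),\,s^{-1}(R_{-s}-I)\rho)\to(-S_1(v-\rho\hat v),-S_1\rho)$ in $L^2_\eta\times\R^2$ since $s^{-1}(R_{-s}-I)\to-S_1$ as $2\times 2$ matrices, and $\mathcal{T}^{-1}$ identifies this partial as $-\mathbf{S}_1\v$. The $\tau$-derivative at $e$ uses $\v\in Y_\eta$ (already $X^1_\eta$ suffices): writing $v=(v-\rho\hat v)+\rho\hat v$ with $v-\rho\hat v\in H^1_\eta$, one gets $\mathcal{T}(s^{-1}(a((0,s))\v-\v))=(s^{-1}((v-\rho\hat v)(\cdot-s)-(v-\rho\hat v))+\rho\,s^{-1}(\hat v(\cdot-s)-\hat v),\,0)\to(-(v-\rho\hat v)_x-\rho\hat v_x,\,0)=(-v_x,0)$ in $L^2_\eta\times\R^2$, so $\mathcal{T}^{-1}$ identifies it as $-\v_x$.

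Assembling: $z\mapsto a(\chi^{-1}(z))\v$ has, at $z=\chi(\gamma)$, the continuous partial derivatives $-a(\gamma)\mathbf{S}_1\v$ and $-a(\gamma)\v_x$; a map from an open subset of $\R^2$ into the Banach space $X_\eta$ with continuous partials is Fréchet $C^1$, with total derivative $h\mapsto -h_1 a(\gamma)\mathbf{S}_1\v-h_2 a(\gamma)\v_x$, which is the asserted formula, and the chart $\tilde\chi$ is handled identically. I expect the only genuine obstacle to be the bookkeeping around the offset $\hat v$ --- one must repeatedly verify that $\hat v(\cdot-\tau)-\hat v$ and $\hat v'(\cdot-\tau)$ are well-behaved in $L^2_\eta$, which is precisely where the relation $0<\mu<2\hat\mu$ built into the choice of $\hat v$ enters; granting strong continuity and differentiability of the translation group on $L^2_\eta$ and $H^1_\eta$, no genuinely hard estimate remains.
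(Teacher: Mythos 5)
Your proof is correct, and it diverges from the paper's at two tactical points worth noting. First, you conjugate the whole argument by the homeomorphism $\mathcal{T}:X_\eta\to L^2_\eta\times\R^2$, $(v,\rho)^\top\mapsto(v-\rho\hat v,\rho)^\top$, so that the group action becomes a genuine product of a translation-rotation on $L^2_\eta$ and a rotation on $\R^2$; the paper instead works directly with the $X_\eta$-norm $|\rho|^2+\|v-\rho\hat v\|^2_{L^2_\eta}$, repeatedly inserting $\pm\rho\hat v(\cdot-\tau)$ etc.\ by hand. The conjugation buys you notational clarity: the offset bookkeeping that makes the paper's finite-difference estimate balloon into six terms $T_1,\dots,T_6$ is absorbed once and for all into $\mathcal{T}$, so that the two analytic inputs (translation is strongly continuous and differentiable on $H^1_\eta$, and $\hat v'\in L^2_\eta$) do all the work. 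Second, you compute the $\theta$- and $\tau$-partial derivatives separately at the identity and invoke the classical result that a Banach-space-valued map on an open subset of $\R^2$ with continuous partials is Fréchet $C^1$; the paper instead bounds the joint Fréchet remainder $\|a(\chi^{-1}(h))\v-\v+h_1\mathbf{S}_1\v+h_2\v_x\|_{X_\eta}$ by $o(|h|)$ directly. Both routes share the reduction to $\gamma=\mathbf 1$ via $\chi^{-1}(z')=\gamma\circ\chi^{-1}(z'-z)$ and the boundedness of $a(\gamma)$. Your route requires the extra (standard) observation that the partials $z\mapsto -a(\chi^{-1}(z))\mathbf{S}_1\v$ and $z\mapsto -a(\chi^{-1}(z))\v_x$ are continuous in $z$, which you correctly obtain from the already-established continuity of $z\mapsto a(\chi^{-1}(z))\w$ for the fixed elements $\w=\mathbf S_1\v,\v_x\in X_\eta$; the paper avoids this continuity-of-partials step but at the cost of a much longer remainder estimate. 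Your observation that the $\theta$-partial needs only $\v\in X_\eta$ while the $\tau$-partial needs $\v\in X^1_\eta$ (so the hypothesis $\v\in Y_\eta$ is slightly more than necessary) is also correct and is a nice refinement not recorded in the paper.
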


The proof of Lemma \ref{lemmagroup} is straightforward and will be given in the Appendix. It is based on well known properties of translation and rotation on (weighted) Lebesgue and Sobolev spaces. Next recall the Cauchy problem \eqref{CP} with perturbed initial data
\begin{align*}
	\u_t = \F(\u), \quad \u(0) = \v_\star + \v_0.
\end{align*}
We follow the approach in  \cite{BeynLorenz}, \cite{Henry} and decompose the
dynamics of the solution into a motion along the group orbit $\{a(\gamma)\v_\star:\gamma \in \G\}$ of the wave and into a perturbation $\w$ in the space $V_\eta$. We use local coordinates in $U$ and write the solution $\u(t)$ as
\begin{align} \label{decomp}
	\u(t) = a(\gamma(t)))\v_\star + \w(t), \quad \gamma(t) = \chi^{-1}(z(t)) \in U,\,\w(t) \in V_\eta
\end{align}
for $t \ge 0$. Thus $z$ describes the local coordinates of the motion on the group orbit $\mathcal{O}(\v_\star)$ given by $\gamma$ in the chart $(U,\chi)$ and $\w \in V_\eta$ is the difference of the solution to the group orbit in $V_\eta = \ker(\L_\eta^*)^\bot$. It turns out that the decomposition is unique as long as the solution stays in a small neighborhood of the group orbit and $\gamma$ stays in $U$. This will be guaranteed by taking sufficiently small initial perturbations $\v_0$. Let $P_\eta$ be the projector  onto $\ker(\L_\eta)$ from \eqref{projector} and recall from \eqref{eq2:eigenfunctions} that  $\ker(\L_\eta)$ is spanned
by the eigenfunctions $\varphi_2=\mathbf{S}_1 \v_\star$ and $\varphi_1=\v_{\star,x}$. Following \cite{BeynLorenz} we define
\begin{align} \label{Pi}
	\Pi_\eta: \chi(U) \subset \R^2 & \rightarrow \ker(\L_\eta), \quad z \mapsto P_\eta ( a(\chi^{-1}(z)) \v_\star - \v_\star).
\end{align}
For simplicity  of notation we frequently replace $\chi^{-1}(z)$ by $\gamma$
where $\gamma$ is always taken in our working chart $(U,\chi)$. The next
lemma uses $\Pi_\eta$ to show uniqueness of the decomposition \eqref{decomp} in a neighborhood of $\v_\star$.

\begin{lemma} \label{lemmatrafo}
  Let Assumption \ref{A1}, \ref{A2}, \ref{A4} and \ref{A5} be satisfied and let $\mu_1$
  be given by Lemma \ref{lemma4.18}. Then for all $0 < \mu \le \mu_1$ there is a zero neighborhood $W=W(\mu) \subset \chi(U)$ such that the map $\Pi_\eta: W \rightarrow \ker(\L_\eta)$ from \eqref{Pi} is a local diffeomorphism. Moreover, there is a zero neighborhood
  $V=V(\mu) \subset \chi(U) \times V_\eta$ such that the transformation
\begin{align*}
	T_\eta: V & \rightarrow X_\eta, \quad  (z, \w) \mapsto a(\chi^{-1}(z)) \v_\star - \v_\star + \w 
\end{align*}
is a diffeomorphism with the solution of $T_\eta(z,\w) = \v$ given by
\begin{align} \label{Tsolution}
	z = \Pi_\eta^{-1} ( P_\eta \v), \quad \w = \v + \v_\star - a(\chi^{-1}(z)) \v_\star.
\end{align}
\end{lemma}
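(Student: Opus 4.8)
The plan is to apply the inverse function theorem twice: first to $\Pi_\eta$ on a zero neighborhood of $\R^2$, and then to $T_\eta$ on a zero neighborhood of $\chi(U)\times V_\eta$. First I would record that $\Pi_\eta(0)=0$, since $a(\chi^{-1}(0))\v_\star=a(\mathrm{id})\v_\star=\v_\star$. Next I would compute the Fréchet derivative $D\Pi_\eta(0)$ using Lemma \ref{lemmagroup}: differentiating $z\mapsto P_\eta(a(\chi^{-1}(z))\v_\star-\v_\star)$ at $z=0$ and applying $h=(h_1,h_2)^\top$ gives
\begin{align*}
  D\Pi_\eta(0)h = P_\eta\big(-h_1\,\mathbf{S}_1\v_\star - h_2\,\v_{\star,x}\big)
  = -h_1\varphi_2 - h_2\varphi_1,
\end{align*}
because $\mathbf{S}_1\v_\star=\varphi_2$ and $\v_{\star,x}=\varphi_1$ both lie in $\ker(\L_\eta)=\ran(P_\eta)$, so $P_\eta$ acts as the identity on them. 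Hence $D\Pi_\eta(0)$ maps $\R^2$ bijectively onto $\mathrm{span}\{\varphi_1,\varphi_2\}=\ker(\L_\eta)$; it is an isomorphism of $2$-dimensional spaces. Since $a(\cdot)\v_\star$ is $C^1$ (Lemma \ref{lemmagroup}, as $\v_\star\in Y_\eta$) and $P_\eta$ is bounded linear, $\Pi_\eta$ is $C^1$, and the inverse function theorem yields a zero neighborhood $W=W(\mu)\subset\chi(U)$ on which $\Pi_\eta$ is a local diffeomorphism onto a neighborhood of $0$ in $\ker(\L_\eta)$. This proves the first claim.

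For the second claim I would analyze the map $T_\eta(z,\w)=a(\chi^{-1}(z))\v_\star-\v_\star+\w$ on $\chi(U)\times V_\eta$, which is $C^1$ for the same reasons. We have $T_\eta(0,0)=0$, and the derivative at $(0,0)$ applied to $(h,\mathbf{k})\in\R^2\times V_\eta$ is $D\Pi_\eta(0)h+\mathbf{k}\in\ker(\L_\eta)\oplus V_\eta$. Since $X_\eta=\ker(\L_\eta)\oplus V_\eta$ (recall $V_\eta=\ker(\L_\eta^*)^\perp$ and the decomposition stated after \eqref{projector}) and $D\Pi_\eta(0):\R^2\to\ker(\L_\eta)$ is an isomorphism, the block-triangular derivative $DT_\eta(0,0):\R^2\times V_\eta\to X_\eta$ is a Banach space isomorphism. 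The inverse function theorem then gives a zero neighborhood $V=V(\mu)\subset\chi(U)\times V_\eta$ on which $T_\eta$ is a diffeomorphism onto a neighborhood of $0$ in $X_\eta$.

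Finally I would verify the explicit solution formula \eqref{Tsolution}. Applying $P_\eta$ to $T_\eta(z,\w)=\v$ and using $P_\eta\w=0$ (since $\w\in V_\eta=\ran(I-P_\eta)$) gives $P_\eta(a(\chi^{-1}(z))\v_\star-\v_\star)=P_\eta\v$, i.e. $\Pi_\eta(z)=P_\eta\v$, hence $z=\Pi_\eta^{-1}(P_\eta\v)$ provided $P_\eta\v$ lies in the range of $\Pi_\eta|_W$; this holds after possibly shrinking $V$ so that $P_\eta\v$ is small. Once $z$ is determined, $\w=\v+\v_\star-a(\chi^{-1}(z))\v_\star$ follows directly by rearranging. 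One must check $\w\in V_\eta$, which is exactly the content of $\Pi_\eta(z)=P_\eta\v$ rewritten as $P_\eta(\v+\v_\star-a(\chi^{-1}(z))\v_\star)=0$. The main obstacle is bookkeeping on the neighborhoods: one must shrink $W$, then $V$, so that $z(\v)=\Pi_\eta^{-1}(P_\eta\v)$ stays in $W$ and the resulting $\gamma=\chi^{-1}(z)$ stays in the chart $U$; beyond this the argument is a routine double application of the inverse function theorem, the key computational input being the derivative formula from Lemma \ref{lemmagroup} together with the biorthonormality \eqref{adjointEF}.
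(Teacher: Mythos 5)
Your proposal is correct and follows essentially the same route as the paper: two applications of the inverse function theorem, first for $\Pi_\eta$ and then for the block-triangular map $T_\eta$, followed by applying $P_\eta$ to $T_\eta(z,\w)=\v$ to extract \eqref{Tsolution}. One minor remark: your derivative formula $D\Pi_\eta(0)h=-h_1\varphi_2-h_2\varphi_1$ (with $\varphi_2=\mathbf{S}_1\v_\star$, $\varphi_1=\v_{\star,x}$) is the correct one given Lemma \ref{lemmagroup} and the labeling in \eqref{eq2:eigenfunctions}; the paper's proof writes $-y_1\varphi_1-y_2\varphi_2$, an apparent index swap that is harmless since either way one gets an isomorphism onto $\ker(\L_\eta)$.
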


\begin{proof}
  Since $0 < \mu \le \mu_1$ the projector $P_\eta$ and $\Pi_\eta$ are well defined. By Lemma \ref{lemmagroup} the group action $a$ is continuously differentiable
  and so is $\Pi_\eta$. Further, $\Pi_\eta(0) = 0$ and its derivative is given
  by $D\Pi_\eta (0)y = -y_1\varphi_1 - y_2 \varphi_2$, $y \in \R^2$ where $\varphi_1$, $\varphi_2$. Therefore, $D\Pi_\eta(0)$ is invertible on $\ker (\L_\eta)$
  and the first assertion is a consequence of the inverse function theorem.
  By the same arguments, $T_\eta$ is continuously differentiable,
  $T_\eta(0,0) = 0$ and its derivative is given by
  $DT_\eta(0,0) = \small{
    \begin{pmatrix}D\Pi_\eta(0)  & I_{V_{\eta}\to X_{\eta}} \end{pmatrix} }:\R^2 \times V_{\eta} \to X_{\eta}$ which is again invertible. Hence $T_\eta: V \rightarrow X_\eta$ is a diffeomorphism on a zero neighborhood $V \subset \chi(U) \times V_\eta$. Finally, applying
  $P_\eta$ to $T_\eta(z,\w) = \v$ yields $z = \Pi_\eta^{-1} ( P_\eta \v )$ while
  the second equation in \eqref{Tsolution} follows from the definition of $T_{\eta}$.
\end{proof}

Consider a smooth solution $\u(t)$, $t \in [0,t_\infty)$ of \eqref{CP} which stays close to the profile of the TOF. In particular, assume that
  $\u(t) - \v_\star, t \in [0,t_{\infty})$ lies in the region where $T_\eta^{-1}$ exists by Lemma \ref{lemmatrafo}. Then there are unique $z(t) \in \chi(U)$ and $\w(t) = (w(t), \zeta(t))^\top \in V_\eta$ for $t \in [0,t_\infty)$ such that
\begin{align*}
	\u(t) - \v_\star = T_\eta ( z(t), \w(t) ) \quad \forall t \in [0,t_\infty),
\end{align*}
and \eqref{decomp} holds. Taking the initial condition from \eqref{CP} into account yields for $t = 0$
\begin{align*}
	\v_\star + \v_0 = \u(0) = a(\chi^{-1}(z(0))) \v_\star + \w(0),
\end{align*}
which leads to $\v_0 = T_\eta(z(0), \w(0))$. Therefore, the initial conditions for $z,\w$ are given by
\begin{align} \label{initcond}
	z(0) = \Pi_\eta^{-1} ( P_\eta \v_0)=: z_0, \quad \w(0) = \v_0 + \v_\star - a(\chi^{-1}(z(0))) \v_\star =: \w_0.
\end{align}
Now we write the angular and translational components of $z$ explicitly as $z(t) = (\theta(t),\tau(t))$. 
We insert the decomposition \eqref{decomp} into \eqref{CP} and obtain 
\begin{align*}
	0 = \u_t - \L_\eta \u  = \frac{d}{dt} a(\chi^{-1}(z))\v_\star + \w_t & - a(\gamma)\vek{A v_{\star,xx} + c v_{\star,x} + S_\omega v_\star}{ S_\omega v_\infty  } \\
	& - \vek{A w_{xx} + c w_x + S_\omega w}{ S_\omega \zeta  } - \vek{f(R_{\theta} v_\star(\cdot - \tau) + w)}{f(R_{\theta} v_\infty + \zeta)}.
\end{align*}
Using the equivariance of $\F$  and the derivative of the group action from Lemma \ref{lemmagroup}, leads to 
\begin{align} \label{eq2}
	\w_t =  \L_\eta \w  -  a(\chi^{-1}(z))\varphi_1 \theta_t - a(\chi^{-1}(z))\varphi_2 \tau_t + r^{[f]}( z, \w)
\end{align}
where the remainder $r^{[f]}(z,\w)$ is given for $z=(\theta,\tau)$ and $\w \in V_{\eta}$ by
\begin{align*}
	r^{[f]}( z , \w ) := \vek{f(R_{\theta} v_\star(\cdot - \tau) + w)}{f(R_{\theta} v_\infty + \zeta)} - \vek{f(R_{\theta} v_\star(\cdot - \tau))}{f(R_{\theta} v_\infty)} - \vek{Df(v_\star)w}{Df(v_\infty)\zeta}.
\end{align*}
Let us apply the projector $P_\eta$ to \eqref{eq2} and use $\w(t) \in V_\eta$, $t \in [0,t_\infty)$  and  $P_\eta(\w_t - \L_\eta \w)=0$ to obtain the equality
\begin{align} \label{eq3}
	0 & = P_\eta r^{[f]}( z, \w ) - P_\eta a(\chi^{-1}(z))\varphi_1 \theta_t - P_\eta a(\chi^{-1}(z)) \varphi_2 \tau_t.
\end{align}
The next lemma shows that equation \eqref{eq3} can be written as an explicit ODE for $z = (\theta,\tau)$.

\begin{lemma} \label{Lemma4.5}
  Let Assumption \ref{A1}, \ref{A2}, \ref{A4} and \ref{A5} be satisfied and let $\mu_1$
  be given by  Lemma \ref{lemma4.18}. Then for all $0 < \mu \le \mu_1$ the map
\begin{align*}
	S_\eta(z): \R^2 & \rightarrow \ker(\L_\eta), \quad	y \mapsto P_\eta a(\chi^{-1}(z)) \varphi_1 y_1 + P_\eta a(\chi^{-1}(z)) \varphi_2 y_2
\end{align*}
is continuous, linear and continuously differentiable w.r.t. $z\in (-\pi,\pi) \times \R$. Moreover, there is a zero neighborhood $V=V(\mu) \subset \R^2$  such that $S_\eta(z)^{-1}$ exists for all $z \in V$ and depends continuously on $z$.
\end{lemma}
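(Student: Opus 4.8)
The plan is to use that $\ker(\L_\eta)$ is two–dimensional with the explicit basis $\{\varphi_1,\varphi_2\}$ from \eqref{eq2:eigenfunctions}, so that $S_\eta(z)$ is completely encoded by the $2\times 2$ matrix $M(z)=\big(m_{ij}(z)\big)_{i,j=1,2}$ with entries $m_{ij}(z):=\big(\psi_i,\,a(\chi^{-1}(z))\varphi_j\big)_{X_\eta}$. Indeed, by the definition \eqref{projector} of $P_\eta$ one has $S_\eta(z)e_j=m_{1j}(z)\varphi_1+m_{2j}(z)\varphi_2$. Linearity of $y\mapsto S_\eta(z)y$ is immediate from the defining formula, so all that remains is to prove regularity of $z\mapsto M(z)$ on $\chi(U)=(-\pi,\pi)\times\R$ and invertibility of $M(z)$ near $z=0$.

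For the regularity, first I would record that $\varphi_1=\v_{\star,x}$ and $\varphi_2=\mathbf{S}_1\v_\star$ lie in $Y_\eta$ for every admissible $\mu$ — this was already observed in the proof of Lemma~\ref{lemma4.18} via Theorem~\ref{decay}. Hence Lemma~\ref{lemmagroup} applies and gives that $\gamma\mapsto a(\gamma)\varphi_j$ is continuously differentiable as a map $\G\to X_\eta$; composing with the chart diffeomorphism $\chi^{-1}\colon(-\pi,\pi)\times\R\to U$ shows $z\mapsto a(\chi^{-1}(z))\varphi_j$ is $C^1$ into $X_\eta$, with derivative obtained from the explicit expression in Lemma~\ref{lemmagroup} together with $(\chi^{-1})'$. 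Since $\psi_i\in\D(\L_\eta^*)\subset X_\eta$, the functionals $(\psi_i,\cdot)_{X_\eta}$ are bounded on $X_\eta$, so $m_{ij}\in C^1(\chi(U),\R)$; equivalently $z\mapsto P_\eta a(\chi^{-1}(z))\varphi_j=S_\eta(z)e_j$ is continuous and continuously differentiable into $\ker(\L_\eta)$, which settles the first sentence of the lemma.

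For the invertibility near $z=0$, the point is that $\chi^{-1}(0)$ is the group identity and $a$ maps it to the identity operator, whence $S_\eta(0)y=P_\eta(\varphi_1y_1+\varphi_2y_2)=\varphi_1y_1+\varphi_2y_2$ because $\varphi_1,\varphi_2\in\ran(P_\eta)=\ker(\L_\eta)$. Consequently $M(0)=\big((\psi_i,\varphi_j)_{X_\eta}\big)_{i,j}=I_2$ by the biorthogonality \eqref{adjointEF}, in particular $\det M(0)=1\neq0$. Since $z\mapsto M(z)$ is continuous on $\chi(U)$, there is a zero neighbourhood $V=V(\mu)\subset(-\pi,\pi)\times\R$ on which $\det M(z)\neq0$; on $V$ the map $S_\eta(z)\colon\R^2\to\ker(\L_\eta)$ is then bijective, its inverse is represented in the basis $\{\varphi_1,\varphi_2\}$ by $M(z)^{-1}$, and Cramer's rule together with continuity of the $m_{ij}$ yields that $z\mapsto S_\eta(z)^{-1}$ is continuous on $V$.

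I do not expect a genuine obstacle here: the statement is essentially a finite–dimensional perturbation argument bolted onto Lemma~\ref{lemmagroup}. The only points needing care are confirming that $P_\eta$ is bounded on $X_\eta$ (so that composing with it preserves $C^1$–regularity) and that $\varphi_1,\varphi_2\in Y_\eta$ (so that the differentiability, not merely the continuity, part of Lemma~\ref{lemmagroup} applies). The same argument runs verbatim in the second chart $(\tilde U,\tilde\chi)$, which is why only $\chi$ is treated.
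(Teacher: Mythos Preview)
Your proposal is correct and follows essentially the same route as the paper: both encode $S_\eta(z)$ via the $2\times 2$ matrix $M(z)$ with entries $(\psi_i,a(\chi^{-1}(z))\varphi_j)_{X_\eta}$ (the paper writes $(\psi_i,P_\eta a(\chi^{-1}(z))\varphi_j)_{X_\eta}$, but by biorthogonality these coincide), invoke Lemma~\ref{lemmagroup} for the $C^1$-dependence on $z$, observe $M(0)=I_2$, and conclude invertibility near $0$ by continuity. Your version is slightly more explicit in verifying $\varphi_1,\varphi_2\in Y_\eta$ so that the differentiability clause of Lemma~\ref{lemmagroup} applies, which is a welcome clarification.
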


\begin{proof}
  Since $0 < \mu \le \mu_1$ the projector $P_\eta$ and the map $S_\eta(z)$ are well defined. Moreover, $S_\eta(z)$ is linear and continuous. Once more the smoothness of the group action, cf. Lemma \ref{lemmagroup}, implies that $S_\eta(z)$ is continuously differentiable w.r.t. $z$. Take $\w \in \ker (\L_\eta) =
  \mathrm{span}\{ \varphi_1, \varphi_2\}$ and recall the adjoint eigenfunctions $\psi_1, \psi_2$ from \eqref{adjointEF}. We form the inner products in $X_\eta$ of the equation $S_\eta(z) y = \w$, $y \in \R^2$ with the adjoint eigenfunctions:
\begin{align*}
	M(z) y = \vek{(\psi_1, \w )}{(\psi_2, \w )}, \quad M(z) = \begin{pmatrix}
	(\psi_1, P_\eta a(\chi^{-1}(z)) \varphi_1 ) & (\psi_1, P_\eta a(\chi^{-1}(z)) \varphi_2 ) \\
	( \psi_2, P_\eta a(\chi^{-1}(z)) \varphi_1 ) & ( \psi_2, P_\eta a(\chi^{-1}(z)) \varphi_2 )
	\end{pmatrix}.
\end{align*}
Now $M(0) = I$ and $M(z)$ depends continuously on $z$. Then there exists a zero neighborhood $V \subset \R^2$ such that $M(z)$ is invertible and its inverse depends continuously on $z$. Finally, we obtain for $S_\eta(z)^{-1}$ the representation
\begin{align*}
	S_\eta (z)^{-1} \w = M(z)^{-1} \vek{(\psi_1, \w )}{(\psi_2, \w )},
\end{align*}
which proves our assertion.
\end{proof}

As a consequence of Lemma \ref{Lemma4.5} we obtain from \eqref{eq3} and \eqref{initcond} the $z$-equation
\begin{align} \label{gamma1}
	z_t = r^{[z]} ( z, \w), \quad z(0) = \Pi_\eta^{-1}(P_\eta \v_0),
\end{align}
where $r^{[z]}$ is given by
\begin{align} \label{gammaremainder}
	 r^{[z]}(z,\w):= S_\eta(z)^{-1} P_\eta r^{[f]} (z, \w).
\end{align}
This equation describes the motion of the solution projected onto the group orbit $\mathcal{O}(v_\star)$. The last step is to apply the projector $(I-P_\eta)$ to \eqref{eq2} and using \eqref{gamma1} to obtain the equation for the offset $\w$ from the group orbit:
\begin{align*}
	\w_t &  = \L_\eta \w + (I-P_\eta)r^{[f]} ( z, \w ) - (I-P_\eta)(a(\cdot)\v_\star \circ \chi^{-1} )(z) S_\eta(z)^{-1}P_\eta r^{[f]}( z, \w ) \\
	& =: \L_\eta \w + r^{[w]} ( z, \w )
\end{align*}
with the remainder $r^{[w]}$ given by
\begin{align} \label{wremainder}
	r^{[w]}(z, \w) := \Big((I-P_\eta) - (I-P_\eta)(a(\cdot)\v_\star \circ \chi^{-1} )(z) S_\eta(z)^{-1}P_\eta \Big) r^{[f]} ( z, \w).
\end{align}
Finally, the fully transformed system including initial values for $\w$ and $z$ reads as
\begin{alignat}{2} 
	\w_t & = \L_\eta \w + r^{[w]} ( z, \w), \quad & \quad \w(0) & = \v_0 + \v_\star - a(\Pi_\eta^{-1}(P_\eta \v_0)) \v_\star =: \w_0, \label{wDGL}\\
	z_t & = r^{[z]} ( z, \w),& \quad z(0) & = \Pi_\eta^{-1} ( P_\eta \v_0)=: z_0. \label{gammaDGL}
\end{alignat}
Reversing the steps leading to \eqref{wDGL}, \eqref{gammaDGL} shows
that every local solution of this system leads to a solution of \eqref{CP}
close to $\v_{\star}$ via the transformation \eqref{decomp}.

\sect{Estimates of nonlinearities}
\label{sec6}
To study solutions of the system \eqref{wDGL}, \eqref{gammaDGL} we need to control the remaining nonlinearities $r^{[w]}, r^{[z]}$ from \eqref{wremainder} and \eqref{gammaremainder}. In this section we derive Lipschitz estimates with small Lipschitz constants for the nonlinearities in the space $X^1_\eta$. Of course the estimates will be guaranteed by the smoothness of $f$ from \eqref{perturbsys}. In particular, we can assume $f \in C^3$ by Assumption \ref{A1a}. However, our choice of the underlying space $X_\eta$ requires somewhat laborious calculations to derive the estimates. The main work is to take care of the offset which is hidden in the second component of elements in $X_\eta$.

\begin{lemma} \label{Lemma4.6}
  Let Assumption \ref{A1}, \ref{A2}, \ref{A4} and \ref{A5} be satisfied and let $\mu_1$ be given by Lemma \ref{lemma4.18}. Then for every $0 < \mu \le \mu_1$ there are constant $C = C(\mu) > 0$ and $\delta = \delta(\mu) > 0$ such that for
  all $z,z_1,z_2 \in B_{\delta}(0) \subset \R^2$ and $\w, \w_1, \w_2 \in B_{\delta}(0) \subset X^1_\eta$ the following holds:
\begin{small}
\begin{alignat*}{2}
	& i) & \, & \| r^{[f]}(z, \w_1) - r^{[f]}(z,\w_2) \|_{X^1_\eta} \le C \left( |z| + \max \left\{  \| \w_1 \|_{X^1_\eta}, \| \w_2 \|_{X^1_\eta} \right\} \right) \| \w_1 - \w_2 \|_{X^1_\eta}, \\
	& ii) & \, & \| r^{[f]}(z_1, \w) - r^{[f]}(z_2,\w) \|_{X^1_\eta} \le C|z_1 - z_2|, \\
	& iii)& \, &  \| r^{[w]}(z, \w_1) - r^{[w]}(z, \w_2) \|_{X^1_\eta} \le C \left( |z| + \max \left\{ \| \w_1 \|_{X^1_\eta}, \| \w_2 \|_{X^1_\eta} \right\} \right) \| \w_1 - \w_2 \|_{X^1_\eta}, \\
	& iv)& \, & \| r^{[w]}(z_1, \w_2) - r^{[w]}(z_2, \w_2) \|_{X_\eta^1} \le C \left( |z_1 - z_2| +  \| \w_1 - \w_2\|_{X^1_\eta} \right), \\
	& v) & \, & | r^{[z]}(z_1, \w_1) - r^{[z]}(z_2, \w_2) | \le C \left( |z_1 - z_2| +  \| \w_1 - \w_2 \|_{X^1_\eta} \right).
\end{alignat*}
\end{small}
\end{lemma}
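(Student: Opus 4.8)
\emph{Plan.} The plan is to reduce everything to the two core estimates (i) and (ii). Indeed $r^{[w]}(z,\w)=\Theta(z)r^{[f]}(z,\w)$ and $r^{[z]}(z,\w)=S_\eta(z)^{-1}P_\eta r^{[f]}(z,\w)$, where $\Theta(z):=(I-P_\eta)\bigl(I-(a(\cdot)\v_\star\circ\chi^{-1})'(z)\,S_\eta(z)^{-1}P_\eta\bigr)$. By Lemma \ref{lemmagroup}, Lemma \ref{Lemma4.5}, the smoothness of $v_\star$ (Theorem \ref{decay}), and the fact that $P_\eta$ has finite rank with range $\ker(\L_\eta)\subset Y_\eta$, the map $z\mapsto\Theta(z)$ is bounded on $X^1_\eta$ and $z\mapsto S_\eta(z)^{-1}P_\eta$ is bounded $X^1_\eta\to\R^2$, both locally Lipschitz on $B_\delta(0)$. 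Since moreover $r^{[f]}(z,0)=0$, estimate (i) with $\w_2=0$ gives $\|r^{[f]}(z,\w)\|_{X^1_\eta}\le C\|\w\|_{X^1_\eta}$. Then (iii) is immediate from (i), and (iv),(v) follow by writing, for instance, $r^{[w]}(z_1,\w)-r^{[w]}(z_2,\w)=\Theta(z_1)\bigl(r^{[f]}(z_1,\w)-r^{[f]}(z_2,\w)\bigr)+\bigl(\Theta(z_1)-\Theta(z_2)\bigr)r^{[f]}(z_2,\w)$ and invoking (ii), the $z$-Lipschitz dependence of $\Theta$ (resp.\ of $S_\eta(\cdot)^{-1}$), and the bound just stated; for (v) one splits off first the change in $\w$ (bounded by (i)) and then the change in $z$ (bounded by (ii)).

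\emph{Preliminaries for (i),(ii).} I would first fix a bounded ball $B\subset\R^2$ containing every argument of $f,Df,D^2f$ that occurs (each is $R_\theta v_\star(x-\tau)$, $v_\infty$, or $v_\star(x)$ perturbed by a quantity of size $O(\delta)$); on $B$, $f\in C^3$ (Assumption \ref{A1a}) yields uniform Lipschitz constants for $f$, $Df$, $D^2f$. Next, for $\w=(w,\zeta)^\top\in X^1_\eta$ the Sobolev embedding $H^1_\eta\hookrightarrow H^1(\R)\hookrightarrow L^\infty(\R)$ (valid since $\eta\ge 1$) gives $\|w\|_{L^\infty}+\|w_x\|_{L^2_\eta}+\|w-\zeta\hat v\|_{H^1_\eta}\le C\|\w\|_{X^1_\eta}$. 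As $0<\mu\le\mu_1<2\hat\mu\le\mu_\star$, the functions $\hat v\one_{\R_-}$, $(1-\hat v)\one_{\R_+}$, $\hat v(1-\hat v)$, $\hat v_x$ all lie in $L^2_\eta$, and by Theorem \ref{decay} so do $v_\star-v_\infty\hat v$, $v_\star'$, $v_\star''$, $v_\star'''$. Finally, $v_\star\in C^5_b$ gives, for $k=0,1,2$ and small $|z|=|(\theta,\tau)|$,
\begin{equation*}
  \|R_\theta v_\star^{(k)}(\cdot-\tau)-v_\star^{(k)}\|_{L^\infty}+\|R_\theta v_\star^{(k)}(\cdot-\tau)-v_\star^{(k)}\|_{L^2_\eta}\le C\,|z|,
\end{equation*}
by writing the left member as $(R_\theta-I)v_\star^{(k)}(\cdot-\tau)+\bigl(v_\star^{(k)}(\cdot-\tau)-v_\star^{(k)}\bigr)$ and using $\|R_\theta-I\|\le|\theta|$ together with the mean value theorem in $\tau$.

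\emph{Proof of (i) and (ii).} For (i) the $f(a)$-terms cancel, so $r^{[f]}(z,\w_1)-r^{[f]}(z,\w_2)$ has first component $\Delta P=\int_0^1\bigl[Df\bigl(a+w_2+t(w_1-w_2)\bigr)-Df(v_\star)\bigr](w_1-w_2)\,dt$ with $a=R_\theta v_\star(\cdot-\tau)$, second component $\Delta Q=\int_0^1\bigl[Df\bigl(R_\theta v_\infty+\zeta_2+t(\zeta_1-\zeta_2)\bigr)-Df(v_\infty)\bigr](\zeta_1-\zeta_2)\,dt$, and $\|r^{[f]}(z,\w_1)-r^{[f]}(z,\w_2)\|_{X^1_\eta}^2=|\Delta Q|^2+\|\Delta P-\hat v\,\Delta Q\|_{L^2_\eta}^2+\|\partial_x\Delta P\|_{L^2_\eta}^2$. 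In each term I substitute $w_1-w_2=(\zeta_1-\zeta_2)\hat v+(\tilde w_1-\tilde w_2)$ (with $\tilde w_i=w_i-\zeta_i\hat v$) and expand $Df(\cdot)-Df(v_\star)$ once more by $D^2f$. After grouping, every resulting term carries either a factor $R_\theta v_\star^{(k)}(\cdot-\tau)-v_\star^{(k)}$ (size $O(|z|)$ in $L^\infty$ and in $L^2_\eta$), or a difference of values of $Df$ or $D^2f$ at two points $O(|z|+\max_i\|\w_i\|_{X^1_\eta})$ apart, or an $L^2_\eta$-function of small norm ($\tilde w_i-\tilde w_j$, $v_\star^{(k)}$, $\hat v_x$), multiplied by a bounded factor and by the remaining factor $\|\w_1-\w_2\|_{X^1_\eta}$ (appearing as $|\zeta_1-\zeta_2|$, as $\|w_1-w_2\|_{L^\infty}$, or as $\|\partial_x(w_1-w_2)\|_{L^2_\eta}$). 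The one delicate point is $\|\Delta P-\hat v\,\Delta Q\|_{L^2_\eta}$: the piece $(\zeta_1-\zeta_2)\hat v$ of $w_1-w_2$ is not in $L^2_\eta$, so it must be kept together with $-\hat v\,\Delta Q$, producing $\int_0^1\hat v\bigl[(Df(a+\cdots)-Df(v_\star))-(Df(R_\theta v_\infty+\cdots)-Df(v_\infty))\bigr](\zeta_1-\zeta_2)\,dt$; expanding the two inner differences by $D^2f$ (at $v_\star$, resp.\ at $v_\infty$) and comparing, the bracket becomes a bounded operator applied to $\bigl(R_\theta(v_\star(\cdot-\tau)-v_\infty)-(v_\star-v_\infty)\bigr)+(w_2-\zeta_2)+t\bigl((w_1-\zeta_1)-(w_2-\zeta_2)\bigr)$, plus a $D^2f$-difference applied to $v_\star-v_\infty$; using $\|\hat v(w_i-\zeta_i)\|_{L^2_\eta}\le C\|\w_i\|_{X^1_\eta}$ and $\|\hat v(v_\star-v_\infty)\|_{L^2_\eta}\le C$ together with the preliminaries, the bracket times $\hat v$ is bounded in $L^2_\eta$ by $C(|z|+\max_i\|\w_i\|_{X^1_\eta})$, and multiplication by $|\zeta_1-\zeta_2|\le\|\w_1-\w_2\|_{X^1_\eta}$ closes the estimate. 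For $\|\partial_x\Delta P\|_{L^2_\eta}$ one differentiates and regroups as $\partial_x\Delta P=[Df(a+w_1)-Df(v_\star)]\partial_x(w_1-w_2)+\int_0^1\bigl[D^2f(a+\cdots)(a'+\partial_x w_2,\cdot)-D^2f(v_\star)(v_\star',\cdot)\bigr]dt\,(w_1-w_2)$ and notes $a'-v_\star'=O(|z|)$; here $w_1-w_2$ enters only through $\|w_1-w_2\|_{L^\infty}$, so the offset causes no trouble. Estimate (ii) is proved the same way, now Taylor-expanding in $z$: the $z$-independent term $Df(v_\star)w$ cancels, $z\mapsto R_\theta v_\star(\cdot-\tau)$ is Lipschitz in $L^\infty$ and in $L^2_\eta$, and the non-decaying limit $(R_{\theta_1}-R_{\theta_2})v_\infty$ of $a_1-a_2$ is absorbed by subtracting $\hat v\,\Delta_z Q$, exactly as above.

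\emph{Main obstacle.} The genuine work — and the source of the ``laborious'' calculations — is to keep the cancellations visible in the $X^1_\eta$ norm: since that norm only controls $w-\zeta\hat v$ in $L^2_\eta$, individual terms of $r^{[f]}$ and of $\partial_x r^{[f]}$ fail to lie in $L^2_\eta$, and the \emph{smallness} of the Lipschitz constant is lost unless the non-decaying contribution of the offset is carried through together with the subtracted $\hat v$-multiple of the second component (and with the $L^2_\eta$-factor $\hat v_x$ in the derivative term). Once the preliminaries above are in place, the rest is bookkeeping.
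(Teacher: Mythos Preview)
Your proposal is correct and follows essentially the same route as the paper: both reduce (iii)--(v) to (i) and (ii) via the factorization through $S_\eta(z)^{-1}P_\eta$ and the $z$-Lipschitz bounded operator in front of $r^{[f]}$, and both prove (i)--(ii) by splitting the $X^1_\eta$-norm into its three pieces, invoking the Sobolev embedding $H^1_\eta\hookrightarrow L^\infty$, and using the mean value theorem with $D^2f$ while keeping the non-decaying offset $\zeta\hat v$ paired with the subtracted $\hat v$-multiple of the second component (your ``delicate point'' is exactly the paper's $T_4$--$T_{13}$ and $T_8$/$T_9$ split over $\R_\pm$). Your preliminary list matches the paper's estimates (6.1)--(6.4), and your regrouping of $\partial_x\Delta P$ is a minor variant of the paper's $I_1$--$I_4$.
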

\begin{remark}
  Note that  $r^{[f]}(z,0) = 0$ holds so that the estimates i) and iii)  imply
  linear bounds for the the nonlinearities $r^{[f]}$ and $r^{[w]}$ in $B_\delta(0)$.
\end{remark}
\begin{proof}
  Let $\delta$ be so small such that
  $B_{\delta}(0)\subset \chi(U)$ and $B_\delta(0) \subset V$ with $V$ from Lemma \ref{Lemma4.5}. Then the remainders $r^{[f]}, r^{[w]},r^{[z]}$ are well defined by
  Lemmas \ref{lemma4.18}, \ref{lemmatrafo}, and \ref{Lemma4.5}). Let us set $\gamma = \chi(z) = (\theta,\tau)$ as well as
  $\gamma_i = \chi(z_i)= (\theta_i,\tau_i), i=1,2$. Further we write $\w = (w,\zeta)^\top$ and $\w_i= (w_i,\zeta_i)^\top$ for $i=1,2$. For the sake of notation we also write $a(\gamma) v = R_\theta v(\cdot - \tau)$ for a function $v : \R \rightarrow \R^2$. \\
Throughout the proof, $C = C(\mu)$ denotes a universal constant depending on $\mu$. The smoothness of $f$ and Sobolev embeddings imply
\begin{align} \label{Lemma4.6proof1}
\begin{split}
	& \| Df(a(\gamma) v_\star) - Df(v_\star) \|_{L^\infty}  \le C \| a(\gamma)v_\star - v_\star \|_{L^\infty} \\
	& \le C \| a(\gamma) v_\star -  R_\theta v_\infty \hat{v}  - (v_\star - v_\infty \hat{v} ) \|_{L^\infty} + C |R_\theta v_\infty - v_\infty| \\
	& \le C \| a(\gamma) v_\star - R_\theta v_\infty \hat{v}  - (v_\star - v_\infty \hat{v} ) \|_{H^1} + C |R_\theta v_\infty - v_\infty| \\
	& \le C \| a(\chi^{-1}(z)) \v_\star - \v_\star \|_{X^1_\eta} \le C|z|.
\end{split}  
\end{align}
The last estimate follows from the smoothness of the group action; see Lemma \ref{lemmagroup}. Similarly, we find
\begin{align} \label{Lemma4.6proof1b}
	| Df(R_\theta v_\infty) - Df(v_\infty)| \le C |z|
\end{align}
and
\begin{align} \label{Lemma4.6proof1c}
\begin{split}
	& \| a(\gamma) v_\star - R_\theta v_\infty - (v_\star - v_\infty) \|_{L^2_\eta(\R_+)} \\
	& \le \| a(\gamma) v_\star - R_\theta v_\infty \hat{v} - (v_\star - v_\infty \hat{v}) \|_{L^2_\eta(\R_+)} + |R_\theta v_\infty - v_\infty| \| \hat{v} -1 \|_{L^2_\eta(\R_+)} \\
	& \le C \| a(\chi^{-1}(z))\v_\star - \v_\star \|_{X_\eta} \le C |z|.
\end{split}
\end{align}
By Theorem \ref{decay} we can also estimate
\begin{align} \label{Lemma4.6proof1d}
	\| v_\star - v_\infty \|_{L^2_\eta(\R_+)} \le C. 
\end{align}
In what follows these estimates will be used frequently. We start with \\
\textbf{i).} By definition and the triangle inequality we can split the left side of i) into
\begin{align*}
	& \| r^{[f]}(z, \w_1) - r^{[f]}(z, \w_2) \|_{X^1_\eta} \\
	& \le |f(R_\theta v_\infty + \zeta_1) - f(R_\theta v_\infty + \zeta_2) - Df(v_\infty)(\zeta_1 - \zeta_2)| \\
	& \quad + \| f(a(\gamma) v_\star + w_1) - f(a(\gamma) v_\star + w_2) - Df(v_\star)(w_1-w_2) \\
	& \quad \quad - \hat{v}[f(R_\theta v_\infty + \zeta_1) - f(R_\theta v_\infty + \zeta_2)- Df(v_\infty)(\zeta_1 - \zeta_2)] \|_{L^2_\eta} \\
	& \quad + \| \partial_x[f(a(\gamma) v_\star + w_1) - f(a(\gamma) v_\star + w_2) - Df(v_\star)(w_1-w_2)] \|_{L^2_\eta} \\
	& =: T_1 + T_2 + T_3.
\end{align*}
The first term $T_1$ is estimated by
\begin{align*}
	& T_1 = | f(R_\theta v_\infty + \zeta_1) - f(R_\theta v_\infty + \zeta_2) - Df(v_\infty)(\zeta_1 - \zeta_2)| \\
	& \le \int_{0}^1 |Df(R_\theta v_\infty + \zeta_2 + (\zeta_1 - \zeta_2)s) -Df(v_\infty)| ds |\zeta_1 - \zeta_2| \\
	& \le  \left( \int_0^1 |Df(R_\theta v_\infty + \zeta_2 + (\zeta_1 - \zeta_2)s) -Df(R_\theta v_\infty)| ds + |Df(R_\theta v_\infty) - Df(v_\infty)| \right) |\zeta_1 - \zeta_2|   \\
	& \le C \left( \int_0^1 |\zeta_2 - (\zeta_1 - \zeta_2)s| ds + |R_\theta v_\infty - v_\infty|\right) |\zeta_1 - \zeta_2| \\
	& \le C \left( |z| + \max \{ |\zeta_1|, |\zeta_2| \} \right) |\zeta_1 - \zeta_2| \le C \left( |z| + \max \left\{ \| \w_1 \|_{X^1_\eta}, \|\w_2\|_{X^1_\eta} \right\} \right) \|\w_1 - \w_2\|_{X^1_\eta}.
\end{align*}
For the second term $T_2$ we have
\begin{align*}
	& \| f(a(\gamma) v_\star + w_1) - f(a(\gamma) v_\star + w_2) - Df(v_\star)(w_1-w_2) \\
	& \qquad \qquad -  \hat{v} [f(R_\theta v_\infty + \zeta_1) - f(R_\theta v_\infty + \zeta_2)- Df(v_\infty)(\zeta_1 - \zeta_2)] \|_{L^2_\eta} \\
	& = \Big\| \int_0^1 Df(a(\gamma)v_\star + w_2 +(w_1-w_2)s) - Df(v_\star) ds (w_1 - w_2) \\
	& \qquad \qquad - \hat{v} \int_0^1 Df(R_\theta v_\infty + \zeta_2 + (\zeta_1 - \zeta_2)s) -Df(v_\infty) ds (\zeta_1 - \zeta_2) \Big\|_{L^2_\eta} \\
	& \le \Big\|  \int_0^1 Df(a(\gamma)v_\star + w_2 +(w_1-w_2)s) - Df(a(\gamma) v_\star) ds (w_1 - w_2) \\
	& \qquad \qquad - \hat{v} \int_0^1 Df(R_\theta v_\infty + \zeta_2 + (\zeta_1 - \zeta_2)s) -Df(R_\theta v_\infty) ds (\zeta_1 - \zeta_2) \Big\|_{L^2_\eta} \\
	& \qquad + \| [Df(a(\gamma) v_\star) - Df(v_\star)](w_1 - w_2) - \hat{v}[Df(R_\theta v_\infty) - Df(v_\infty)](\zeta_1 - \zeta_2) \|_{L^2_\eta} \\
	& =: T_4 + T_5.
\end{align*}
$T_5$ is bounded by another two terms
\begin{align*}
	T_5 & \le \| [Df(a(\gamma) v_\star) - Df(v_\star)](w_1 - \hat{v}\zeta_1 - w_2 + \hat{v} \zeta_2) \|_{L^2_\eta} \\
	& \quad + \| [Df(a(\gamma) v_\star) - Df(v_\star) - Df(R_\theta v_\infty) + Df(v_\infty)] (\zeta_1 - \zeta_2) \hat{v} \|_{L^2_\eta} =: T_6 + T_7.
\end{align*}
Using \eqref{Lemma4.6proof1} we have
\begin{align*}
	T_6  \le C |z| \| w_1 - \hat{v} \zeta_1 - w_2 + \hat{v} \zeta_2 \|_{L^2_\eta}  \le C |z| \| \w_1 - \w_2 \|_{X^1_\eta}.
\end{align*}
We bound $T_7$ by two terms, one for the negative and one for the positive half-line:
\begin{align*}
	T_7 & \le \| [Df(a(\gamma) v_\star) - Df(v_\star) - Df(R_\theta v_\infty) + Df(v_\infty)] (\zeta_1 \hat{v} - \zeta_2 \hat{v}) \|_{L^2_\eta(\R_-)} \\
	& \quad + \| [Df(a(\gamma) v_\star) - Df(v_\star) - Df(R_\theta v_\infty) + Df(v_\infty)] (\zeta_1 \hat{v} - \zeta_2 \hat{v}) \|_{L^2_\eta(\R_+)} =: T_8 + T_9
\end{align*}
Now, \eqref{Lemma4.6proof1}, \eqref{Lemma4.6proof1b} imply
\begin{align*}
	T_8 & \le \| Df(a(\gamma)v_\star) - Df(v_\star) - Df(R_\theta v_\infty) + Df(v_\infty) \|_{L^\infty} |\zeta_1 - \zeta_2| \| \hat{v}\|_{L^2_\eta(\R_-)} \\
	& \le C |z| |\zeta_1 - \zeta_2| \le C | z | \| \w_ 1- \w_2 \|_{X^1_\eta}.
\end{align*}
We use the abbreviations $\chi_1(s) := v_\star + s(a(\gamma)v_\star - v_\star)$, $\chi_2(s):=v_\infty + s(R_\theta v_\infty - v_\infty)$, $s \in [0,1]$ and \eqref{Lemma4.6proof1c}, \eqref{Lemma4.6proof1d} to obtain
\begin{align*}
	T_9  & = \|  [Df(a(\gamma)v_\star) - Df(v_\star) - Df(R_\theta v_\infty) + Df(v_\infty)](\zeta_1 - \zeta_2)\hat{v} \|_{L^2_\eta(\R_+)}   \\
	& \le \Big\| \int_0^1 D^2f(\chi_1(s))[a(\gamma)v_\star - v_\star,(\zeta_1 - \zeta_2)\hat{v}] ds  \\
	& \qquad - \int_0^1 D^2f(\chi_2(s))[R_\theta v_\infty - v_\infty,(\zeta_1 - \zeta_2)\hat{v}] ds \Big\|_{L^2_\eta(\R_+)}\\
	& \le \Big\| \int_0^1 D^2f(\chi_1(s))[a(\gamma)v_\star - v_\star - R_\theta v_\infty + v_\infty, (\zeta_1 - \zeta_2)\hat{v}] ds \Big\|_{L^2_\eta(\R_+)} \\
	& \quad \quad + \Big\| \int_0^1 [D^2f(\chi_1(s))-D^2f(\chi_2(s))][R_\theta v_\infty + v_\infty, (\zeta_1 - \zeta_2)\hat{v}] ds \Big\|_{L^2_\eta(\R_+)} \\
	& \le C \Big( \| a(\gamma)v_\star - R_\theta v_\infty - (v_\star - v_\infty) \|_{L^2_\eta(\R_+)} \\
	& \qquad +  \Big\| \int_0^1 \chi_1(s) - \chi_2(s) ds \Big\|_{L^2_\eta(\R_+)} |R_\theta v_\infty - v_\infty| \Big) |\zeta_1 - \zeta_2| \\
	& \le C |z| \| \w_1 - \w_2 \|_{X^1_\eta}.
\end{align*}
To estimate $T_4$ we use the abbreviations $w(s) := w_2 + (w_1-w_2)s$, $\zeta(s) := \zeta_2 + (\zeta_1-\zeta_2)s$, $s \in [0,1]$ and obtain
\begin{align*}
	T_4 & = \Big\|  \int_0^1 Df(a(\gamma)v_\star + w(s)) - Df(a(\gamma) v_\star) ds (w_1 - w_2) \\
	& \qquad \qquad - \hat{v} \int_0^1 Df(R_\theta v_\infty + \zeta(s)) -Df(R_\theta v_\infty) ds (\zeta_1 - \zeta_2) \Big\|_{L^2_\eta} \\
	& \le \Big\| \int_0^1 Df(a(\gamma) v_\star + w(s)) - Df(a(\gamma) v_\star) ds (w_1 - \zeta_1 \hat{v} - w_2 + \zeta_2 \hat{v}) \Big\|_{L^2_\eta} \\
	& \qquad \qquad + \Big\| \int_0^1 Df(a(\gamma) v_\star + w(s)) - Df(a(\gamma) v_\star)  \\
	& \qquad \qquad \qquad \qquad - Df(R_\theta v_\infty + \zeta(s)) + Df(R_\theta v_\infty) ds (\zeta_1 -\zeta_2) \hat{v}\Big\|_{L^2_\eta} \\
	& =: T_{10} + T_{11}.
\end{align*}
Now for every $s \in [0,1]$ we have
\begin{align} \label{Lemma4.6proof1e}
\begin{split}
	& \| Df(a(\gamma) v_\star + w(s)) - Df(a(\gamma) v_\star) \|_{L^\infty} \le C \| w_2 + s(w_1 - w_2) \|_{L^\infty} \\
	& \le C \max \left\{ \| w_1\|_{L^\infty}, \|w_2 \|_{L^\infty}\right\} \le C \max \left\{ \| \w_1\|_{X^1_\eta}, \|\w_2 \|_{X^1_\eta} \right\},
\end{split}
\end{align}
where we used that the Sobolev embedding implies for $i \in \{1,2\}$
\begin{align*}
	\| w_i \|_{L^\infty} \le \| w_i -\zeta_i \hat{v} \|_{L^\infty} + |\zeta_i| & \le C \| w_i -\zeta_i \hat{v} \|_{H^1} + |\zeta_i| \le C \| \w_i \|_{X^1_\eta}.
\end{align*}
Then \eqref{Lemma4.6proof1e} yields
\begin{align*}
	T_{10} & \le \int_0^1 \| Df(a(\gamma) v_\star + w(s)) -  Df(a(\gamma) v_\star) \|_{L^\infty} ds \| w_1 - \zeta_1 \hat{v} - w_2 + \zeta_2 \hat{v} \|_{L^2_\eta} \\
	& \le C \max \left\{ \| \w_1 \|_{X^1_\eta}, \| \w_2 \|_{X^1_\eta} \right\} \| \w_1 - \w_2 \|_{X^1_\eta}.
\end{align*}
Further,
\begin{align*}
	 T_{11} & \le \Big\| \int_0^1 Df(a(\gamma) v_\star + w(s)) - Df(a(\gamma) v_\star) \\
	 & \qquad \qquad - Df(R_\theta v_\infty + \zeta(s)) + Df(R_\theta v_\infty) ds (\zeta_1 -\zeta_2) \hat{v}\Big\|_{L^2_\eta(\R_-)} \\
	& + \Big\| \int_0^1 Df(a(\gamma) v_\star + w(s)) - Df(a(\gamma) v_\star) \\
	& \qquad \qquad - Df(R_\theta v_\infty + \zeta(s)) + Df(R_\theta v_\infty) ds (\zeta_1 -\zeta_2) \hat{v}\Big\|_{L^2_\eta(\R_+)} \\
	& =: T_{12} + T_{13}.
\end{align*}
We write $\kappa(s) := a(\gamma)v_\star + w(s) - R_\theta v_\infty - \zeta(s)$. Then for $s \in [0,1]$ there holds
\begin{align*}
	& \Big\| Df( a(\gamma) v_\star + w(s)) - Df(a(\gamma) v_\star) - Df(R_\theta v_\infty + \zeta(s)) + Df(R_\theta v_\infty) \Big\|_{L^\infty} \\
	& = \Big\| \int_0^1 D^2f(R_\theta v_\infty+ \zeta(s) + \kappa(s) \tau)[\kappa(s), \cdot] \\
	& \qquad \qquad - D^2f(R_\theta  v_\infty + (a(\gamma) v_\star - R_\theta v_\infty)\tau ) [a(\gamma) v_\star - R_\theta v_\infty, \cdot] d\tau \Big\|_{L^\infty} \\
	& \le \Big\| \int_0^1 D^2f(R_\theta v_\infty+ \zeta(s) + \kappa(s) \tau)[w(s)-\zeta(s), \cdot]  d\tau \Big\|_{L^\infty} \\
	& \qquad + \Big\| \int_0^1 \big( D^2f(R_\theta v_\infty+ \zeta(s) + \kappa(s) \tau) \\
	& \qquad \qquad - D^2f(R_\theta  v_\infty + (a(\gamma) v_\star - R_\theta v_\infty)\tau \big)[a(\gamma)v_\star - R_\theta v_\infty, \cdot] d\tau \Big\|_{L^\infty} \\
	& \le C \| w(s) - \zeta(s) \|_{L^\infty} + C  \int_0^1 \| \zeta(s) - (w(s) - \zeta(s)) \tau \|_{L^\infty} d\tau \le C \max \left\{ \| w_1 \|_{L^\infty}, \| w_2 \|_{L^\infty} \right\} \\
	& \le C \max \left\{ \| \w_1 \|_{X^1_\eta}, \| \w_2 \|_{X^1_\eta} \right\},
\end{align*}
where we used $|\zeta_i| \le \| w_i \|_{L^\infty}$, $i= 1,2$. So we conclude
\begin{align*}
	T_{12} \le C \max \left\{ \| \w_1 \|_{X^1_\eta}, \| \w_2 \|_{X^1_\eta} \right\} |\zeta_1 - \zeta_2|.
\end{align*}
Similarly, for every $s \in [0,1]$,
\begin{align*}
	& \Big\| Df( a(\gamma) v_\star + w(s)) - Df(a(\gamma) v_\star) - Df(R_\theta v_\infty + \zeta(s)) + Df(R_\theta v_\infty) \Big\|_{L^2_\eta(\R_+)} \\
	& \le C \| w(s) - \zeta(s) \|_{L^2_\eta(\R_+)} + C  \int_0^1 \| \zeta(s) - (w(s) - \zeta(s)) \tau \|_{L^\infty} d\tau \| a(\gamma)v_\star - R_\theta v_\infty \|_{L^2_\eta(\R_+)} \\
	& \le C \max \left\{ \| w_1- \zeta_1 \|_{L^2_\eta(\R_+)}, \| w_2- \zeta_2 \|_{L^2_\eta(\R_+)} \right\} + C \max \left\{ \| w_1 \|_{L^\infty}, \| w_2 \|_{L^\infty} \right\} \\
	& \le C \max \left\{ \| \w_1 \|_{X^1_\eta}, \| \w_2 \|_{X^1_\eta} \right\}.
\end{align*}
This yields the estimate for $T_{13}$
\begin{align*}
	T_{13} \le C \max \left\{ \| \w_1 \|_{X^1_\eta}, \| \w_2 \|_{X^1_\eta} \right\} |\zeta_1 - \zeta_2|.
\end{align*}
Summarizing, we have shown
\begin{align*}
	T_2 & = \| f(a(\gamma) v_\star + w_1) - f(a(\gamma) v_\star + w_2) - Df(v_\star)(w_1-w_2) \\
	& \quad \qquad - \hat{v}(R_\theta v_\infty + \zeta_1) - f(R_\theta v_\infty + \zeta_2)- Df(v_\infty)(\zeta_1 - \zeta_2)] \hat{v}  \|_{L^2_\eta} \\
	& \le C \left( |z| + \max \left\{  \| \w_1 \|_{X^1_\eta}, \| \w_2 \|_{X^1_\eta} \right\} \right) \| \w_1 - \w_2 \|_{X^1_\eta}.
\end{align*}
It remains to estimate the derivative given by $T_3$. We have
\begin{align*}
	T_3 & = \left\| \partial_x \Big[ f(a(\gamma) v_\star + w_1) - f(a(\gamma) v_\star + w_2) - Df(v_\star)(w_1 - w_2) \Big] \right\|_{L^2_\eta} \\
	& = \| Df(a(\gamma) v_\star + w_1)w_{1,x} + Df(a(\gamma) v_\star + w_1) a(\gamma)v_{\star,x} \\
	& \quad - Df(a(\gamma) v_\star + w_2)w_{2,x} - Df(a(\gamma) v_\star + w_2) a(\gamma)v_{\star,x} \\
	& \quad - D^2f(v_\star)[w_1 - w_2, v_{\star,x}] - Df(v_\star)(w_1-w_2)_x \|_{L^2_\eta} \\
	& = \left\| [ Df(a(\gamma)v_\star + w_1) - Df(a(\gamma) v_\star + w_2) ] w_{1,x} \right\|_{L^2_\eta} \\
	& \quad + \left\| [ Df(a(\gamma) v_\star + w_2) - Df(v_\star) ] (w_1 - w_2)_x \right\|_{L^2_\eta} \\
	& \quad + \left\| [ Df(a(\gamma) v_\star + w_1) - Df(a(\gamma) v_\star + w_2) ] (a(\gamma) v_\star - v_\star)_x \right\|_{L^2_\eta} \\
	& \quad +  \left\| [ Df(a(\gamma) v_\star + w_1) - Df(a(\gamma) v_\star + w_2) ] v_{\star,x} - D^2f(v_\star) [w_1 - w_2, v_{\star,x}] \right\|_{L^2_\eta} \\
	& = I_1 + I_2 + I_3 + \left\| \int_0^1 D^2f(a(\gamma) v_\star + w_2 + (w_1 - w_2)s) - D^2f(v_\star) ds [w_1 - w_2, v_{\star,x}] \right\|_{L^2_\eta} \\
	& = I_1 + I_2 + I_3 +I_4.
\end{align*}
Now
\begin{align*}
	I_1 & \le \| Df(a(\gamma) v_\star + w_1) - Df(a(\gamma) v_\star + w_2) \|_{L^\infty} \| w_{1,x} \|_{L^2_\eta} \\
	& \le C \| w_1 - w_2 \|_{L^\infty} \| w_{1,x} \|_{L^2_\eta} \le C \max \left\{ \| \w_1 \|_{X^1_\eta}, \| \w_2 \|_{X^1_\eta} \right\} \| \w_1 - \w_2 \|_{X^1_\eta}.
\end{align*}
In the same fashion we obtain
\begin{align*}
	I_2  & \le \| Df(a(\gamma) v_\star + w_2) - Df(v_\star) \|_{L^\infty} \| (w_1 - w_2)_x \|_{L^2_\eta}  \\
	& \le C \left( \| a(\gamma) v_\star - v_\star \|_{L^\infty} + \| w_2 \|_{L^\infty} \right)  \| \w_1 - \w_2 \|_{X^1_\eta} \\
	& \le C \left( |z| + \max \left\{ \| \w_1 \|_{X^1_\eta}, \| \w_2 \|_{X^1_\eta} \right\} \right) \| \w_1 - \w_2 \|_{X^1_\eta}
\end{align*}
and for $I_3$,
\begin{align*}
	I_3  & \le \| Df(a(\gamma) v_\star + w_1) - Df(a(\gamma)v_\star + w_2) \|_{L^\infty} \| a(\gamma) v_{\star,x} - v_{\star,x} \|_{L^2_\eta} \\
	& \le C \| w_1 - w_2 \|_{L^\infty} \| a(\gamma) v_{\star,x} - v_{\star,x} \|_{L^2_\eta} \le C |z| \| \w_1 - \w_2 \|_{X^1_\eta}. 
\end{align*}
For $I_4$ we have
\begin{align*}
	I_4 & \le C \left( \| a(\gamma) v_\star - v_\star\|_{L^\infty} + \max\{ \| w_1\|_{L^\infty}, \| w_2\|_{L^\infty} \}\right) \| w_1 - w_2 \|_{L^\infty} \| v_{\star,x} \|_{L^2_\eta} \\
	& \le  C \left( |z| + \max \left\{ \| \w_1 \|_{X^1_\eta}, \| \w_2 \|_{X^1_\eta} \right\} \right) \| \w_1 - \w_2 \|_{X^1_\eta}.
\end{align*}
Hence
\begin{align*}
	T_3 & = \left\| \Big[ f(a(\gamma) v_\star + w_1) - f(a(\gamma) v_\star + w_2) - Df(v_\star)(w_1 - w_2) \Big]_x \right\|_{L^2_\eta} \\
	& \le C \left( |z| + \max \left\{ \| \w_1 \|_{X^1_\eta}, \| \w_2 \|_{X^1_\eta} \right\} \right) \| \w_1 - \w_2 \|_{X^1_\eta}.
\end{align*}
Finally we have shown
\begin{align*}
	& \left\| r^{[f]}(z, \w_1) - r^{[f]}(z, \w_2) \right\|_{X^1_\eta} \le C \left( |z| + \max \left\{ \| \w_1 \|_{X^1_\eta}, \| \w_2 \|_{X^1_\eta} \right\} \right) \| \w_1 - \w_2 \|_{X^1_\eta}.
\end{align*}
\textbf{ii).} As in i) we frequently use the mean value theorem and the smoothness of $f$ which follows from Assumption \ref{A1}. First, we estimate
\begin{align*}
	 & \| r^{[f]}(z_1, \w) - r^{[f]}(z_2, \w) \|_{X^1_\eta} \\
	 & = \left\| \vek{f(a(\gamma_1)v_\star + w) - f(a(\gamma_1) v_\star) - f(a(\gamma_2) v_\star + w) - f(a(\gamma_2)v_\star)}{f(R_{\theta_1}v_\infty + \zeta) - f(R_{\theta_1} v_\infty) - f(R_{\theta_2} v_\infty + \zeta) - f(R_{\theta_2}v_\infty)} \right\|_{X^1_\eta} \\
	 & \le | f(R_{\theta_1} v_\infty + \zeta) - f(R_{\theta_2} v_\infty + \zeta) | + | f(R_{\theta_1} v_\infty) - f(R_{\theta_2} v_\infty) | \\
	 & \quad + \| f(a(\gamma_1)v_\star + w) - f(a(\gamma_2)v_\star + w) - \hat{v} [f(R_{\theta_1}v_\infty+\zeta) - f(R_{\theta_2}v_\infty + \zeta)] \|_{L^2_\eta}\\
	 & \quad + \| f(a(\gamma_1)v_\star) - f(a(\gamma_2)v_\star) - \hat{v}(f(R_{\theta_1} v_\infty) - f(R_{\theta_2}v_\infty)) \|_{L^2_\eta} \\
	 & \quad + \| \partial_x [f(a(\gamma_1) v_\star + w) - f(a(\gamma_2)v_\star + w)]\|_{L^2_\eta} + \| \partial_x[f(a(\gamma_1) v_\star) - f(a(\gamma_2)v_\star)]\|_{L^2_\eta} \\
	 & =: J_1 + J_2 + J_3 + J_4 + J_5 + J_6.
\end{align*}
The smoothness of $f$ implies
\begin{align*}
	J_1  & = | f(R_{\theta_1}v_\infty +\zeta) - f(R_{\theta_2} v_\infty + \zeta)| \le C |R_{\theta_1}v_\infty - R_{\theta_2}v_\infty| \le C |z_1 - z_2|.  
\end{align*}
The same holds true for $\zeta = 0$ so that $J_2 \le C |z_1 - z_2|$. Write $\kappa_1(s) := a(\gamma_2) v_\star + w +(a(\gamma_1)v_\star - a(\gamma_2)v_\star)s$ and $\kappa_2(s) := R_{\theta_2}v_\infty + \zeta + (R_{\theta_1} v_\infty - R_{\theta_2}v_\infty)s$, $s \in [0,1]$ and obtain for $J_3$,  
\begin{align*}
	J_3 & = \| f(a(\gamma_1)v_\star + w) - f(a(\gamma_2)v_\star + w) - \hat
	v[f(R_{\theta_1}v_\infty + \zeta) + f(R_{\theta_2}v_\infty + \zeta) ] \|_{L^2_\eta} \\
	&  = \Big\| \int_0^1 Df(a(\gamma_2) v_\star + w +(a(\gamma_1)v_\star - a(\gamma_2)v_\star)s) (a(\gamma_1) v_\star - a(\gamma_2) v_\star) ds \\
	& \quad - \hat{v} \int_0^1 Df(R_{\theta_2} v_\infty + \zeta +(R_{\theta_1}v_\infty - R_{\theta_2}v_\infty)s)(R_{\theta_1} v_\infty - R_{\theta_2} v_\infty ) ds \Big\|_{L^2_\eta} \\
	& \le \Big\| \int_0^1 Df(\kappa_1(s)) (a(\gamma_1)v_\star - R_{\theta_1} v_\infty \hat{v} - a(\gamma_2)v_\star + R_{\theta_2} v_\infty \hat{v}) ds \Big\|_{L^2_\eta} \\
	& \quad + \Big\| \int_0^1 [ Df(\kappa_1(s)) - Df(\kappa_2(s)) ](R_{\theta_1} v_\infty \hat{v} - R_{\theta_2}v_\infty \hat{v})ds \Big\|_{L^2_\eta} =: J_7 + J_8.
\end{align*}
We estimate $J_7$ by
\begin{align*}
	J_7 & \le C \| a(\gamma_1) v_\star - R_{\theta_1} v_\infty \hat{v} - a(\gamma_2) v_\star + R_{\theta_2} v_\infty \hat{v} \|_{L^2_\eta} \\
	& \le C \| a(\chi^{-1}(z_1)) \v_\star - a(\chi^{-1}(z_2)) \v_\star \|_{X_\eta} \\
	& \le C |z_1 - z_2|
\end{align*}
and bound $J_8$ by two terms
\begin{align*}
	J_8 & \le  \Big\| \int_0^1 [ Df(\kappa_1(s)) - Df(\kappa_2(s)) ](R_{\theta_1} v_\infty \hat{v} - R_{\theta_2}v_\infty \hat{v})ds \Big\|_{L^2_\eta(\R_-)} \\
	& \quad + \Big\| \int_0^1 [ Df(\kappa_1(s)) - Df(\kappa_2(s)) ](R_{\theta_1} v_\infty \hat{v} - R_{\theta_2}v_\infty \hat{v})ds \Big\|_{L^2_\eta(\R_+)} = J_9 + J_{10}.
\end{align*}
Then
\begin{align*}
	J_9 \le C \| \hat{v} \|_{L^2_\eta(\R_-)} |R_{\theta_1} v_\infty - R_{\theta_2} v_\infty | \le C | z_1 - z_2|
\end{align*}
and for $J_{10}$
\begin{align*}
	J_{10} & \le C |R_{\theta_1} v_\infty - R_{\theta_2} v_\infty| \int_0^1 \|\kappa_1(s)  - \kappa_2(s) \|_{L^2_\eta} ds \le C |z_1 - z_2|.
\end{align*}
Thus we have shown $J_3 \le C |z_1 - z_2|$. In particular the estimates hold for $w = 0$, $\zeta = 0$. Therefore we also have shown $J_4 \le C |z_1 - z_2|$ and it remains to estimate the spatial derivatives $J_5$ and $J_6$. We note that for arbitrary $u \in L^2_\eta$ we have by Sobolev embedding
\begin{align*}
	& \| [Df(a(\gamma_1) v_\star + w) - Df(a(\gamma_2) v_\star + w)] u \|_{L^2_\eta} \le C \| a(\gamma_1) v_\star - a(\gamma_2)v_\star \|_{L^\infty} \|u \|_{L^2_\eta} \\
	& \le C \| u \|_{L^2_\eta} \left( \| a(\gamma_1)v_\star - R_{\theta_1} v_\infty \hat{v} - a(\gamma_2)v_\star + R_{\theta_2 }v_\infty \hat{v} \|_{L^\infty} + \| R_{\theta_1} v_\infty \hat{v} - R_{\theta_2} v_\infty \hat{v}\|_{L^\infty} \right) \\
	& \le C \| u \|_{L^2_\eta} |z_1 - z_2|.
\end{align*}
This implies
\begin{align*}
	J_5 & \le \| [Df(a(\gamma_1) v_\star + w) - Df(a(\gamma_2) v_\star + w)] w_x \|_{L^2_\eta} \\
	& \quad + \| Df(a(\gamma_1) v_\star + w)a(\gamma_1)v_{\star,x} - Df(a(\gamma_2) v_\star + w) a(\gamma_2)v_{\star,x} \|_{L^2_\eta} \\
	& \le C \| w_x \|_{L^2_\eta}|z_1 -z_2| + \| [Df(a(\gamma_1) v_\star + w)- Df(a(\gamma_2) v_\star + w)] a(\gamma_1)v_{\star,x} \|_{L^2_\eta} \\
	& \quad+ C \| a(\gamma_1)v_{\star,x} - a(\gamma_2)v_{\star,x} \|_{L^2_\eta} \\
	& \le C \left( \| w_x \|_{L^2_\eta} + \| a(\gamma_1) v_{\star,x} \|_{L^2_\eta} \right) |z_1 - z_2| + C \| a(\gamma_1)v_{\star,x} - a(\gamma_2) v_{\star,x} \|_{L^2_\eta} \le C |z_1 - z_2|.
\end{align*}
In particular the same holds true for $w = 0$ and we observe $J_6 \le C |z_1 - z_2|$. Summarizing, we have shown
\begin{align*}
	\| r^{[f]}(z_1, \w) - r^{[f]}(z_2, \w) \|_{X^1_\eta} \le C_1 |z_1 - z_2|.
\end{align*}
\textbf{iii).} Since the group action is smooth and since $P_\eta$ from \eqref{projector} is a projector we have
\begin{align*}
	\Big\| \Big((I-P_\eta) - (I-P_\eta) \big(a(\cdot)\v_\star \circ \chi^{-1}\big)(z) S_\eta(z)^{-1}P_\eta \Big) \u \Big\|_{X^1_\eta} \le C \| \u \|_{X^1_\eta} \quad \forall \u \in X^1_\eta.
\end{align*}
Now the claim follows from i). \\
\textbf{iv).} By the smoothness of the group action and Lemma \ref{Lemma4.5} the
function $\big(a(\cdot) \v_\star \circ \chi^{-1}\big)(z) S_\eta (z)^{-1}$ is continuously differentiable in $z$. Therefore, we obtain the local Lipschitz estimate
\begin{align} \label{Lemma4.6proof1f}
	\| \big(a(\cdot) \v_\star \circ \chi^{-1} \big)(z_1)S_\eta(z_1)^{-1}\w - \big(a(\cdot) \v_\star \circ \chi^{-1} \big)(z_2) S_\eta(\gamma_2)^{-1}\w \|_{X^1_\eta} \le C |z_1 - z_2 | \| \w \|_{X^1_\eta}.
\end{align}
Then we use \eqref{Lemma4.6proof1f} and i) to see
\begin{align*}
	& \| r^{[w]}(z_1,\w) - r^{[w]}(z_2,\w) \|_{X^1_\eta} \\
	& \le C \| r^{[f]}(z_1,\w) - r^{[f]}(z_2,\w) \|_{X^1_\eta} \\
	& \quad + \| \big(a(\cdot) \v_\star \circ \chi^{-1}\big)(z_1)S_\eta(z_1)^{-1}P_\eta r^{[f]}(z_1,\w) - \big(a(\cdot) \v_\star \circ \chi^{-1}\big)(z_2)S_\eta(z_2)^{-1}P_\eta r^{[f]}(z_1,\w) \|_{X^1_\eta}  \\
	& \le C |z_1 - z_2|.
\end{align*}
Now we obtain using ii) and iii)
\begin{align*}
	& \| r^{[w]}(z_1,\w_1) - r^{[w]}(z_2,\w_2) \|_{X^1_\eta} \\
	& \le \| r^{[w]}(z_1,\w_1) - r^{[w]}(z_2,\w_1) \|_{X^1_\eta} + \| r^{[w]}(z_2,\w_1) - r^{[w]}(z_2,\w_2) \|_{X^1_\eta} \\
	& \le C \left( |z_1 - z_2| + \| \w_1 - \w_2 \|_{X^1_\eta} \right).
\end{align*}
\textbf{v).} Similar to iv) we have by Lemma \ref{Lemma4.5} that $S_\eta(z)^{-1}$ is locally Lipschitz w.r.t. $z$. Then we obtain
\begin{align*}
	& \left| r^{[z]}(z_1, \w_1) - r^{[z]}(z_2, \w_2) \right| = \left| S_\eta(z_1)^{-1} P_\eta r^{[f]}(z_1, \w_1) - S_\eta(z_2)^{-1} P_\eta r^{[f]}(z_2, \w_2) \right| \\
	& \le  C \left\| r^{[f]}(z_1,\w_1) - r^{[f]}(z_2,\w_2) \right\|_{X^1_\eta} + \left| (S_\eta(z_1)^{-1} - S_\eta(z_2)^{-1}) P_\eta r^{[f]}(z_2,\w_2) \right| \\
	& \le C_4 \left( |z_1 - z_2| +  \| \w_1 - \w_2 \|_{X^1_\eta} \right).
\end{align*}
\end{proof}

\sect{Nonlinear stability}
\label{sec7}
In this section we complete the proof of the main Theorem \ref{Theorem4.10} according to the following strategy. For sufficiently small initial perturbation
$\v_0$ in \eqref{CP} we show existence of a local mild solution of the
corresponding integral equations of the decomposed system \eqref{wDGL}, \eqref{gammaDGL} which reads as
\begin{align} 
	\w(t) & = e^{t\L_\eta} \w_0 + \int_0^t e^{(t-s)\L_\eta} r^{[w]}(z(s),\w(s)) ds, \label{integralwDGL}\\
	z(t) & = z_0 + \int_0^t r^{[z]} ( z(s), \w(s)) ds. \label{integralgammaDGL}
\end{align}
A Gronwall estimate then shows that the solution exists for all times, that
the perturbation $\w$ decays exponentially and that $z$ converges to the
coordinates of an asymptotic phase. Combining the results with the regularity
theory for mild solutions we infer
Theorem \ref{Theorem4.10} and thus nonlinear stability of traveling oscillating fronts. 

\begin{lemma} \label{Lemma4.7}
Let Assumption \ref{A1}, \ref{A2}, \ref{A4} and \ref{A5} be satisfied and let $0 < \mu \le \min(\mu_0,\mu_1)$ with $\mu_0$ from Theorem \ref{thm4.17} and $\mu_1$ from Lemma \ref{lemma4.18}. Then for every $0 < \varepsilon_1 < \delta$ and $0 < 2K \varepsilon_0 \le \delta$ with $K$ from Theorem \ref{semigroup} and $\delta$ from Lemma \ref{Lemma4.6}, there exists $t_\star = t_\star(\varepsilon_0,\varepsilon_1, \mu) > 0$ such that for all initial values $(z_0,\w_0) \in \R^2 \times V^1_\eta$ with
\begin{align*}
	\| \w_0 \|_{X^1_\eta} < \varepsilon_0, \quad |z_0| < \varepsilon_1
\end{align*}
there exists a unique solution $(z,\w) \in C([0,t_\star), \R^2 \times V^1_\eta)$ of \eqref{integralwDGL}, \eqref{integralgammaDGL} with
\begin{align*}
	\| \w(t) \|_{X^1_\eta} \le 2K\varepsilon_0, \quad |z(t)| \le 2 \varepsilon_1, \quad t \in [0,t_\star).
\end{align*} 
In particular, $t_\star$ is independent of  $(z_0,\w_0) \in B_{\varepsilon_1}(0) \times B_{\varepsilon_0}(0)$.
\end{lemma}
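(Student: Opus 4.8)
The plan is a Banach fixed point argument for the coupled integral system \eqref{integralwDGL}, \eqref{integralgammaDGL}. Fix $\varepsilon_0,\varepsilon_1$ and data $(z_0,\w_0)\in\R^2\times V^1_\eta$ as in the statement. For $t_\star>0$ still to be chosen, we work in the Banach space $Z_{t_\star}:=C([0,t_\star],\R^2\times V^1_\eta)$ with norm $\|(z,\w)\|_{Z}:=\sup_{0\le t\le t_\star}\big(|z(t)|+\|\w(t)\|_{X^1_\eta}\big)$, and in its closed, nonempty subset
\[
  \mathcal{M}:=\big\{(z,\w)\in Z_{t_\star}:\ |z(t)|\le 2\varepsilon_1,\ \|\w(t)\|_{X^1_\eta}\le 2K\varepsilon_0\ \text{for all }t\in[0,t_\star]\big\},
\]
which contains the constant map $(z_0,\w_0)$. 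On $\mathcal{M}$ we define $\Phi=(\Phi_1,\Phi_2)$ by the right-hand sides of \eqref{integralgammaDGL} and \eqref{integralwDGL}, respectively. Two structural remarks make this well posed: $r^{[w]}(z,\w)\in V^1_\eta$ by construction, since both summands of \eqref{wremainder} carry a factor $I-P_\eta$, so the variation-of-constants integral stays in $V^1_\eta$; and $t\mapsto e^{t\L_\eta}\w_0$ is continuous into $V^1_\eta$ while $r^{[w]},r^{[z]}$ are continuous in their arguments (Theorem \ref{semigroup}, Lemma \ref{Lemma4.6}), so $\Phi$ maps $Z_{t_\star}$ into itself. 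The a priori bounds $\|\w(t)\|_{X^1_\eta}\le 2K\varepsilon_0\le\delta$ and $|z(t)|\le 2\varepsilon_1$ built into $\mathcal{M}$ ensure that the arguments of $r^{[f]},r^{[w]},r^{[z]}$ stay in the neighborhood of validity of Lemma \ref{Lemma4.6} and of the decomposition of Section \ref{sec5}.

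For the self-mapping property we use $r^{[f]}(z,0)=0$, hence also $r^{[w]}(z,0)=0$ and $r^{[z]}(z,0)=0$, so that Lemma \ref{Lemma4.6} iii) and v) give for $(z,\w)\in\mathcal{M}$ the linear bounds $\|r^{[w]}(z(s),\w(s))\|_{X^1_\eta}\le C(2\varepsilon_1+2K\varepsilon_0)\,2K\varepsilon_0$ and $|r^{[z]}(z(s),\w(s))|\le C\,2K\varepsilon_0$, with $C=C(\mu)$. Since $e^{-\nu(t-s)}\le1$, Theorem \ref{semigroup} bounds $\|e^{(t-s)\L_\eta}\w\|_{X^1_\eta}\le K\|\w\|_{X^1_\eta}$ for $\w\in V^1_\eta$, whence
\begin{align*}
  \|\Phi_2(z,\w)(t)\|_{X^1_\eta}&\le Ke^{-\nu t}\|\w_0\|_{X^1_\eta}+Kt_\star\,C(2\varepsilon_1+2K\varepsilon_0)\,2K\varepsilon_0\le K\varepsilon_0+Kt_\star\,C(2\varepsilon_1+2K\varepsilon_0)\,2K\varepsilon_0,\\
  |\Phi_1(z,\w)(t)|&\le|z_0|+t_\star\,C\,2K\varepsilon_0\le\varepsilon_1+t_\star\,C\,2K\varepsilon_0.
\end{align*}
Choosing $t_\star=t_\star(\varepsilon_0,\varepsilon_1,\mu)$ small enough, the right-hand sides are bounded by $2K\varepsilon_0$ and $2\varepsilon_1$ respectively, so $\Phi(\mathcal{M})\subseteq\mathcal{M}$; the required smallness of $t_\star$ depends only on $\varepsilon_0,\varepsilon_1,\mu$ and the fixed constants $K,C$, not on the particular data $(z_0,\w_0)$.

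For the contraction property, let $(z^{(1)},\w^{(1)}),(z^{(2)},\w^{(2)})\in\mathcal{M}$. Parts iii) and iv) of Lemma \ref{Lemma4.6} combine to the Lipschitz estimate $\|r^{[w]}(z^{(1)}(s),\w^{(1)}(s))-r^{[w]}(z^{(2)}(s),\w^{(2)}(s))\|_{X^1_\eta}\le C\big(|z^{(1)}(s)-z^{(2)}(s)|+\|\w^{(1)}(s)-\w^{(2)}(s)\|_{X^1_\eta}\big)$, and part v) gives the analogue for $r^{[z]}$. The terms $e^{t\L_\eta}\w_0$ and $z_0$ cancel in the differences, so applying again the semigroup bound we obtain $\|\Phi(z^{(1)},\w^{(1)})-\Phi(z^{(2)},\w^{(2)})\|_{Z}\le(K+1)C\,t_\star\,\|(z^{(1)},\w^{(1)})-(z^{(2)},\w^{(2)})\|_{Z}$. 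Shrinking $t_\star$ once more (still only in terms of $\mu$, via $K$ and $C$) makes $\Phi$ a contraction on $\mathcal{M}$, and Banach's fixed point theorem yields a unique fixed point $(z,\w)\in\mathcal{M}$. By the regularity of the integrands this fixed point is a mild solution of \eqref{integralwDGL}, \eqref{integralgammaDGL} in $C([0,t_\star],\R^2\times V^1_\eta)\subset C([0,t_\star),\R^2\times V^1_\eta)$ satisfying the stated a priori bounds, and uniqueness among solutions obeying these bounds is precisely uniqueness of the fixed point in $\mathcal{M}$.

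The argument is essentially routine once Theorem \ref{semigroup} and Lemma \ref{Lemma4.6} are available; the main points requiring care are that the iteration must close up inside $V^1_\eta$ and inside the neighborhood on which the decomposition of Section \ref{sec5} and the nonlinearity estimates of Lemma \ref{Lemma4.6} hold — which is why the a priori bounds are written into $\mathcal{M}$ — and that all of $K,\nu,C,\delta$ are uniform, so that the resulting $t_\star$ is independent of $(z_0,\w_0)\in B_{\varepsilon_1}(0)\times B_{\varepsilon_0}(0)$, as claimed.
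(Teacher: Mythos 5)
Your proposal is correct and follows essentially the same route as the paper: a Banach fixed point argument on the closed ball $\{(z,\w):|z(t)|\le2\varepsilon_1,\ \|\w(t)\|_{X^1_\eta}\le2K\varepsilon_0\}$ in $C([0,t_\star],\R^2\times V^1_\eta)$, using Theorem \ref{semigroup} and Lemma \ref{Lemma4.6} (via $r^{[f]}(z,0)=0$ for the self-map and the Lipschitz estimates for the contraction). The only differences are cosmetic — you use the crude bound $e^{-\nu(t-s)}\le1$ where the paper integrates the exponential, and you note explicitly that the $I-P_\eta$ factor keeps the iterate in $V^1_\eta$ — neither of which changes the substance of the argument.
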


\begin{proof}
Take $\nu = \nu(\mu) > 0$ from Theorem \ref{semigroup}, $C = C(\mu) > 0$ from Lemma \ref{Lemma4.6} and let $t_\star$ be so small such that the following conditions are satisfied:
\begin{align} \label{condtstar}
t_\star < \frac{\varepsilon_1}{2C ( \varepsilon_1 + K \varepsilon_0 )}, \quad  t_\star + \frac{2K}{\nu} (1- e^{-\nu t_\star}) < \frac{1}{C}.
\end{align}
The proof employs a contraction argument in the space $Z := C([0,t_\star),\R^2 \times V^1_\eta)$ equipped with the supremums norm $\| (z,\w) \|_{Z} := \sup_{t \in [0,t_\star)} \{ |z(t)| + \| \w(t) \|_{X^1_\eta} \}$. Define the map $\Gamma: Z \rightarrow Z$ given by the right hand side of \eqref{integralwDGL}, \eqref{integralgammaDGL}. We show that $\Gamma$ is a contraction on the closed set
\begin{align*}
	B := \{ (z,\w) \in Z: \| \w(t) \|_{X^1_\eta} \le 2K\varepsilon_0,\, |z(t)| \le 2\varepsilon_1,\, t\in [0,t_\star) \} \subset Z.
\end{align*}
Let $(z,\w) \in B$. By using the estimates from Theorem \ref{semigroup}, Lemma \ref{Lemma4.6} and \eqref{condtstar} we obtain for all $0 \le t < t_\star$
\begin{align*}
	& \left\| e^{t\L_\eta} \w_0 + \int_0^t e^{(t-s)\L_\eta} r^{[w]}(z(s),\w(s)) ds \right\|_{X^1_\eta} \\
	& \le K e^{-\nu t} \varepsilon_0 + K \int_0^t e^{-\nu (t-s)} \| r^{[w]}(z(s),\w(s)) \|_{X^1_\eta} ds  \\
	& \le K e^{-\nu t} \varepsilon_0 + K C \int_0^t e^{-\nu (t-s)} \| \w(s) \|_{X^1_\eta} ds \\
	& \le K\varepsilon_0 + \frac{2K^2 C \varepsilon_0}{\nu}(1 - e^{-\nu t_\star}) \le 2K\varepsilon_0
\end{align*}
and
\begin{align*}
	\left| z_0 + \int_0^t r^{[z]}(z(s),\w(s))ds \right| & \le \varepsilon_1 + \int_0^t |r^{[z]}(z(s),\w(s))| ds \\
	& \le \varepsilon_1 + C \int_0^t |z(s)| + \| \w(s) \|_{X^1_\eta} ds  \\
	& \le \varepsilon_1 + 2C(\varepsilon_1 + K\varepsilon_0) t_\star \le 2 \varepsilon_1.
\end{align*}
Hence $\Gamma$ maps $B$ into itself. Further, for $(z_1,\w_1),(z_2,\w_2) \in B$ and $0 \le t < t_\star$ we can estimate
\begin{align*}
	& \| \Gamma(z_1,\w_1) - \Gamma(z_2,\w_2) \|_{Z} \\
	& \le \sup_{t \in [0,t_\star)} \Big\{ \int_0^t | r^{[z]}(z_1(s),\w_1(s)) - r^{[z]}(z_2(s),\w_2(s))|ds \\
	& \qquad + \int_0^t  K e^{-\nu (t-s)} \| r^{[w]}(z_1(s),\w_1(s)) - r^{[w]}(z_2(s),\w_2(s)) \|_{X^1_\eta} ds \Big\} \\
	& \le \Big( C t_\star + \frac{KC}{\nu } (1- e^{-\nu t_\star}) \Big) \| (z_1-z_2, \w_1-\w_2)\|_{Z}.  
\end{align*}
By condition \eqref{condtstar}, the map $\Gamma$ is a contraction on $B$
and the assertion follows from the contraction mapping theorem.
\end{proof}

We use the following Gronwall lemma from \cite[Lemma 6.3]{BeynLorenz}.

\begin{lemma} \label{Gronwall}
Suppose $\varepsilon, \nu, C,\tilde{C} > 0$ such that
\begin{align*}
	C \ge 1, \quad \varepsilon \le \frac{\nu}{16 \tilde{C} C}
\end{align*}
and let $\varphi \in C([0,t_\infty),[0,\infty))$ for some $0 < t_\infty \le \infty$ satisfying
\begin{align*}
	\varphi(t) \le C \varepsilon e^{-\nu t} + \tilde{C} \int_0^t e^{-\nu(t-s)} \left( \varphi(s)^2 + \varepsilon \varphi(s) \right) ds, \quad \forall t \in [0,t_\infty).
\end{align*}
Then for all $0 \le t < t_\infty$ there holds
\begin{align*}
	\varphi(t) \le 2C\varepsilon e^{-\frac{3}{4} \nu t}.
\end{align*}
\end{lemma}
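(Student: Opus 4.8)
The plan is to prove this by a standard continuity (bootstrap) argument run directly on the desired conclusion. I would set $\Phi(t):=2C\varepsilon e^{-\frac{3}{4}\nu t}$ and define
\[
  T:=\sup\big\{\, t\in[0,t_\infty):\ \varphi(s)\le\Phi(s)\ \text{for all}\ s\in[0,t]\,\big\}.
\]
Because $\varphi$ is continuous and $\varphi(0)\le C\varepsilon<2C\varepsilon=\Phi(0)$, the number $T$ is well defined with $T>0$, and $\varphi\le\Phi$ on $[0,T]$. The goal is $T=t_\infty$. Assuming $T<t_\infty$ for contradiction, I would deduce the \emph{strict} inequality $\varphi(T)<\Phi(T)$; by continuity of $\varphi$ and $\Phi$ this persists on a right neighbourhood of $T$, contradicting the maximality of $T$.

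To obtain the strict bound, feed the a priori estimate $\varphi(s)\le 2C\varepsilon e^{-\frac{3}{4}\nu s}$, valid on $[0,T]$, into the right-hand side of the hypothesis. Using $\varphi(s)^2\le 4C^2\varepsilon^2 e^{-\frac{3}{2}\nu s}$ and $\varepsilon\varphi(s)\le 2C\varepsilon^2 e^{-\frac{3}{4}\nu s}$, this gives for $0\le t\le T$
\[
  \varphi(t)\le C\varepsilon e^{-\nu t}+4\tilde C C^2\varepsilon^2\int_0^t e^{-\nu(t-s)-\frac{3}{2}\nu s}\,ds+2\tilde C C\varepsilon^2\int_0^t e^{-\nu(t-s)-\frac{3}{4}\nu s}\,ds.
\]
The two integrals are elementary: $\int_0^t e^{-\nu(t-s)-\frac{3}{2}\nu s}\,ds=\tfrac{2}{\nu}\big(e^{-\nu t}-e^{-\frac{3}{2}\nu t}\big)\le\tfrac{2}{\nu}e^{-\frac{3}{4}\nu t}$ and $\int_0^t e^{-\nu(t-s)-\frac{3}{4}\nu s}\,ds=\tfrac{4}{\nu}\big(e^{-\frac{3}{4}\nu t}-e^{-\nu t}\big)\le\tfrac{4}{\nu}e^{-\frac{3}{4}\nu t}$, and both bounds are strict for $t>0$. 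The particular rate $\tfrac34\nu$ is chosen precisely so that the slowest-decaying contribution, produced by the term $\varepsilon\varphi$, still decays like $e^{-\frac{3}{4}\nu t}$ after convolution with the kernel $e^{-\nu(t-s)}$. Collecting everything and using $e^{-\nu t}\le e^{-\frac{3}{4}\nu t}$,
\[
  \varphi(t)\le\Big(C\varepsilon+\frac{8\tilde C C^2\varepsilon^2}{\nu}+\frac{8\tilde C C\varepsilon^2}{\nu}\Big)e^{-\frac{3}{4}\nu t}.
\]

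The last step is the constant bookkeeping, which is where the smallness condition enters. From $\varepsilon\le\nu/(16\tilde C C)$ one has $\tfrac{8\tilde C C^2\varepsilon^2}{\nu}=C\varepsilon\cdot\tfrac{8\tilde C C\varepsilon}{\nu}\le\tfrac12 C\varepsilon$, and, using in addition $C\ge1$, $\tfrac{8\tilde C C\varepsilon^2}{\nu}=C\varepsilon\cdot\tfrac{8\tilde C\varepsilon}{\nu}\le C\varepsilon\cdot\tfrac{1}{2C}\le\tfrac12 C\varepsilon$; hence the bracket is $\le 2C\varepsilon$, and in fact $\varphi(t)<2C\varepsilon e^{-\frac{3}{4}\nu t}$ for $t\in(0,T]$ since the integral estimates above are strict there. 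As the case $T=0$ is trivial (then $\varphi(0)\le C\varepsilon<\Phi(0)$ already), we get $\varphi(T)<\Phi(T)$ in all cases, contradicting the maximality of $T$; therefore $T=t_\infty$ and the asserted bound holds on all of $[0,t_\infty)$. There is no real analytic obstacle; the only point needing care is splitting the budget $C\varepsilon$ so that the quadratic and the $\varepsilon$-linear terms each consume at most half of it, while tracking strictness so the openness step in the continuity argument actually closes.
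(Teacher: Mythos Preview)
Your proof is correct. The paper does not give its own proof of this lemma but simply cites \cite[Lemma 6.3]{BeynLorenz}; your continuity/bootstrap argument is precisely the standard way such nonlinear Gronwall estimates are established, and the constant bookkeeping (splitting the budget $C\varepsilon$ in half between the quadratic and the $\varepsilon$-linear contributions, using $C\ge 1$ for the latter) is done cleanly and matches the hypotheses exactly.
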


Next we prove the stability result for the $(z,\w)$-systems
\eqref{integralwDGL}, \eqref{integralgammaDGL} and \eqref{wDGL}, \eqref{gammaDGL}.
The Gronwall estimate ensures that the solution from Lemma \ref{Lemma4.7} can not reach the boundary of the region of existence and therefore exists for all times. Moreover, the perturbation $\w$ of the TOF decays exponentially.
Regularity of the solution will follow by standard results from \cite{Amann} and \cite{Henry}. As in \cite{Amann}, we denote by $C^\alpha$, $\alpha \in (0,1)$ the space of H\"older continuous functions and by $C^{1 + \alpha}$ the space of differentiable functions with H\"older continuous derivative. 

\begin{theorem} \label{Theorem4.9}
Let Assumption \ref{A1}, \ref{A2}, \ref{A4} and \ref{A5} be satisfied and let $0 < \mu \le \min(\mu_0,\mu_1)$ with $\mu_0$ from Theorem \ref{thm4.17} and $\mu_1$ from Lemma \ref{lemma4.18}. Then there are $\varepsilon(\mu), \beta(\mu) > 0$ such that for all initial values $(z_0, \w_0) \in \R^2 \times V^2_\eta$ with $\| (z_0, \w_0)\|_{\R^2 \times X_\eta^1} < \varepsilon$ the following statements hold:
\begin{enumerate}[i)]
\item There are unique
\begin{align*}
	\w  \in C^{\alpha}((0,\infty),V^2_\eta) \cap C^{1+\alpha}((0,\infty),V_\eta) \cap C^1([0,\infty),V_\eta), \quad z \in C^1([0,\infty),\R^2), 
\end{align*}
for arbitrary $\alpha \in (0,1)$, satisfying \eqref{wDGL} in $X_\eta$ and \eqref{gammaDGL} in $\R^2$.
\item There exist $z_\infty = z_\infty(z_0,\w_0) \in \R^2$ and $K_0 = K_0(\mu) \ge 1$ such that for all $t \ge 0$
\begin{align*}
	\| \w(t) \|_{X_\eta^1} + |z(t) - z_\infty| \le K_0 e^{-\beta t} \|(z_0,\w_0) \|_{\R^2 \times X^1_\eta}, \quad |z_\infty| \le (K_0+1) \| (z_0,\w_0)  \|_{\R^2 \times X^1_\eta}.
\end{align*}
\end{enumerate}
\end{theorem}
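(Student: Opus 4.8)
The plan is a continuation argument. Lemma~\ref{Lemma4.7} yields a local mild solution of \eqref{integralwDGL}, \eqref{integralgammaDGL}; the semigroup decay of Theorem~\ref{semigroup} together with the Lipschitz bounds of Lemma~\ref{Lemma4.6} feeds a Gronwall estimate (Lemma~\ref{Gronwall}) that traps the solution in a small, datum-scaled ball for as long as it exists, which forces global existence and exponential decay; finally the analytic-semigroup regularity theory upgrades the mild solution to a classical one. Concretely, fix $\nu = \nu(\mu) > 0$, $K = K(\mu) \ge 1$ from Theorem~\ref{semigroup} and $C = C(\mu) > 0$, $\delta = \delta(\mu) > 0$ from Lemma~\ref{Lemma4.6}, set $C_1 := \max\!\big(2,\, 1 + \tfrac{8KC}{3\nu}\big)$, $\beta := \tfrac34 \nu$, and choose $\varepsilon = \varepsilon(\mu) > 0$ so small that $C_1 \varepsilon < \delta$ and $\varepsilon \le \tfrac{\nu}{16 K^2 C C_1}$. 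Given a datum with $\varepsilon_\star := \|(z_0,\w_0)\|_{\R^2 \times X^1_\eta} < \varepsilon$ (the case $\varepsilon_\star = 0$ is trivial), so that $|z_0| \le \varepsilon_\star$, $\|\w_0\|_{X^1_\eta} \le \varepsilon_\star$, and recalling $\w_0 \in V^2_\eta \subset V^1_\eta$, Lemma~\ref{Lemma4.7} (with $\varepsilon_0 = \varepsilon_1 \in (\varepsilon_\star,\varepsilon)$, admissible since $2K\varepsilon < \delta$) gives a unique mild solution $(z,\w)$ on a maximal interval $[0,T_{\max})$ with values in the open set $D := \{\,|z| < \delta\,\} \times \{\,\|\w\|_{X^1_\eta} < \delta\,\}$ on which $r^{[w]},r^{[z]}$ are defined; by standard continuation theory for semilinear parabolic equations (\cite{Henry}, \cite{Amann}), $T_{\max} < \infty$ forces the solution to leave every compact subset of $D$ as $t \to T_{\max}$.

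The heart of the proof is to show the solution stays in $B := \{\,|z| < C_1\varepsilon_\star,\ \|\w\|_{X^1_\eta} < 2K\varepsilon_\star\,\} \subset D$, which contains $(z_0,\w_0)$ strictly. Suppose the solution first reaches $\partial B$ at some $t_1 \in (0,T_{\max})$. On $[0,t_1]$ it lies in $\overline B$, so Lemma~\ref{Lemma4.6} applies; using $r^{[w]}(z,0)=0$ and $r^{[z]}(z,0)=0$ it gives, with $\varphi := \|\w\|_{X^1_\eta}$, the bounds $\|r^{[w]}(z(s),\w(s))\|_{X^1_\eta} \le C\big(\varphi(s)^2 + C_1\varepsilon_\star\varphi(s)\big)$ and $|r^{[z]}(z(s),\w(s))| \le C\varphi(s)$. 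Plugging the first into \eqref{integralwDGL} with Theorem~\ref{semigroup} yields, for $t \in [0,t_1]$,
\begin{align*}
	\varphi(t) \le K\varepsilon_\star e^{-\nu t} + K C C_1 \int_0^t e^{-\nu(t-s)}\big(\varphi(s)^2 + \varepsilon_\star \varphi(s)\big)\,ds ,
\end{align*}
so Lemma~\ref{Gronwall} (with $K$, $KCC_1$, $\varepsilon_\star$, $\nu$ playing the roles of $C$, $\tilde C$, $\varepsilon$, $\nu$; the smallness hypothesis holds by the choice of $\varepsilon$) gives $\varphi(t) \le 2K\varepsilon_\star e^{-\beta t}$ on $[0,t_1]$. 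In particular $\|\w(t_1)\|_{X^1_\eta} < 2K\varepsilon_\star$, and from \eqref{integralgammaDGL} and the bound on $r^{[z]}$, $|z(t_1)| \le |z_0| + C\int_0^{t_1}\varphi(s)\,ds < C_1\varepsilon_\star$, so $(z(t_1),\w(t_1))$ is strictly inside $B$ — a contradiction. Hence the solution never meets $\partial B$, stays in $\overline B$ (thus bounded away from $\partial D$), so $T_{\max} = \infty$ and $\varphi(t) \le 2K\varepsilon_\star e^{-\beta t}$ for all $t \ge 0$. Moreover $|z(t)-z(t')| \le C\int_{t'}^{t}\varphi \le \tfrac{8KC}{3\nu}\varepsilon_\star e^{-\beta t'}$, so $z_\infty := \lim_{t\to\infty} z(t)$ exists with $|z(t)-z_\infty| \le \tfrac{8KC}{3\nu}\varepsilon_\star e^{-\beta t}$ and $|z_\infty| \le |z_0| + \tfrac{8KC}{3\nu}\varepsilon_\star \le \big(1+\tfrac{8KC}{3\nu}\big)\varepsilon_\star$. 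With $K_0 := 2K + \tfrac{8KC}{3\nu} \ge 1$ this gives (ii), since $\varepsilon_\star = \|(z_0,\w_0)\|_{\R^2 \times X^1_\eta}$.

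For (i), I would invoke the regularity theory for mild solutions of semilinear equations with an analytic semigroup. Since $\L_\eta$ generates an analytic semigroup on $X_\eta$ leaving $V_\eta$ invariant, its restriction is sectorial on $V_\eta$ with domain $\D(\L_\eta)\cap V_\eta = V^2_\eta$; along the solution, $t \mapsto r^{[w]}(z(t),\w(t))$ is locally H\"older continuous on $(0,\infty)$, because $\w$ is locally H\"older there by parabolic smoothing, $z \in C^1$, and $r^{[w]}$ is locally Lipschitz by Lemma~\ref{Lemma4.6}. Standard results (\cite[Thm.~3.5.2]{Henry}, \cite{Amann}) then give that $\w$ solves \eqref{wDGL} classically with $\w(t) \in V^2_\eta$ for $t>0$, $\w \in C^\alpha((0,\infty),V^2_\eta)\cap C^{1+\alpha}((0,\infty),V_\eta)$ for all $\alpha \in (0,1)$, and — because $\w_0 \in V^2_\eta$ lies in the domain of the generator — also $\w \in C^1([0,\infty),V_\eta)$; and $z \in C^1([0,\infty),\R^2)$ follows directly from \eqref{integralgammaDGL}, since $s \mapsto r^{[z]}(z(s),\w(s))$ is continuous. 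Uniqueness of mild solutions identifies this with the solution from Lemma~\ref{Lemma4.7}, so \eqref{wDGL}, \eqref{gammaDGL} hold.

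The main obstacle I anticipate is the circular interdependence of the two decay statements — the exponential decay of $\w$ presupposes that $|z(t)|$ stays small, while the bound on $|z(t)|$ presupposes that $\|\w(\cdot)\|_{X^1_\eta}$ is integrable in time — which is exactly what the exit-time argument on the datum-scaled ball $B = B(\varepsilon_\star)$ is designed to break. A secondary technical point is to cast the integral inequality for $\varphi$ into precisely the form demanded by Lemma~\ref{Gronwall}, with the linear-in-$\varphi$ coefficient tied to $\varepsilon_\star$ rather than to the fixed radius $\delta$; this is what makes the final estimate proportional to $\|(z_0,\w_0)\|_{\R^2 \times X^1_\eta}$ rather than merely to $\varepsilon$.
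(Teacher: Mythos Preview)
Your proof is correct and follows essentially the same strategy as the paper: both combine the local existence from Lemma~\ref{Lemma4.7}, the semigroup decay from Theorem~\ref{semigroup}, and the nonlinear estimates from Lemma~\ref{Lemma4.6} to feed the Gronwall Lemma~\ref{Gronwall}, then conclude global existence by a continuation/bootstrap argument and upgrade to a classical solution via analytic-semigroup regularity theory. The only notable difference is the packaging of the continuation step---the paper explicitly re-applies Lemma~\ref{Lemma4.7} at time $t_\infty - \tfrac12 t_\star$ (exploiting that $t_\star$ is uniform over the admissible ball of initial data) to derive the contradiction, whereas you phrase it as an exit-time argument from the datum-scaled ball $B(\varepsilon_\star)$ and invoke the standard blow-up alternative; both are valid and lead to the same constants $K_0 = 2K + \tfrac{8KC}{3\nu}$ and $\beta = \tfrac34\nu$.
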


\begin{proof}
Recall the constants $K = K(\mu), \nu = \nu(\mu)$ from Theorem \ref{semigroup} and $C = C(\mu), \delta = \delta(\mu)$ from Lemma \ref{Lemma4.6}. We choose $C_0, \varepsilon, \tilde{\varepsilon}> 0$ such that $0 < 2K \tilde{\varepsilon} < \delta$ and 
\begin{align} \label{condeps}
	\varepsilon < \min \left( \frac{\delta}{C_0}, \frac{\tilde{\varepsilon}}{4K}, \frac{\nu}{16K^2CC_0} \right), \quad C_0 > 2 + \frac{16C K}{3\nu}.
\end{align}
Let us  abbreviate $\xi_0 :=\| (z_0,w_0) \|_{\R^2 \times X^1_\eta} < \varepsilon$ and set
\begin{align*}
	t_\infty : = \sup \Big\{ T>0:\, \exists! (z,\w) \in C([0,T),& \R^2 \times V_\eta) \text{ satisfying \eqref{integralwDGL}, \eqref{integralgammaDGL} on } [0,T) \\
	&  \text{and } \| \w(t) \|_{X^1_\eta} \le K\tilde{\varepsilon},\, |z(t)| \le C_0 \xi_0,\, t \in [0,T) \Big\}.
\end{align*}
Then Lemma \ref{Lemma4.7} with $\varepsilon_0 = \tilde{\varepsilon}$ and $\varepsilon_1 = \frac{C_0 \xi_0}{2} < \delta$ implies $t_\infty \ge t_\star = t_\star(\varepsilon_0,\varepsilon_1,\mu)$ and we denote the unique solution by $(z,\w)$. Using Theorem \ref{semigroup} and Lemma \ref{Lemma4.6} we estimate for all $0 \le t < t_\infty$
\begin{align*}
	\| \w(t) \|_{X^1_\eta} & \le \| e^{t\L_\eta} \w_0 \|_{X^1_\eta} + \int_0^t \| e^{(t-s)\L_\eta} r^{[w]}(z(s),\w(s)) \|_{X^1_\eta} ds \\
	& \le Ke^{-\nu t} \| \w_0 \|_{X^1_\eta} + \int_0^t e^{-\nu (t-s)} \| r^{[w]}(z(s),\w(s) \|_{X^1_\eta} ds \\
	& \le Ke^{-\nu t} \| \w_0 \|_{X^1_\eta} + KC \int_0^t e^{-\nu(t-s)} \left( |z(s)| + \| \w(s) \|_{X^1_\eta} \right) \| \w(s) \|_{X^1_\eta} ds \\
	& \le Ke^{-\nu t} \xi_0 + KC C_0 \int_0^t e^{-\nu(t-s)} \left( \xi_0 + \| \w(s) \|_{X^1_\eta} \right) \| \w(s) \|_{X^1_\eta} ds.
\end{align*}
Then the Gronwall estimate in Lemma \ref{Gronwall} implies due to \eqref{condeps}
\begin{align} \label{stability:proof1}
	\| \w(t) \|_{X^1_\eta} \le 2K e^{-\frac{3}{4}\nu t} \xi_0 < 2K e^{-\frac{3}{4}\nu t} \varepsilon < \frac{\tilde{\varepsilon}}{2}, \quad t \in [0,t_\infty).
\end{align}
This yields
\begin{align}
\begin{split} \label{stability:proof2}
	|z(t)| & \le |z_0| + \int_0^t |r^{[z]}(z(s),\w(s))|ds \le \xi_0 + C \int_0^t \| \w(s) \|_{X^1_\eta} ds \\
	& \le \xi_0 + 2KC\xi_0 \int_0^t e^{-\frac{3}{4}\nu s} ds \le \xi_0 + \frac{8CK}{3\nu} \xi_0 < \frac{C_0 \xi_0}{2}, \quad t \in [0,t_\infty).
\end{split}
\end{align}
We show that $t_\infty < \infty$ leads to a contradiction. The estimates \eqref{stability:proof1}, \eqref{stability:proof2} imply
\begin{align*}
	\| \w(t_\infty - \tfrac{1}{2}t_\star) \|_{X^1_\eta} < \frac{\tilde{\varepsilon}}{2} =\varepsilon_0, \quad |z(t_\infty - \tfrac{1}{2}t_\star)| < \frac{C_0 \xi_0}{2} = \varepsilon_1.  
\end{align*}
Now we can apply Lemma \ref{Lemma4.7} once again to the integral equations \eqref{integralwDGL}, \eqref{integralgammaDGL} with $\w_0 = \w(t_\infty - \tfrac{1}{2}t_\star)$ and $z_0 = z(t_\infty - \tfrac{1}{2}t_\star)$ and obtain a solution $(\tilde{z}, \tilde{\w})$ on $[0,t_\star)$ with
\begin{alignat*}{3}
	& \tilde{\w}(0) = \w(t_\infty - \tfrac{1}{2}t_\star), & \quad & \| \w(t)\|_{X^1_\eta} \le K\tilde{\varepsilon},\, & \quad &  t\in[0,t_\star) \\
	& \tilde{z}(0) = z(t_\infty - \tfrac{1}{2}t_\star), & \quad & |z(t)| \le C_0 \xi_0,\, & \quad & t \in [0,t_\star).
\end{alignat*}
Define
\begin{align*}
	(\bar{z},\bar{\w})(t) := \begin{cases} (z,\w)(t), & t \in [0,t_\infty - \tfrac{1}{2} t_\star ] \\
	(\tilde{z}, \tilde{\w})(t - t_\infty + \tfrac{1}{2}t_\star), & t \in (t_\infty - \tfrac{1}{2}t_\star,t_\infty + \tfrac{1}{2}t_\star).
	\end{cases}
\end{align*}
Then $(\bar{z}, \bar{\w})$ is a solution on $[0,t_\infty + \tfrac{1}{2}t_\star)$ with $\| \bar{\w}(t) \|_{X^1_\eta} \le K \tilde{\varepsilon}$ and $|\bar{z}(t)| \le C_0 \xi_0$.
  This contradicts the definition of $t_\infty$. Hence $t_\infty = \infty$ and \eqref{stability:proof1} holds on $[0,\infty)$. Further, we see that the integral
\begin{align*}
	z_\infty := z_0 + \int_0^\infty r^{[z]}(z(s),\w(s))ds
\end{align*}
exists since
\begin{align*}
	|z(t) - z_\infty| & \le \int_t^\infty |r^{[z]}(z(s),\w(s))|ds  \\
	& \le C \int_t^{\infty} \| \w(s)\|_{X^1_\eta} \le 2KC\xi_0 \int_t^\infty e^{-\frac{3}{4} \nu s} ds = \frac{8KC}{3 \nu} e^{-\frac{3}{4} \nu t } \xi_0.
\end{align*}
Thus the first estimate in ii) is proven with $K_0 = 2K + \frac{8KC}{3 \nu}$ and $\tilde{\beta} = \frac{3}{4} \nu$. The second estimate is obtained by
\begin{align*}
	|z_\infty| \le |z(0) - z_\infty| + |z_0| \le (K_0 + 1) \xi_0.
\end{align*}
It remains to show the regularity of $(z,\w)$. By Lemma \ref{Lemma4.7} one infers $r^{[z]}(z(\cdot), \w(\cdot)) \in C([0,\infty), \R^2)$ and thus $z \in C^1([0,\infty),\R^2)$. Furthermore, let $r(t) := r^{[w]}(z(t),\w(t))$. Suppose $0 \le s \le t < \infty$. Then by Lemma \ref{Lemma4.7} we find some $C_r > 0$ such that
\begin{align*}
	\| r(t) - r(s) \|_{X_\eta} & = \| r^{[w]}(z(t),\w(t)) - r^{[w]}(z(s),\w(s)) \|_{X_\eta} \\
	& \le C \left( |z(t) - z(s)| + \| \w(t) - \w(s) \|_{X^1_\eta} \right) \\
	& \le C \left( \int_s^t | r^{[z]}(z(\sigma), \w(\sigma)) | d\sigma + \int_s^t \| r^{[w]}(z(\sigma), \w(\sigma)) \|_{X^1_\eta} d\sigma \right) \\
	& \le C \bigg( C \int_s^t \| \w(\sigma) \|_{X^1_\eta}  d\sigma + C \int_s^t |z(\sigma)| + \| \w(\sigma) \|_{X^1_\eta} d\sigma \bigg) \le C_r (t-s).  
\end{align*} 
This implies $r \in C^\alpha ([0,\infty), X_\eta)$ for every $\alpha \in (0,1)$ and
  for arbitrary $s > 0$, 
\begin{align*}
	\int_0^s \| r(t) \|_{X_\eta} dt & = \int_0^s \| r^{[w]}(z(t), \w(t)) \|_{X_\eta} dt \le C\int_0^s \| \w(t) \|_{X^1_\eta} dt < \infty.
\end{align*}
Now the regularity of $\w$ is a consequence of the well known theory of semilinear parabolic equations and can be concluded, for instance, using \cite[Thm. 1.2.1]{Amann} \cite[Thm. 3.2.2]{Henry}.
\end{proof}

We conclude with the

\begin{proof}[Proof of Theorem \ref{Theorem4.10}]
We choose $\mu_0$ from Theorem \ref{thm4.17} and possibly decrease it further such that $\mu_0 \le \mu_1$ with $\mu_1$ from Lemma \ref{lemma4.18}. We take the sets $V,W$ from Lemma \ref{lemmatrafo} and let $\delta > 0$ be so small such that the ball $B_\delta = \{ \u \in X_\eta: \| \u \|_{X_\eta} \le \delta\}$ is contained in the image of  $V$ under $T_\eta$ and its projection $P_\eta(B_\delta)$ in the image of $W$ under $\Pi_\eta$, i.e. $B_\delta \subset T_\eta(V)$ and $P_\eta(B_\delta) \subset \Pi_\eta(W)$. Then the inverse maps $T_\eta^{-1}$, $\Pi_\eta^{-1}$ exist on $B_\delta$, respectively $P_\eta(B_\delta)$, and are diffeomorphic. Moreover, let
\begin{align*}
	C_\Pi := \sup_{\v \in B_\delta} \frac{| \Pi_\eta^{-1}(P_\eta\v) |}{\| \v \|_{X_\eta}}
\end{align*}
and, since the group action is smooth, we find $C \ge 1$ such that
\begin{align*}
	\| a(\chi^{-1}(z_1))\v_\star - a(\chi^{-1}(z_2))\v_\star \|_{X^1_\eta} \le C |z_1 - z_2| \quad \forall z_1,z_2 \in \Pi_\eta^{-1}(P_\eta (B_\delta)).
\end{align*}
Decrease $\varepsilon > 0$ from Theorem \ref{Theorem4.9} such that the  solution $(z,\w)$ of \eqref{wDGL}, \eqref{gammaDGL}  for initial values smaller than $\varepsilon$ satisfy $\w(t) \in T_\eta^{-1}(B_\delta)$ and $z(t) \in \Pi_\eta^{-1} (P_\eta(B_\delta))$ for all $t \in [0,\infty)$. \\
We restrict the size of the initial perturbation $\v_0$ by the condition
\begin{align*}
	\varepsilon_0 < \min \left( \frac{\varepsilon}{C_\Pi (1+C) + 1}, \frac{\pi}{2K_0 +1}, \frac{\delta}{2K_0(3C+1)} \right)
\end{align*}
with $K_0$ from Theorem \ref{Theorem4.9}. The initial values for the $(z,\w)$-system
are defined by
\begin{align*}
	(z_0, \w_0) := T_\eta^{-1} ( \v_0 ) = \big(\Pi_\eta^{-1}(P_\eta \v_0), \v_0 + \v_\star - a(\chi^{-1}(z_0)) \v_\star \big).
\end{align*}
Then $|z_0| \le C_\Pi \| \v_0 \|_{X_\eta}$ holds and
\begin{align*}
\begin{split}
	\| (z_0, \w_0) \|_{\R^2 \times X_\eta^1} \le |z_0| + \| a(\chi^{-1}(z_0))\v_\star - \v_\star \|_{X^1_\eta} + \| \v_0 \|_{X^1_\eta} \le C_\Pi (1+C)\varepsilon_0 + \varepsilon_0 < \varepsilon.
\end{split}
\end{align*}
Thus, by Theorem \ref{Theorem4.9}, there are $z \in C^1([0,\infty),\R^2)$ and $\w \in C((0,\infty),V^2_\eta) \cap C^1((0,\infty),V_\eta)$ such that $(z,\w)$ solves \eqref{wDGL}, \eqref{gammaDGL} with $z(0) = z_0$, $\w(0) = \w_0$ and
\begin{align*}
	\| \w(t) \|_{X^1_\eta} \le K_0 \varepsilon_0, \quad |z(t)| \le |z(t) - z_\infty| + |z_\infty| \le (2K_0 + 1)\varepsilon_0 < \pi, \quad t \in [0,\infty).
\end{align*}
Hence, $z(t)$ lies in the chart $(U,\chi)$ for all $t \in [0,\infty)$ and we can define $\gamma = \chi^{-1}(z) \in C^1([0,\infty),\G)$. Set
\begin{align*}
	\u(t) = a(\gamma(t)) \v_\star + \w(t), \quad t \in [0,\infty).
\end{align*}
Then $\u \in  C((0,\infty), Y_\eta) \cap C^1([0,\infty),X_\eta)$ and by Lemma \ref{lemmatrafo} and the construction of the decomposition in section \ref{sec5}, we conclude $\u_t = \F(\u)$ and $\u(0) = \v_\star + \v_0$. \\
With $\gamma_\infty = \chi^{-1}(z_\infty)$ we have by Theorem \ref{Theorem4.9},
\begin{align*}
	\| \w(t) \|_{X^1_\eta}& + |\gamma(t) - \gamma_\infty|_G  = \| \w(t) \|_{X^1_\eta} + |z(t) - z_\infty| \\
	& \le K_0 e^{-\beta t}\| (z_0, \w_0) \|_{\R^2 \times X^1_\eta}
	 \le K e^{-\beta t} \| \v_0 \|_{X^1_\eta}, 
\end{align*}
where $K = C_\Pi (1 + C)K_0+K_0$. We further estimate the asymptotic phase,
\begin{align*}
	|\gamma_\infty|_\G & \le |\gamma_0|_\G + |\gamma_0 - \gamma_\infty|_\G 
	 = |z_0| + |z_0 - z_\infty| \\
	& \le C_\Pi \| \v_0\|_{X_\eta^1} + K_0 \| (z_0,\w_0) \|_{\R^2 \times X^1_\eta} \le C_\infty \| \v_0 \|_{X^1_\eta}
\end{align*}
with $C_\infty = C_\Pi(1 + K_0) + K_0(1 + CC_\Pi)$. Finally, we show uniqueness of $\u$.
First note
\begin{align*}
	\| \u(t) - \v_\star \|_{X_\eta} \le C |z(t) - z_\infty| + \| \w(t) \|_{X_\eta} + C |z_\infty| \le (3C + 1) K_0 \varepsilon_0 \le \frac{\delta}{2}.
\end{align*}
Assume there is another solution $\tilde{\u}$ of \eqref{CP} on $[0,T)$ for some $T > 0$. Let
\begin{align*}
	\tau := \sup \{ t \in [0,T): \| \tilde{\u} - \v_\star \|_{X_\eta} \le \delta \text{ on } [0,t) \}.
\end{align*}
Then there is a solution $(\tilde{z}, \tilde{\w})$ of \eqref{wDGL}, \eqref{gammaDGL} on $[0,\tau)$ such that $T_\eta(\tilde{z}(t), \tilde{\w}(t)) = \tilde{\u}(t) - \v_\star$ and, therefore, $\tilde{\u}(t) = a(\tilde{\gamma}(t))\v_\star + \tilde{\w}(t)$, $\tilde{\gamma}(t) = \chi^{-1}(\tilde{z}(t))$. But since $(z,\w)$ is unique we conclude $(\tilde{z}, \tilde{\w}) = (z,\w)$ and $\u(t) = \tilde{\u}(t)$ on $[0,\tau)$. Now assume
    $\tau < T$. Then we  have
\begin{align*}
	\frac{\delta}{2} \ge \| \u(t) - \v_\star\|_{X_\eta} = \| \tilde{\u}(t) - \v_\star\|_{X_\eta} \quad \text{for all} \; t \in [0,\tau).
\end{align*}
Since the right-hand side converges to $\delta$ as $t \rightarrow \tau$, we arrive at a contradiction.
\end{proof}

\sect{Appendix} \label{secA}
Consider the differential operator
\begin{align*}
  L_0 u = A u'' + c u', 
  \end{align*}
where $c >0$ and $ A \in \R^{m,m}$ satisfies $\Re(\lambda)>0$ for all
$\lambda \in \sigma(A)$. 
\begin{lemma}[Limits of solutions] \label{lemmaA1}
  Let $r \in C(\R,\R^{m})$ have limits $\lim_{x \to \pm \infty}r(x)$ and
  let $v \in C^2(\R,\R^m)$ be a bounded solution of $L_0 v = r$.
  Then the following limits exist and vanish
  \begin{equation*}
    \lim_{x \to \pm \infty} r(x) = 0 = \lim_{x \to \pm \infty}v'(x)=
    \lim_{x \to \pm \infty}v''(x).
  \end{equation*}
\end{lemma}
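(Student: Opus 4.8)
The plan is to reduce the second–order equation to a first–order linear system for $w:=v'$ and exploit that the coefficient matrix is Hurwitz. Since $\Re\lambda>0$ for all $\lambda\in\sigma(A)$, the matrix $A$ is invertible, and $L_0v=r$ becomes $w'=Bw+g$ with $B:=-cA^{-1}$ and $g:=A^{-1}r$. The spectrum of $B$ is $\{-c/\lambda:\lambda\in\sigma(A)\}$, and $\Re(-c/\lambda)=-c\,\Re\lambda/|\lambda|^{2}<0$, so $B$ is Hurwitz: there are $M\ge1$, $\beta>0$ with $\|e^{tB}\|\le Me^{-\beta t}$ for $t\ge0$, and $\int_0^\infty e^{tB}\,dt=-B^{-1}=c^{-1}A$. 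Moreover $r$ is continuous with finite limits $r_\pm:=\lim_{x\to\pm\infty}r(x)$, hence bounded on $\R$, and so is $g$, with $g_\pm:=A^{-1}r_\pm$.

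As a preliminary step I would show that $v'$ (and then $v''$) is bounded on all of $\R$, which is \emph{not} part of the hypotheses but is needed for the analysis at $-\infty$. On any interval $[a,a+\ell]$, expanding $v$ by Taylor's theorem about an interior point gives a Landau--Kolmogorov type estimate $\|v'\|_{\infty,[a,a+\ell]}\le\frac{2}{\ell}\|v\|_{\infty}+\ell\,\|v''\|_{\infty,[a,a+\ell]}$, while the equation yields $\|v''\|_{\infty,[a,a+\ell]}\le\|A^{-1}\|\,(\|r\|_{\infty}+c\,\|v'\|_{\infty,[a,a+\ell]})$. Choosing $\ell$ so small that $c\,\ell\,\|A^{-1}\|\le\frac12$ and absorbing the $\|v'\|$ term produces a bound on $\|v'\|_{\infty,[a,a+\ell]}$ that is independent of $a$; hence $v'\in L^{\infty}(\R)$, and consequently $v''\in L^{\infty}(\R)$.

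For the behaviour at $+\infty$, set $u:=w+B^{-1}g_+$, so that $u'=Bu+(g-g_+)$; variation of constants from a base point $x_0$ gives $u(x)=e^{(x-x_0)B}u(x_0)+\int_{x_0}^{x}e^{(x-s)B}(g(s)-g_+)\,ds$. The first term vanishes as $x\to\infty$ by the Hurwitz bound, and the integral vanishes by splitting the range at a point beyond which $|g-g_+|$ is small and using $\int e^{-\beta(x-s)}\,ds\le\beta^{-1}$. Hence $v'(x)=w(x)\to-B^{-1}g_+=c^{-1}r_+$. If $r_+\neq0$, then testing against the unit vector $e:=r_+/|r_+|$ gives $\frac{d}{dx}\langle e,v(x)\rangle\to c^{-1}|r_+|>0$, so $\langle e,v(x)\rangle\to+\infty$, contradicting boundedness of $v$. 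Therefore $r_+=0$, whence $v'(x)\to0$ and, from $v''=Bw+g$, also $v''(x)\to0$ as $x\to+\infty$. At $-\infty$ I would use that $w=v'$ is bounded (preliminary step): on $(-\infty,0]$ the bounded function $u:=w+B^{-1}g_-$ solves $u'=Bu+(g-g_-)$, so it must equal the unique bounded solution $u(x)=\int_{-\infty}^{x}e^{(x-s)B}(g(s)-g_-)\,ds$ (any other solution differs by $e^{xB}c_0$, which blows up as $x\to-\infty$ unless $c_0=0$); substituting $\sigma=x-s$ and applying dominated convergence with dominating function $Me^{-\beta\sigma}\|g-g_-\|_{\infty}$ gives $u(x)\to0$, hence $v'(x)\to c^{-1}r_-$, and boundedness of $v$ again forces $r_-=0$, $v'(x)\to0$, $v''(x)\to0$.

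I expect the main obstacle to be the preliminary boundedness of $v'$: the hypothesis only controls $v$, yet boundedness of $v'$ near $-\infty$ is precisely what kills the growing homogeneous mode $e^{xB}c_0$ and makes the one-sided variation-of-constants argument work; the interpolation estimate above is the device that supplies it. Once that is in place, the remainder is the routine Hurwitz-matrix/variation-of-constants bookkeeping described above.
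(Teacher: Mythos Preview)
Your argument is correct, and it takes a genuinely different route from the paper's. The paper works directly with the second-order equation: it writes the general solution on $\R_+$ using the fundamental pair $Y_1=I$, $Y_2(x)=\exp(-cA^{-1}x)$ and a Green's-function particular solution $v_3$, then extracts a linear lower bound on $|v_3(x)|$ when $r_+\neq0$ to force a contradiction with boundedness of $v$; on $\R_-$ it reflects to $\R_+$ with $c<0$, uses boundedness of $v$ alone to kill the growing homogeneous mode ($a_2=0$), and repeats the lower-bound argument. You instead pass to the first-order equation for $w=v'$, shift by the equilibrium $-B^{-1}g_\pm$, and read off $\lim_{x\to\pm\infty}v'(x)=c^{-1}r_\pm$ directly from variation of constants, which makes the contradiction with boundedness of $v$ immediate and avoids the explicit lower-bound computation. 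The price you pay is the preliminary interpolation step establishing $v'\in L^\infty$: the paper never needs this because its growing mode lives in $v$ itself, whereas in your first-order formulation the growing mode $e^{xB}c_0$ lives in $v'$, so you must know $v'$ is bounded at $-\infty$ to eliminate it. Your Landau--Kolmogorov argument handles this cleanly. Overall your proof is somewhat more streamlined once the interpolation is in place, while the paper's is more self-contained in that it uses only the stated hypothesis on $v$.
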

\begin{proof}Consider first $x \ge 0$.
 Then we can write $v$ for some $a_1,a_2 \in \R^m$ as
  \begin{align} \label{eqA:decompv} v(x) = Y_1(x) a_1 + Y_2(x) a_2 + v_3(x),
  \end{align}
  where $Y_1(x)=I$ and $Y_2(x)=\exp(- c A^{-1}x)$ form a fundamental
  system for $L_0$  and $v_3$ solves $L_0v_3=r$, $v_3(0)=v_3'(0)=$, i.e.
  \begin{equation} \label{eqA:solformula}
    v_3(x) = \int_0^{\infty} G(x,\xi) r(\xi) d\xi, \quad
    G(x,\xi)= \begin{cases} \frac{1}{c} (I -Y_2(x-\xi)), & 0 \le \xi \le x,\\            0,  & 0 \le x < \xi .
    \end{cases}
  \end{equation}
  By the positivity of $A$ and $c$ we have $|Y_2(x)| \le C \exp(- b x), x \ge 0$
  for some $b >0$. Since $v,Y_1,Y_2$ are bounded on $\R_+$, so is $v_3$.
  If $r_+=\lim_{x \to \infty} r(x) \neq 0$ then we have the following lower bound for $0 < x_0< x$
  \begin{align*}
    \big| v_3(x)\big|  \ge&\, \Big| \int_{x_0}^x  G(x,\xi)d\xi r_+\Big|  - \Big| \int_0^{x_0} G(x,\xi) r(\xi) d\xi\Big| - \Big|  \int_{x_0}^x G(x,\xi)(r(\xi)-r_+) d\xi \Big| \\
    \ge & c^{-1} \big((x - x_0)|r_+| - 2 C |A|c^{-1}|r_+| - (1+C)\|r\|_{L^{\infty}} x_0 - (x-x_0)(1+C) \sup_{\xi \ge x_0}|r(\xi)- r_+| \big).
  \end{align*}
  The last term can be absorbed into the first term by taking $x_0$ large,
  and the resulting term dominates the middle terms as $x \to \infty$. Hence $v_3$
  is unbounded and we arrive at a contradiction.
  For the derivative we find
  \begin{align*}
    v'(x) = Y_2'(x) a_2 - \frac{1}{c} \int_0^x Y_2'(x-\xi) r(\xi) d\xi,
  \end{align*}
  which together with $r_+=0$ and the exponential decay of $Y_2'$ yields
  $\lim_{x\to \infty}v'(x)=0$. 

  Instead of considering $L_0$ on $\R_-$ we reflect domains and consider
  $L_0$ on $\R_+$ but now with $c <0$. Formulas \eqref{eqA:decompv}
  and \eqref{eqA:solformula} still hold but with the Green's function given
  by
  \begin{align*}
    G(x,\xi)= \frac{1}{c} \begin{cases}
      I- \exp(c A^{-1} \xi) , & 0 \le \xi \le x, \\
      \exp(cA^{-1}(\xi-x))- \exp(c A^{-1}\xi), & 0 \le x < \xi.
    \end{cases}
    \end{align*}
    Note that $c <0$ implies an estimate
    \begin{align*}
      |G(x,\xi)| \le C \begin{cases} 1, &0 \le \xi \le x,\\
        \exp(-b(\xi - x)),& 0 \le x < \xi.
      \end{cases}
      \end{align*}
    Hence the integral in \eqref{eqA:solformula} converges and provides a linear
    upper bound for $v_3(x)$. Since $Y_2(x) a_2$ grows exponentially if $a_2 \neq 0$, we obtain $a_2=0$ from the boundedness of $v$. As in case $c >0$
    we then derive a linear lower bound for $|v_3(x)|$ if $r_+ \neq 0$.
    In this way, we find again $r_+=0$ and then $\lim_{x \to \infty}v'(x)=0$
    from
    \begin{align*}
      v_3'(x) = -A^{-1} \int_x^{\infty} \exp(cA^{-1}(\xi - x))r(\xi) d\xi.
    \end{align*}
\end{proof}

\begin{proof}[Proof of Lemma \ref{lem:asym}] The TOF $v_{\star}$ satisfies
  \begin{align*} L_0 v_{\star} = - S_{\omega} v_{\star}-f(v_{\star}) =:r,
  \end{align*}
  hence Lemma \ref{lemmaA1} shows
  $\lim_{x \to \pm \infty}v_{\star}'(x)=\lim_{x \to \pm \infty}v_{\star}''(x)=0$
  as well as 
  \begin{align*}
    0 =\lim_{x \to \infty} r(x) = - S_{\omega}v_{\infty} - f(v_{\infty})
    = - (S_{\omega}+g(|v_{\infty}|^2))v_{\infty}.
  \end{align*}
  This is the real version of the complex equation $(i \omega + G(|V_{\infty}|^2))V_{\infty}=0$, so that $S_{\omega}+g(|v_{\infty}|^2)=0$ follows.
  
\end{proof}

\begin{proof}[Proof of Theorem \ref{decay}.]
The profile $v_\star$ is a solution of \eqref{statcomovsys} and $f \in C^3$ by Assumption \ref{A1}. Therefore $v_\star \in C^5_b(\R,\R^2)$. For the estimate on $\R_-$ we transform \eqref{statcomovsys} into a $4$-dimensional first order system with $w = (w_1,w_2)^\top$, $w_1 = v_\star$, $w_2 = v_\star'$. Then $w$ solves
\begin{align} \label{systemminus}
	w' = \mathcal{H}(w), \quad \mathcal{H}(w) = \vek{w_2}{-A^{-1}(cw_2 + S_\omega w_1 + f(w_1))}
\end{align}
and $w = (v_\star, v_\star')^\top \rightarrow 0$ as $x \rightarrow -\infty$ (cf. Lemma \ref{lem:asym}). Now zero is an equilibrium of \eqref{systemminus} with
\begin{align*}
	D\mathcal{H}(0) = \begin{pmatrix} 0 & I_2 \\ -A^{-1} (S_\omega + Df(0)) & -cA^{-1} \end{pmatrix}, \quad
	Df(0) = \begin{pmatrix} g_1(0) & -g_2(0) \\ g_2(0) & g_1(0) \end{pmatrix}.
\end{align*}
One can show that Assumption \ref{A1} implies zero to be a hyperbolic equilibrium of \eqref{systemminus} with local stable and unstable manifolds of dimension $2$. Since convergence to hyperbolic equilibria is known to be exponentially fast (cf. \cite[Theorem 7.6]{Sideris}), we conclude the desired estimate on $\R_-$. \\
For the estimate on $\R_+$ we use an ansatz from  \cite{Saarloos} with polar coordinates,
\begin{align} \label{polar}
	v_\star(x) = r(x) \vek{\cos \phi(x)}{\sin \phi(x)},
\end{align}
and introduce the new variables $q := \phi$ and $\kappa := \frac{r'}{r}$. Plugging the ansatz \eqref{polar} into \eqref{statcomovsys} then gives the equation for $(r,q,\kappa)$,
\begin{align} \label{polarsystem}
	\begin{pmatrix}
	r \\ q \\ \kappa	
	\end{pmatrix} ' =
	\begin{pmatrix}
	r \kappa \\ 
	\vek{q^2 - \kappa^2}{-2\kappa q} - A^{-1} \vek{c\kappa + g_1(|r|^2)}{ cq + \omega + g_2(|r|^2) }
	\end{pmatrix} =: \Gamma(r,\kappa, q). 
\end{align}
We define $r_\infty := |v_\infty|$ and $\phi_\infty := \arg(v_\infty)$. Then we have $r \rightarrow r_\infty$ as $x \rightarrow \infty$, since $v_\star \rightarrow v_\infty$ as $x \rightarrow \infty$. In addition, $v_\star' \rightarrow 0$ as $x \rightarrow \infty$, by Lemma \ref{lem:asym}, which implies $r' \rightarrow 0$ 
as $x \rightarrow \infty$. Therefore we obtain $\kappa = \frac{r'}{r} \rightarrow 0$ as $x\to \infty$ and further
\begin{align*}
	r' \vek{\cos \phi}{\sin \phi} + r q \vek{-\sin \phi}{\cos \phi} = v_\star' \rightarrow 0, \quad x \rightarrow \infty.
\end{align*}
This shows $q \rightarrow 0$. Summarizing we have $(r,\kappa,q) \rightarrow (r_\infty,0,0)$ as $x \rightarrow \infty$. Now one verifies that $(r_\infty,0,0)$ is a hyperbolic equilibrium of \eqref{polarsystem} with stable manifold of dimension equal to $2$ and unstable manifold of dimension equal to $1$. Again since convergence to hyperbolic equilibria is known to be exponentially fast (cf. \cite[Theorem 7.6]{Sideris}), we find $K_0,\mu_\star > 0$ such that for $x \ge 0$,
\begin{align*}
	|(r',\kappa',q')| = |\Gamma(r,\kappa,q) - \Gamma(r_\infty,0,0) | \le C |(r,\kappa,q) - (r_\infty,0,0)| \le K_0 e^{-\mu_\star x}
\end{align*}
where we use the fact that $\Gamma \in C^1$ by Assumption \ref{A1}. Finally we find $K > 0$ such that
\begin{align*}
	|v_\star(x) - v_\infty| + |v_\star'(x)| & \le \Big| r(x) \vek{\cos \phi(x)}{ \sin \phi(x)} - r_\infty \vek{\cos \phi_\infty}{ \sin \phi_\infty} \Big| + |r'(x)| + |r(x) q(x)| \\
	& \le |r(x) - r_\infty| + |r_\infty | |\phi(x) - \phi_\infty| + |r'(x)| + \| r \|_{L^\infty} |q(x)| \\
	& \le |r(x) - r_\infty| + |r_\infty | \int_{x}^\infty |q(x)| dx + |r'(x)| + \| r \|_{L^\infty} |q(x)| \\
	& \le K e^{-\mu_\star x}
\end{align*}
for all $x \ge 0$. Since $f \in C^3$, the estimates for $v_\star''$ and $v_\star'''$ then follow by differentiating \eqref{comovsys}.
\end{proof}

\begin{proof}[Proof of Lemma \ref{lemmagroup}]
We note that translations on $L^2_\eta$ are continuous and the estimate $\| v (\cdot - \tau) \|_{L^2_\eta} \le e^{\mu |\tau|}\| v \|_{L^2_\eta}$ for all $v \in L^2_{\eta}$ holds. Further, if $v \in H^1_\eta$ it is straightforward to show $\| v(\cdot - \tau )- v \|_{L^2_\eta} \le |\tau| e^{\mu |\tau|} \| v_x \|_{L^2_\eta}$ and the same holds true if $v$ is replaced by the tremplate function $\hat{v}$. Using these facts and invariance under rotation of the norms we obtain continuity of the group action on $X_\eta$ by
\begin{align*}
 \| a(\gamma) \v \|_{X_\eta} & \le |\rho| + \| v(\cdot - \tau) - \rho \hat{v} \|_{L^2_\eta} \le |\rho| + \| v(\cdot - \tau) - \rho \hat{v} (\cdot - \tau) \|_{L^2_\eta} + |\rho| \| \hat{v}(\cdot - \tau) - \hat{v} \|_{L^2_\eta} \\
 & \le |\rho| + e^{\mu |\tau|} ( \| v - \rho \hat{v} \|_{L^2_\eta} + |\rho| |\tau| \| \hat{v}_x \|_{L^2_\eta} ) \le C \| \v \|_{X_\eta}.
\end{align*}
Using the continuity of translations on $L^2_\eta$ once again yields $\| a(\gamma) \v \|_{Y_\eta} \le C \| \v \|_{Y_\eta}$. It is easy to verify the properties 
$a(\gamma_1)a(\gamma_2) = a(\gamma_1 \circ \gamma_2)$ and 
$a(\gamma)^{-1} = a(\gamma^{-1})$ so that $a(\cdot) \in GL[X_\eta]$ is a homomorphism. The continuity of the group action in $\G$ for $\v \in X_\eta$ follows by
\begin{align*}
 & \| a(\gamma) \v - \v \|_{X_\eta}  \\
 & \le | R_{\theta}\rho -  \rho| + \| R_{\theta} v(\cdot - \tau) - R_{\theta}\rho \hat{v} - ( v - \rho \hat{v} )\|_{L_\eta^2} \\
	& \le | R_{\theta}\rho - \rho | + \| R_{\theta} ( v(\cdot - \tau) -\rho \hat{v} ) - R_{\theta} ( v - \rho \hat{v} ) \|_{L_\eta^2}+ \| R_{\theta} (v - \rho \hat{v} ) - ( v - \rho \hat{v} ) \|_{L_\eta^2} \\
	& \le |R_{\theta} - I | \left( |\rho| + \| v- \rho \hat{v}\|_{L_\eta^2} \right) + \| v(\cdot - \tau) - v\|_{L_\eta^2} \rightarrow 0 \quad \text{as} \quad (\theta,\tau) \rightarrow 0.
\end{align*}
Similarly, for $v \in Y_\eta$ we have
\begin{align*}
& \| a(\gamma) \v - \v \|_{Y_\eta}^2 = \| a(\gamma) \v - \v \|_{X_\eta}^2 + \sum_{\alpha = 1}^2 \|  R_{\theta} \partial^\alpha v(\cdot - \tau) - \partial^\alpha v \|_{L_\eta^2}^2 \rightarrow 0 \quad \text{as} \quad (\theta,\tau) \rightarrow 0.
\end{align*}
It is left to show that $a(\cdot)\v$ is of class $C^1$ for $v \in Y_\eta$ and to
compute its derivative. For this purpose it suffices to prove the assertion at $\gamma = \one = (0,0)$. Let us take $h = (h_1,h_2) \in \R^2$ small such that $\chi^{-1}(h) \in U$. Then
\begin{align*}
	& \| a(\chi^{-1}(h)) \v - \v - h_1 \mathbf{S_1}\v + h_2 \v_x \|_{X_\eta} \\
 &  \le |R_{-h_1} \rho - \rho + h_1 S_1 \rho| + \| R_{-h_1} (v(\cdot - h_2) - \rho \hat{v}) - (v - \rho \hat{v}) + h_1 S_1 (v- \rho \hat{v}) + h_2 v_x \|_{L_\eta^2}.
\end{align*}
Since $\partial_\theta R_\theta \rho_{|\theta = 0} = S_1 \rho$, the first term is $o(|h|)$. The second term is less obvious. We frequently add zero and split  into serveral terms
\begin{align*}
& \| R_{-h_1} (v(\cdot - h_2) - \rho \hat{v}) - (v - \rho \hat{v}) + h_1 S_1 (v- \rho \hat{v}) + h_2 v_x \|_{L_\eta^2} \\
	& \le \| R_{-h_1} (v-\rho \hat{v})(\cdot - h_2) - (v - \rho \hat{v})(\cdot - h_2) + h_1 S_1 (v- \rho \hat{v})(\cdot - h_2)\|_{L_\eta^2} \\
	& \quad + \| R_{-h_1} \rho \hat{v}(\cdot - h_2) - R_{-h_1} \rho \hat{v} +(v-\rho \hat{v})(\cdot - h_2) - h_1S_1(v-\rho \hat{v})(\cdot - h_2) \\
	& \quad \quad  - (v-\rho \hat{v}) + h_1S_1 (v-\rho \hat{v}) + h_2 v_x \|_{L_\eta^2} \\
	& \le T_1 + \| (v-\rho \hat{v})(\cdot - h_2) - (v-\rho \hat{v}) + h_2(v_x - \rho \hat{v}_x) \|_{L_\eta^2} \\
	& \quad + \| h_2 \rho \hat{v}_x + R_{-h_1} \rho \hat{v}(\cdot - h_2) - R_{-h_1} \rho \hat{v} - h_1 S_1 (v-\rho \hat{v})(\cdot - h_2) + h_1S_1(v- \rho \hat{v}) \|_{L_\eta^2} \\
	& \le T_1 + T_2 + \| R_{-h_1} [ \rho \hat{v} (\cdot - h_2) - \rho \hat{v} + h_2 \rho \hat{v}_x] \|_{L_\eta^2} \\
	& \quad + \| h_2 \rho \hat{v}_x - h_2 R_{-h_1} \rho \hat{v}_x - h_1S_1 (v-\rho \hat{v})(\cdot - h_2) + h_1S_1(v- \rho \hat{v}) \|_{L_\eta^2} \\
	& \le T_1 + T_2 + T_3 + \| -h_1 S_1 (v- \rho \hat{v})(\cdot - h_2) + h_1 S_1(v- \rho \hat{v}) - h_2(h_1S_1 v_x - h_1 S_1 \rho \hat{v}_x) \|_{L_\eta^2} \\
	& \quad + \| h_2 \rho \hat{v}_x - h_2 R_{-h_1} \rho \hat{v}_x + h_2h_1S_1 v_x - h_2 h_1 S_1 \rho \hat{v}_x \|_{L_\eta^2} \\
	& \le T_1 + T_2 + T_3 + T_4 + \| R_{-h_1} h_2 \rho \hat{v}_x - h_2 \rho \hat{v}_x + h_1 S_1 h_2 \rho \hat{v}_x \|_{L_\eta^2} + \| h_2h_1 S_1 v_x\|_{L^2_\eta} \\
	& = T_1 + T_2 + T_3 + T_4 + T_5 + T_6.
\end{align*}
Now $T_1, T_5 = o(|h|)$ holds since rotations are smooth and $\partial_\theta R_\theta \rho_{|\theta = 0} = S_1 \rho$. Further, $T_6 = o(|h|)$ is obvious. 
Finally $T_2, T_3, T_4 = o(|h|)$ hold, since translations on $H^1_\eta$ are smooth and therefore $\| v(\cdot - \tau) - v + h v_x \| = o(|h|)$ for $v \in H^1_\eta$. This completes the proof.
\end{proof}

\section*{Acknowledgment} Both authors thank the CRC 1283  
 `Taming uncertainty and
profiting from randomness and low regularity in analysis, stochastics and their applications’ at Bielefeld University for support during preparation of
this paper and of the thesis \cite{Doeding}. 

\end{document}